\numberwithin{equation}{section}
 \newtheorem{definition}{Definition} [section]       
 \newtheorem{remark}[definition]{Remark}       
 \newtheorem{example}[definition]{Example}
 \newtheorem{proposition}[definition]{Proposition}       
 \newtheorem{theorem}[definition]{Theorem}       
 \newtheorem{corollary}[definition]{Corollary}       
  \newtheorem{lemma}[definition]{Lemma}   
\newtheorem{openproblem}[definition]{Open Problem}
\renewcommand{\Im}{\mathrm{Im}} 
\newcommand{\Aut}{\mathrm{Aut}}
\newcommand{\Ker}{\mathrm{Ker}}
\newcommand{\Sym}{\mathrm{Sym}}
\renewcommand{\dim}{\mathrm{dim}}
\newcommand{\Ind}{\mathrm{Ind}}
\newcommand{\Id}{\mathrm{Id}}
\newcommand{\Res}{\mathrm{Res}}
\newcommand{\Stab}{\mathrm{Stab}}
\newcommand{\Hom}{\mathrm{Hom}}
\newcommand{\End}{\mathrm{End}}
\newcommand{\Tr}{\mathrm{Tr}}
\newcommand{\rk}{\mathrm{rk}}
\DeclareMathOperator{\GL}{GL}
\begin{document} 

\title{Homogeneous spaces of semidirect products and finite Gelfand pairs}
\author{Tullio Ceccherini-Silberstein}
\address{Dipartimento di Ingegneria, Universit\`a del Sannio, 82100 Benevento, Italy}
\address{Istituto Nazionale di Alta Matematica ``Francesco Severi'', 00185 Roma, Italy}
\email{tulliocs@sbai.uniroma1.it}

\author{Fabio Scarabotti}
\address{Dipartimento SBAI, Sapienza Universit\`a di Roma, 00161 Roma, Italy}
\address{Istituto Nazionale di Alta Matematica ``Francesco Severi'', 00185 Roma, Italy}
\email{fabio.scarabotti@sbai.uniroma1.it}

\author{Filippo Tolli}
\address{Dipartimento DIIEM, Universit\`a Roma Tre, 00146 Roma, Italy}
\address{Istituto Nazionale di Alta Matematica ``Francesco Severi'', 00185 Roma, Italy}
\email{filippo.tolli@uniroma3.it}

\subjclass[2020]{20C15, 20G40, 33C55,  43A90,  20E22}
\keywords{Multiplicity-free representation, Gelfand pair, $q$-analog of the nonbinary Johnson scheme, spherical function, $q$-Hahn polynomial, $q$-Krawtchouk polynomial} 

\begin{abstract} 
Let $K\leq H$ be two finite groups and let $C\leq A$ be two finite abelian groups, with
$H$ acting on $A$ as a group of isomorphisms admitting $C$ as a $K$-invariant subgroup. 
We study the homogeneous space $X\coloneqq\left(H\ltimes A\right)/\left(K\ltimes C\right)$ 
and determine the decomposition of the permutation representation of $H\ltimes A$ acting on $X$. 
We then characterize when this is multiplicity-free, that is, when $\left(H\ltimes A,K\ltimes C\right)$ is a Gelfand pair.
If this is the case, we explicitly calculate the corresponding spherical functions.
From our general construction and related analysis, we recover Dunkl's results on the 
$q$-analog of the nonbinary Johnson scheme.  
\end{abstract}
\maketitle 
\tableofcontents

\section{Introduction}

Finite Gelfand pairs and their spherical functions constitute a useful tool in many areas of pure and applied mathematics, such as Algebraic Combinatorics \cite{bannai1, bannai2}, the theory of Discrete Orthogonal Polynomials \cite{Du4, Stanton}, Probability and Statistics 
\cite{CST1,book, Dbook}, Number Theory \cite{Ter}, Finite Geometries and Design Theory \cite{Saxl},
and Symmetric Functions \cite{Macdonald}. Furthermore, Okounkov and Vershik \cite{OV} (see also \cite{book2}) 
used methods from the theory of finite Gelfand pairs in order to give a new approach to the representation 
theory of the symmetric groups.
Several fundamental examples of finite Gelfand pairs were constructed by means of suitable actions of linear groups over finite fields in a series of papers of Delsarte, Dunkl, and Stanton. A survey on these examples with the description of the associated spherical functions is \cite{Stanton}, while the point of view of association schemes is well exposed in the celebrated monograph 
\cite{bannai1}, now available also in the new expanded edition \cite{bannai2}.
An introduction to this subject is \cite[Chapter 8]{book}, while a further fundamental example, the finite upper half-plane, is 
exposed in Terras' monograph \cite[Chapter 19]{Ter}.
\par
Recall that if $G$ is a finite group and $K \leq G$ is a subgroup, then $(G,K)$ is a finite Gelfand pair when the permutation representation of $G$ acting on the homogeneous space $X\coloneqq G/K$ is multiplicity-free, that is, it decomposes into pairwise inequivalent irreducible sub-representations. Actually, in many cases of ``low rank'' Gelfand pairs (cf.\ \cite{bannai3, bannai4} for the definition of ``rank'' in the finite case) the following phenomenon occurs: the permutation representation of $K$ on each of its orbits in $X$ is multiplicity-free, and this was a key fact in Dunkl's derivation of addition formulas for orthogonal polynomials (see \cite{Du4}). In particular, this happens in the so-called $q$-Johnson scheme: here, we have $G=\GL(\mathbb{F}_q^n)$, the general linear group of $n \times n$ matrices with coefficients in a finite field with $q$ elements, 
$X$ is the Grassmannian variety of all $m$-dimensional subspaces of $\mathbb{F}_q^n$, $0\leq m\leq n/2$, and $K=\bigl(\GL(\mathbb{F}_q^m)\times\GL(\mathbb{F}_q^{n-m})\bigr)\ltimes\Hom(\mathbb{F}_q^{n-m},\mathbb{F}_q^m)$ is the stabilizer of an $m$-dimensional subspace. In \cite[Section 4,5]{DUNKL} Dunkl showed that the action of $K$ on each of its orbits gives rise to a multiplicity-free permutation representation, and computed the corresponding spherical functions (actually, he computed a more general set of invariant functions). 
He got this results by means of a formidable set of detailed computations involving actions on subspaces and on linear operators. 
During his visit in Rome in February 2023, Eiichi Bannai suggested us to look at these calculations: we then decided to develop a general theory in order to get a better understanding of the framework. The present paper is devoted to a presentation of the outcomes.  
\par
In Section \ref{Secprel}, we establish the basic notation. In Section \ref{Secgenconstr}, we describe a general construction that includes Dunkl's example. Here, we have two finite groups $K\leq H$, two finite abelian groups $C\leq A$ (written additively), with
$H$ acting on $A$ as a group of isomorphisms admitting $C$ as a $K$-invariant subgroup. We then study the homogeneous space 
$X\coloneqq\left(H\ltimes A\right)/\left(K\ltimes C\right)$. In Section \ref{Sectiondecperm} we obtain the decomposition of the permutation representation of $H\ltimes A$ acting on $X$ and we characterize, in a representation theoretical way, when it is multiplicity-free, that is, when $\left(H\ltimes A,K\ltimes C\right)$ is a Gelfand pair. The key tool is the Mackey-Wigner little group method, to which we have devoted an entire book \cite{book6}. Then, in Section \ref{SecIntBasis}, we introduce a suitable vector basis 
(called an intermediate basis) of the space of all $(K\ltimes C)$-invariant functions on $X$, made up of sums of characters of abelian groups. These correspond to the $q$-Krawtchouk polynomials in Dunkl's calculations. In Section \ref{Secsphfunc}, we compute the spherical functions under the assumption that 
$\left(H\ltimes A, K\ltimes C\right)$ is a Gelfand pair; one tricky point is to specify their values on the $(K\ltimes C$)-orbits. 
Section \ref{qJohstructure} is devoted to Dunkl's example and it is mainly an exposition, in the framework of the general construction presented in Section \ref{Secgenconstr}, of the homogeneous space studied by Dunkl in \cite[Section 4]{DUNKL}. 
We also adopt an approach that is both coordinate-free and characteristic-free: we use linear operators and direct sum decompositions in place of matrices and vector space bases (as in Dunkl's paper) and almost all results are formulated for vector spaces over an arbitrary field. In Section \ref{SectiondecpermqqJoh}, we apply the results in Sections \ref{Sectiondecperm}, \ref{SecIntBasis}, and 
\ref{Secsphfunc} to the homogeneous space described in Section \ref{qJohstructure} when the field is finite, and we compute the corresponding spherical representations and spherical functions, recovering Dunkl's results in a very different way. 
In particular, we show how the spherical representations can be described by means of induced representations and the Mackey-Wigner method.
In the final section, we collect some open problems related to possible further generalizations of our main construction. 
\par
Dunkl's example, that we analyze in Sections \ref{qJohstructure} and \ref{SectiondecpermqqJoh}, may be considered a $q$-analog of the nonbinary Johnson scheme. Indeed, the Johnson scheme coincides with the homogeneous space $S_n/(S_k\times S_{n-k})$, that is, 
the set of all $k$-subsets of $\{1,2,\dotsc,m\}$, while the nonbinary Johnson scheme (\cite{TAG}, \cite[Section 8.8]{book}) 
is the $S_m\wr S_n$-homogeneous space made up of all pairs $(A,f)$ where $A$ is a $k$-subset of $\{1,2,\dotsc, n\}$ and 
$f \colon A \to \{1,2,\dotsc, m\}$ is a map. Similarly, in Section \ref{qJohstructure}, we fix a direct sum decomposition 
$V\equiv \mathbb{F}_q^n=V_0\oplus V_1$, where $V_0$ is the $m$-dimensional subspace stabilized by $K$, 
and, for $0 \leq r \leq m$ and $0 \leq s \leq n-m$, the $K$-homogeneous space $X$ is made up of all triples 
$(U_0,U_1,\Psi)$, where $U_0$ is an $r$-dimensional subspace of $V_0$, 
$U_1$ is an $s$-dimensional subspace of $V_1$, and $\Psi\in\Hom(U_1,V_0/U_0)$.
These triples are used to parameterize the subspaces $U$ of $\mathbb{F}_q^n$ such that $\dim U=r+s$ and $\dim(U\cap V_0)=r$.
Moreover, for $r$ and $s$ fixed, the set of all such subspaces constitutes an orbit of $K$.
\par
These homogeneous spaces were also studied, from the point of view of the theory of association schemes, in several
papers: Wang, Guo, and Li in \cite{WGL} proved that these are symmetric association schemes
(this is essentially our Corollary \ref{lastCor}) and in \cite{WJL} they determined the parameters of this association scheme in the case $r=0$; Kurihara
in \cite{Kurihara}, again in the case $r=0$, following a method developed by Tarnanen, Aaltonen and Goethals in \cite{TAG} (which in turn depends on Dunkl's calculations for the nonbinary Johnson scheme in \cite{Du1}), computes the character table of the association scheme.  
\par
In our previous paper \cite[Section 6]{CST2}, we developed a far-reaching generalization of the nonbinary Johnson scheme, that includes, as particular cases, several previously studied examples of finite Gelfand pairs. Similarly, the results in Sections \ref{Secgenconstr}, 
\ref{Sectiondecperm}, \ref{SecIntBasis}, and \ref{Secsphfunc} of the present paper may be considered a further development of those in 
\cite[Section 5]{CST2}. The particular case $K=H$ and $C=\{0_A\}$ is also studied in the recent paper \cite[Section 4]{Ito}.
\par
In the present paper, the notation is often quite cumbersome but this is necessary because we have to take care, at almost every step of the proofs, of several actions or representations of groups or subgroups and of various operations on representations.

\section{Preliminaries and notation}\label{Secprel}
In this paper, all groups are \underline{finite} and all of their representations are unitary and finite dimensional.
\par
Let $G$ be a group. 
\par
We denote by $\iota_G$ the trivial representation of $G$. 
If $(\sigma,U)$ and $(\theta,W)$ are equivalent $G$-representations, we then write $\sigma\sim\theta$ and/or $U \simeq W$ 
($\sim$ for the representations, $\simeq$ for their spaces).
\par
For a finite set $X$, we denote by $L(X)$ the vector space of all complex valued functions on $X$.
Also, in order to avoid unnecessary heavy notation, given an action of $G$ on a set $X$ (e.g., a group representation), 
when the action is clear from the context, we shall often simply write $gx$ to denote the image of the point $x \in X$ 
under the group element $g \in G$.
If $G$ acts on $X$, the associated \emph{permutation representation} is the representation of $G$ on $L(X)$ defined by
setting $[gf](x)\coloneqq f(g^{-1}x)$, for all $x\in X$, $g\in G$, and $f\in L(X)$. When $X = G$, the above is the
\emph{left regular representation} of $G$.
Alternatively, fixing a point $x_0\in X$ and denoting by $K \coloneqq \{g \in G: gx_0 =x_0\}$ the $G$-stabilizer of $x_0$,
we may identify $X$ with the space $G/K$, of all left cosets of $K$ in $G$, and $L(X)$ with the vector space $L^K(G) \coloneqq
\{f \in L(G): f(gk) = f(g)$ for all $k \in K$ and $g \in G\}$ of all right $K$-invariant complex functions on $G$. 
Note that $L^K(G)$ is invariant under the left regular representation of $G$.
Explicitly, a natural equivalence $L(X)\simeq L^K(G)$ (thus, between the permutation representation and the restriction of the left regular representation to the vector subspace of all $K$-invariant functions) is given by the map
\begin{equation}\label{defftilde} 
L(X)\ni f\mapsto \overset{\triangle}{f}\in L(G/K),\qquad\text{ where }\qquad\overset{\triangle}{f}(g)\coloneqq f(gx_0), \ g \in G.
\end{equation}
\par
If $K\leq G$ is a subgroup and $(\sigma,V)$ is a (unitary) $K$-representation, then the associated
\emph{induced representation} is the $G$-representation $(\Ind_K^G\sigma, \Ind_K^G V)$ where
$\Ind_K^GV$ is the vector space made up of all functions $f\colon G\rightarrow V$ such that 
\begin{equation}\label{timessquarep2}
f(gk)=\sigma(k^{-1})f(g),
\end{equation}
for all $g\in G$ and $k\in K$, and $\Ind_K^G\sigma$ is given by
\[
\bigl[gf\bigr](h)\coloneqq f(g^{-1}h)
\] 
for all $g,h\in G$.

\begin{remark}\label{remconjindrep}{\rm
In the notation above, if we fix $g\in G$, then setting $\left[\,^g\sigma\right](s)\coloneqq \sigma(g^{-1}sg)$,
$s\in gKg^{-1}$, defines a representation $^g\sigma$ of the conjugate subgroup $gKg^{-1}$ on the same space $V$. 
Then $\Ind_{gKg^{-1}}^G(\,^g\sigma)$  is equivalent to $\Ind_K^G\sigma$. 
In order to get a concrete equivalence, define $\rho(g)\colon\Ind_K^G V\rightarrow\Ind_{gKg^{-1}}^G V$ 
by setting $\left[\rho(g)f\right](h)\coloneqq f(hg)$, for all $g,h\in G$ and $f\in\Ind_K^G V$. 
Indeed, for $h\in G$ and $s=gkg^{-1}\in gKg^{-1}$, we have
\[
[\rho(g)f](hs)=
f(g_0sg)=f(hgk)=\sigma(k^{-1})f(hg)=\left[\,^g\sigma\right](s^{-1})\left[\rho(g)f\right](h),
\]
so that $\rho(g)f$ really belongs to $\Ind_{gKg^{-1}}^G V$. It is obvious that setting
$\lambda\coloneqq\Ind_K^G\sigma$ and $\lambda'\coloneqq\Ind_{gKg^{-1}}^G\;^g\sigma$ we have $\lambda'(h)\rho(g)=\rho(g)\lambda(h)$, 
for all $h \in G$, so that $\rho(g)$ yields the desired equivalence.
}
\end{remark}

If $A$ is a (finite) abelian group then $\widehat{A}$ denotes its dual group, which is made up of all characters of $A$ 
(these are the group homomorphisms $\chi \colon A \to \mathbb{T}$ of $A$ into the circle group $\mathbb{T} = \{z \in \mathbb{C}: |z|=1\} \equiv \mathbb{R}/\mathbb{Z}$). If $\chi\in\widehat{A}$, then $\overline{\chi}$ denotes the conjugate of $\chi$. 
\par
If $X$ is a finite set then $\Sym(X)$ denotes the symmetric group on $X$, that is, the group of all permutations of $X$. 
\par
The symbols $\sqcup$ and $\bigsqcup$ will denote disjoint unions. 
\par
We will use the symbol $\cong$ to denote group isomorphisms and $\simeq$ to denote isomorphisms of vector spaces. 
\par
The symbol $\coloneqq$ will be systematically used to denote definitions, and $\equiv$ for obvious identities.
\par
If $V$ and $W$ are finite dimensional vector spaces, we denote by $\Hom(V,W)$ the vector space of all linear maps from $V$ to $W$. 
When $W = V$ we set $\End(V) \coloneqq \Hom(V,V)$. For $S\in\Hom(V,W)$ we denote by 
$\rk(S)\coloneqq\dim\Im(S)$ its rank and, for $0 \leq r \leq \dim(W)$  denote by 
$\Hom_r(V,W)\coloneqq\{S\in\Hom(V,W)\colon \rk(S)=r\}$ the subset of all linear maps of rank $r$.

\section{The homogeneous space}\label{Secgenconstr}

Let $H$ be a finite group and let $A$ be a finite abelian group (written additively). 
Suppose we are given an action $H$ on $A$ by automorphisms.
Thus we have $h(a+a')=ha+ha'$, $h(h'a)=(hh')a$, and $1_Ha=a$, for all $h,h'\in H$ and $a,a'\in A$. 
We can form the semidirect product $H\ltimes A$ by defining the product law
by setting (cf.\ \cite[Definition 1.3.12]{book3}, \cite[Definition 8.14.2]{book4}, and \cite[Definition 1.6]{book6}):
\begin{equation}\label{tensp2}
(h,a)(h',a')\coloneqq (hh',a+ha')
\end{equation}
for all $h,h'\in H$ and $a,a'\in A$. 
The inverse of an element $(h,a)$ is then given by
\begin{equation}\label{2tensp2}
(h,a)^{-1}=(h^{-1},-h^{-1}a). 
\end{equation}

Note also that the action of $H$ on $A$ becomes conjugation in $H\ltimes A$:
\begin{equation}\label{Hconjugation}
(h,0_A)(1_H,a)(h,0_A)^{-1}=(1_H,ha)
\end{equation}
for all $h \in H$ and $a \in A$.

Suppose now that $H$ acts transitively on a set $Y$. We shall then refer to $Y$ as to a $H$-homogeneous space. 
Let also $(B_y)_{y\in Y}$ be a family of pairwise isomorphic finite abelian groups (written additively) indexed by the finite set $Y$. 
For $h\in H$ and $y\in Y$, we fix a group isomorphism
\begin{equation}\label{ByBhy}
h\colon B_y \to B_{hy}.
\end{equation}
Thus, \eqref{ByBhy} is a bijection satisfying $h(b+b')=hb+hb'$ for all $b,b'\in B_y$. 
We also assume that $(hh')b=h(h'b)$ and $1_Hb=b$ for all $h,h'\in H$ and $b \in B_y$.
The final assumption is the following: for every $y\in Y$ we are given a surjective group homomorphism 
$\pi_y\colon A\rightarrow B_y$ such that
\begin{equation}\label{basequat}
h\pi_y=\pi_{hy}h \qquad \text{ for all } y\in Y \mbox{ and } h\in H.
\end{equation}
Note that the $h$ on the left (resp.\ right) denotes the map \ref{ByBhy} (resp.\ the action of $H$ on $A$).
In particular, if $y_0\in Y$ and $K \leq H$ denotes its stabilizer, then
\begin{equation}\label{basequatK}
k\pi_{y_0}=\pi_{y_0}k \qquad \text{ for all } k\in K,
\end{equation}
and $kb\in B_{y_0}$ for all $k\in K$ and $b\in B_{y_0}$.
\begin{proposition}\label{PropX}
Let $X\coloneqq \{(y,b)\colon y\in Y, b\in B_{y}\}$ and set
\begin{equation}
\label{squarp3}
(h,a)(y,b)\coloneqq (hy,\pi_{hy}(a)+hb)
\end{equation} 
for all $(h,a)\in H\ltimes A$ and $(y,b)\in X$. 
Then \eqref{squarp3} defines an action of $H\ltimes A$ on $X$. 
Moreover, if $y_0$ and $K$ are as in \eqref{basequatK} then $C\coloneqq\Ker(\pi_{y_0})$ is a $K$-invariant subgroup of $A$ and the stabilizer of $(y_0,0_{B_{y_0}})$ is $K\ltimes C$. Finally,
\begin{equation}\label{starrrp3}
\Ker(\pi_{hy_0})=hC \qquad\text{ for all } h\in H.
\end{equation}
\end{proposition}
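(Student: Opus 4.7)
The plan is to verify the four claims in order, the workhorse throughout being the intertwining relation \eqref{basequat} which links the $H$-action on $A$ with the $H$-action on $Y$ and on the $B_y$'s via the $\pi_y$'s.

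First, to check that \eqref{squarp3} defines an action, I would verify the identity and associativity axioms. The identity axiom is immediate from $\pi_{y}(0_A)=0_{B_{y}}$ (since $\pi_y$ is a homomorphism) and $1_H b = b$. For associativity, I would compute both $((h,a)(h',a'))(y,b)$ using \eqref{tensp2} and $(h,a)\bigl((h',a')(y,b)\bigr)$ using \eqref{squarp3} twice. In the first $Y$-component both give $hh'y$; in the second component the first expression yields
\[
\pi_{hh'y}(a) + \pi_{hh'y}(ha') + hh' b,
\]
while the second yields
\[
\pi_{hh'y}(a) + h\pi_{h'y}(a') + hh'b.
\]
Equality of the middle terms is exactly \eqref{basequat} applied at the point $h'y$, namely $\pi_{hh'y}\circ h = h\circ \pi_{h'y}$ on $A$. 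This is the only genuinely nontrivial verification in part one.

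Next, for the $K$-invariance of $C = \Ker(\pi_{y_0})$, I would take $c \in C$ and $k \in K$ and apply \eqref{basequatK} to get $\pi_{y_0}(kc) = k\pi_{y_0}(c) = k(0_{B_{y_0}}) = 0_{B_{y_0}}$, using that the map $k\colon B_{y_0}\to B_{y_0}$ is a group homomorphism. Hence $kc \in C$.

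For the stabilizer claim, an element $(h,a) \in H \ltimes A$ fixes $(y_0, 0_{B_{y_0}})$ precisely when $hy_0 = y_0$ and $\pi_{hy_0}(a) + h\cdot 0_{B_{y_0}} = 0_{B_{y_0}}$; the first condition gives $h\in K$, and then $hy_0 = y_0$ reduces the second to $\pi_{y_0}(a) = 0_{B_{y_0}}$, i.e.\ $a \in C$. Conversely, any $(k,c) \in K\ltimes C$ clearly fixes $(y_0,0_{B_{y_0}})$. Finally, for \eqref{starrrp3}, I would use \eqref{basequat} at $y_0$ in the form $\pi_{hy_0} \circ h = h \circ \pi_{y_0}$, where $h \colon A \to A$ and $h \colon B_{y_0} \to B_{hy_0}$ are both bijections. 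Consequently $\Ker(\pi_{hy_0}\circ h) = \Ker(h\circ \pi_{y_0}) = \Ker(\pi_{y_0}) = C$, and since $h\colon A \to A$ is an automorphism, $\Ker(\pi_{hy_0}) = hC$.

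There is no real obstacle here: the result is essentially a bookkeeping exercise, and the only point requiring care is the associativity check, where one must keep track of the two different meanings of ``$h$'' (the automorphism of $A$ versus the isomorphism $B_y \to B_{hy}$) and invoke \eqref{basequat} at the right point; once this is done, the remaining statements follow from elementary manipulations with kernels of homomorphisms.
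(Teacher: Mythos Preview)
Your proof is correct and follows essentially the same approach as the paper's own proof: both verify associativity by reducing the second-component comparison to the intertwining relation \eqref{basequat}, derive the $K$-invariance of $C$ from \eqref{basequatK}, compute the stabilizer directly, and obtain \eqref{starrrp3} from the identity $\pi_{hy_0}\circ h = h\circ \pi_{y_0}$ (equivalently $\pi_{hy_0}=h\pi_{y_0}h^{-1}$).
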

\begin{proof}
Let $(h,a), (h',a')\in H\ltimes A$ and $(y,b) \in X$. We have
\begin{align*}
(h,a)\left[(h',a')(y,b)\right]& = (h,a)(h'y, \pi_{h'y}(a') + h'b)&(\text{by }\eqref{squarp3})\\
& = (\left(h(h'y), \pi_{h(h'y)}(a) + h(\pi_{h'y}(a') + h'b)\right)&(\text{by }\eqref{squarp3})\\
& = \left(hh'y),\pi_{hh'y}(a)+h\pi_{h'y}(a')+hh'b\right)\\
& = \left((hh')y,\pi_{(hh')y}(a+ha')+(hh')b\right)&(\text{by }\eqref{basequat})\\
& = (hh', a+ha')(y,b)&(\text{by }\eqref{squarp3})\\
& = \left[(h,a)(h',a')\right](y,b)&(\text{by }\eqref{tensp2}).\\
\end{align*}
Moreover, $(1_H,0_A)(y,b)=(y,b)$. The $K$-invariance of $C$ follows from \eqref{basequatK}. 
Also, $(h,a)(y_0,0_{B_{y_0}})\equiv(hy_0,\pi_{hy_0}(a))$ is equal to $(y_0,0_{B_{y_0}})$ if and only if $h\in K$ and $a\in C$. 
Finally, from \eqref{basequat} it follows that $\pi_{hy_0}=h\pi_{y_0}h^{-1}$, and this yields \eqref{starrrp3},
\end{proof}

In other words, $X$ corresponds to the homogeneous space $\left(H\ltimes A\right)/\left(K\ltimes C\right)$.\\

\begin{remark}\label{Reminvconst}
{\rm
The above construction can be reversed as follows.
Suppose that $H,A,Y,y_0,K$ are as above and that $C\leq A$ is a $K$-invariant subgroup of $A$.
For $y \in Y$, let $h \in H$ such that $hy_0=y$ and set $B_y\coloneqq A/(hC)$ (cf.\ \eqref{starrrp3}). 
Then define $h'\colon B_y\rightarrow B_{h'y}$ by setting $h'(a+hC)\coloneqq h'a+h'hC$, for all $h'\in H$ and $a \in A$, 
and $\pi_y\colon A\rightarrow B_y$ by setting $\pi_y(a)\coloneqq a+hC$, for all $a\in A$. 
It is then easy to check that we get a system with all the above properties.}
\end{remark}

Let $Z$ be a complete set of representatives for the orbits of $K$ on $Y$ (so that $y_0\in Z$). Let $z \in Z$. Then the set 
$\sqcup_{y \in Kz}B_y$ is $K$-invariant (with respect to \eqref{ByBhy}). Moreover, if $K_z$ denotes the stabilizer of $z$ in $K$, 
then $B_z$ is $K_z$-invariant: we denote by $\Psi_z$ a complete set of representatives for the orbits of $K_z$ on $B_z$.
It the follows that $\{(z,d)\colon z\in Z,d\in \Psi_z\}$ constitutes a set of representatives for the orbits of $K$ on $X$. 
Indeed, for all $y\in Kz$ and $b\in B_y$ there exists $k\in K$ such that $kz=y$ and therefore \eqref{squarp3} yields 
$(k^{-1},0_A)(y,b)=(z,k^{-1}b)$, so that $(y,b)$ is in the $K$-orbit of $(z,k^{-1}b)$ with $k^{-1}b\in B_z$. 
If $d \in \Psi_z$ is the representative of the $K_z$-orbit of the element $k^{-1}b\in B_z$, we can find $k_z \in K_z$
such that $k_zd = k^{-1}b$. Setting $k' \coloneqq kk_z \in K$, we then have $(k',0_A)(z,d) = (y,b)$.
\par
By virtue of \eqref{basequat}, $B_z$ is also $K_z\ltimes C$-invariant, and we may thus select a complete set 
$\Psi_z^0 \subset \Psi_z$ of representatives for the $K_z\ltimes C$-orbits on $B_z$. 
This way, we have that  
\begin{equation}\label{defGamma0}
\Gamma_0\coloneqq\left\{(z,d)\colon z\in Z, d\in \Psi^0_z\right\}
\end{equation}
is a complete set of representatives for the orbits of $K\ltimes C$ on $X$. Clearly, the action of $C$ on $B=B_{y_0}$ is trivial and therefore $\Psi_{y_0}\equiv\Psi_{y_0}^0$. Moreover, $(y_0,0_{B_{y_0}})\in \Gamma_0$. Finally, by \cite[Proposition 10.4.12]{book4}, 
\[
X\times X=\bigsqcup_{(z,d)\in\Gamma_0}\bigl\{\bigl(\left(hz,\pi_{hz}(a)+hd\right),\left(hy_0,\pi_{hy_0}(a)\right)\bigr)\colon h\in H,a\in A\bigr\}
\]
is the decomposition of $X\times X$ into the $H\ltimes A$-diagonal orbits. 

From \eqref{basequat} it follows that $\pi_z(C)$ is a $K_z$-invariant subgroup of $B_z$ and therefore we may consider the natural action of $K_z$ on $B_z/\pi_z(C)$ defined by setting $k(b+\pi_z(C))\coloneqq kb+\pi_z(C)$, for all $k\in K_z$ and $b\in B_z$. We denote by $K_{z,b}$ the stabilizer of $b+\pi_z(C)$ in $K_z$, that is, $K_{z,b}\coloneqq\{k\in K_z\colon kb\in b+\pi_z(C)\}$. 
\begin{lemma}
The cardinality of the $(K\ltimes C)$-orbit containing $(z,d)\in \Gamma_0$ is given by
\begin{equation}\label{defgammazd}
\frac{\lvert K\rvert\cdot\lvert C\rvert}{\left\lvert K_{z,d}\right\rvert\cdot \lvert hC\cap C\rvert }
\end{equation}
where $h \in H$ is such that $hy_0=z$.
\end{lemma}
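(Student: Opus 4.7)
The plan is a direct application of the orbit--stabilizer theorem. I would compute the stabilizer of $(z,d)$ in $K\ltimes C$ by unpacking the action formula \eqref{squarp3}, then divide $|K\ltimes C| = |K|\cdot|C|$ by the stabilizer's cardinality.

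First, I fix $h \in H$ with $hy_0 = z$, so that \eqref{starrrp3} gives $\Ker(\pi_z) = hC$. For an element $(k,c) \in K\ltimes C$, the formula \eqref{squarp3} yields
\[
(k,c)(z,d) = \left(kz,\, \pi_{kz}(c) + kd\right).
\]
Thus $(k,c)$ stabilizes $(z,d)$ if and only if $kz = z$, i.e.\ $k \in K_z$, and $\pi_z(c) + kd = d$, i.e.\ $kd - d = -\pi_z(c) \in \pi_z(C)$. The second condition says exactly that $kd + \pi_z(C) = d + \pi_z(C)$ in $B_z/\pi_z(C)$, namely $k \in K_{z,d}$.

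Next, for each fixed $k \in K_{z,d}$ I would count the number of $c \in C$ satisfying $\pi_z(c) = d - kd$. The restriction $\pi_z\vert_C \colon C \to \pi_z(C)$ is surjective by definition of $\pi_z(C)$, and its kernel is $C \cap \Ker(\pi_z) = C \cap hC$ by \eqref{starrrp3}. Since $k \in K_{z,d}$ guarantees that $d - kd$ lies in the image $\pi_z(C)$, the fiber $(\pi_z\vert_C)^{-1}(d - kd)$ is non-empty and hence is a coset of $\Ker(\pi_z\vert_C)$, so its cardinality equals $|C \cap hC|$.

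Combining these two counts gives
\[
\left\lvert \Stab_{K\ltimes C}(z,d)\right\rvert = \left\lvert K_{z,d}\right\rvert \cdot \left\lvert C \cap hC\right\rvert,
\]
and the orbit--stabilizer theorem yields the claimed formula \eqref{defgammazd}. The argument is essentially routine bookkeeping; the only delicate point is noticing that the relevant kernel must be identified via \eqref{starrrp3} (which forces a choice of $h$ with $hy_0 = z$), and verifying that this makes the answer independent of the choice of $h$—which is immediate, since replacing $h$ by $hk'$ with $k' \in K$ changes $hC$ to $hk'C$, and $K$-invariance of $C$ gives $k'C = C$, hence $hk'C = hC$.
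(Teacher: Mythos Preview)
Your proof is correct and follows essentially the same approach as the paper's: both apply the orbit--stabilizer theorem, unpack the condition $(k,c)(z,d)=(z,d)$ via \eqref{squarp3} to obtain $k\in K_{z,d}$ and $\pi_z(c)=d-kd$, and then count the admissible $c$'s for each such $k$ as a coset of $C\cap\Ker\pi_z = C\cap hC$. Your additional remark verifying independence of the choice of $h$ is a nice touch that the paper leaves implicit.
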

\begin{proof}
From \eqref{squarp3} we deduce that, for $(z,d)\in \Gamma_0$ and $(k,c)\in K\ltimes C$, we have
\[
\begin{split}
(k,c)(z,d)=(z,d)\quad&\Longleftrightarrow\quad kz=z\; \text{ and }\;\pi_z(c)+kd=d,\\
&\Longleftrightarrow\quad k\in K_{z,d}\; \text{ and }\;\pi_z(c)=d-kd,\\
\end{split}
\]
so that the stabilizer of $(z,d)$ in $K\ltimes C$ is equal to 
\[
(K\ltimes C)_{(z,d)}\coloneqq \{(k,c)\in K_{z,d}\ltimes C\colon \pi_z(c)=d-kd\}.
\] 
Note also that if $(k,c')\in(K\ltimes C)_{(z,d)}$ and $c\in C$ then $(k,c)$ is in $(K\ltimes C)_{(z,d)}$ if and only if $c-c'\in\Ker\pi_z\cap C=hC\cap C$, with $h\in H$ such that $hy_0=z$ (cf.\ \eqref{starrrp3}). In other words, for each $k\in K_{z,d}$ there exist $\lvert C\cap hC\rvert$ elements $c\in C$ such that $(k,c)\in (K\ltimes C)_{(z,d)}$ and therefore the cardinality $\frac{\lvert K\ltimes C\rvert}{\left\lvert (K\ltimes C)_{(z,d)}\right\rvert }$ of the $(K\ltimes C)$-orbit containing $(z,d)\in \Gamma_0$ is equal to
\eqref{defgammazd}.
\end{proof}

\begin{example}
{\rm
We now examine some particular cases of our construction.
\begin{enumerate}
\item
If $K=\{1_H\}$ (so that $Y = H$), then every $C\leq A$ is $K$-invariant but, in general, it is not normal in $H\ltimes A$ (cf.\ \eqref{Hconjugation}). Moreover, $B_h=A/hC$, for all $h\in H$, and $X=\{(h,b)\colon h\in H, b\in B_h\}$.
\item
If $K=H$ (so that $Y=\{y_0\}$) and $C \leq A$ is $H$-invariant, then by \eqref{Hconjugation} it is actually normal in $H\ltimes A$. Moreover, $X\equiv B=A/C$ with the action $(h,a)b=hb+\pi(a)$, where $\pi\colon A\rightarrow B$ is the quotient homomorphism.
\item
If $C=A$ and $K$ is arbitrary, then $X\equiv Y$ as an $H\ltimes A$-homogeneous space, with $A$ acting trivially.
\item
If $C=\{0_A\}$ and $K$ is arbitrary, then $X= Y\times A$ with the action of $H \ltimes A$ given by 
$(h,a)(y,a')=(hy,a+ha')$, for $a,a'\in A$, $y\in Y$, and $h\in H$.
\end{enumerate}
}
\end{example}

\begin{remark}
{\rm
Our target is to study the permutation representation of $H\ltimes A$ on $L(X)$. 
This is quite a hard problem which includes the following important cases. 
(i) For $K=\{1_H\}$ and $C=\{0_A\}$ (so that $X = H \ltimes A$) it corresponds with the whole representation theory of $H\ltimes A$, which is given by the little group method (we shall recall it in Section \ref{Sectiondecperm}). 
(ii) For $A=C=\{0_A\}$ it corresponds to the study of the permutation representation of $H$ on $L(H/K)$. 
In the general case, we need to develop a sort of interpolation between cases (i) and (ii).}
\end{remark}

For future reference, we explicitly express the representation of $H\ltimes A$ on $L(X)$: from \eqref{2tensp2}, \eqref{squarp3}, and \eqref{basequat} it follws that, for $f\in L(X)$, $(y,b)\in X$, and $(h,a)\in H\ltimes A$, then
\begin{equation}\label{genpermrep}
[(h,a)f](y,b)= f\left(h^{-1}y,-h^{-1}\pi_y(a)+h^{-1}b\right).
\end{equation}

Recall that $\left(\left(H\ltimes A\right),\left(K\ltimes C\right)\right)$ is a symmetric Gelfand pair (cf.\ \cite[Section 4.3]{book}) 
if for all choices of $h\in H$ and $a\in A$ there exist $k_1,k_2\in K$ and $c_1,c_2\in C$ such that
\[
(h^{-1},-h^{-1}a)\equiv (h,a)^{-1}=(k_1,c_1)(h,a)(k_2,c_2)\equiv(k_1hk_2,c_1+k_1a+k_1hc_2)
\]
that is
\begin{equation}\label{symmGel}
\begin{cases}
h^{-1}=k_1hk_2\\
-h^{-1}a=k_1hc_2+k_1a+c_1.
\end{cases}
\end{equation}
Note that the first condition is satisfied if and only of $(H,K)$ is a symmetric Gelfand pair. 
\par
Alternatively (cf.\ \cite[Lemma 4.3.4]{book}), 
$(H \ltimes A,K \ltimes C)$ is a symmetric Gelfand pair if and
only if for all $(y,b), (y',b')\in Y$  on has that $((y,b), (y',b'))$ and $((y',b'), (y,b))$ belong to the same orbit under the diagonal action of $H\ltimes A$ on $X\times X$, that is, if and only if there exist $h\in H$ and $a\in A$ 
such that $(h,a)(y,b)=(y',b')$ and $(h,a)(y',b')=(y,b)$, i.e.,
\begin{equation}\label{symmGel2}
\begin{cases}
hy=y' \mbox{ and } \pi_{hy}(a)+hb=b'\\
hy'=y \mbox{ and } \pi_{hy'}(a')+hb'=b.\\
\end{cases}
\end{equation}

We now present two examples, that will be further discussed in Section \ref{Sectiondecperm}, in order to clarify some tricky points connected with the action of $H$ on the dual of $A$.

\begin{example}\label{Esempiowreath}{\rm
Let $H$ be a group acting transitively on a finite set $Z$ and that $D$ be a finite abelian group. 
Then the corresponding \emph{wreath product} is the group $D\wr H$ made up of all pairs $(h,f)$, where $h\in H$ and $f \in D^Z$, with the product law given by $(h,f)(h',f')\coloneqq (hh',f+hf')$, where $[hf'](z)\coloneqq f'(h^{-1}z)$, for all $h,h' \in H$, 
$f,f' \in D^Z$, and $z \in Z$. It is the semidirect product of $H$ with $A\coloneqq D^Z$ (cf.\ \cite[Section 2.1]{book3}). 
We then suppose that $K$ is a subgroup of $H$ whose action on $Z$ is not transitive. We then denote by 
$Z=\bigsqcup_{j=1}^\ell Z_j$ the decomposition of $Z$ into $K$-orbits (with $\ell\geq 2$).

\begin{enumerate}
\item\label{Esempiowreath0} We first take $C$ as the subgroup of $D^Z$ consisting of all $f \in D^Z$ such that 
$\sum_{z\in Z_j} f(z)=0_D,$ for all $j=1,2,\dotsc,\ell$, that is, such that the sum of the values of $f$ over each $K$-orbit is zero. 
Then $C$ is clearly $K$-invariant.
\par
In general, $(H\ltimes A,K\ltimes C)$ is not a symmetric Gelfand pair: one can easily generalize \cite[Example 4.8.3]{book}. 
For instance, take $Z_1 \coloneqq\{1\}$, $Z_2 \coloneqq\{2,3\}$, $Z \coloneqq Z_1\sqcup Z_2$, $H=\Sym(Z)$, $K=\Sym(Z_1)\times\Sym(Z_2)$, 
and $D=\mathbb{Z}_n$, the cyclic group of order $n\geq 3$. If $h \coloneqq (1 \, 2)$ (transposition) then in \eqref{symmGel} 
we must take $k_1=k_2=1_H$ and if $(d_1,d_2,d_3)\in A = D^Z$, $c_1 =(0,d',-d'), c_2=(0,d'',-d'')\in C$, 
then the second equation in \eqref{symmGel} becomes
\[
(-d_2,-d_1,-d_3)=(d''+d_1,d_2+d',-d''+d_3-d') \iff \left\{\begin{array}{l}
d''=-d_1-d_2\\
d'=-d_1-d_2\\
d'+d''=2d_3\\
\end{array}
\right.
\]
which has no solutions if $2(d_1+d_2+d_3)\neq 0$. 
\par
On the other hand, in Example \ref{Esempiowreath2} we will prove that this specific case is, anyway, a Gelfand pair.

\item\label{Esempiowreath01}
We now take $C$ as the subgroup of $D^Z$ consisting of all $f \in D^Z$ which are constant on the orbits of $K$. 
As in the previous case, $C$ is clearly $K$-invariant.
\end{enumerate}}
\end{example}

\section{A decomposition of the permutation representation}\label{Sectiondecperm}

We begin by recalling the Mackey-Wigner little group method for finite semidirect products with an abelian normal subgroup, that is, for groups of the form $H\ltimes A$ as in Section \ref{Secgenconstr}. We refer to our monographs \cite[Definition 1.3.11]{book3}, where this
powerful tool is deduced, even in a more general form, from Clifford theory; \cite[Theorem 11 6.2]{book4}, where it is deduced from Mackey theory (see also \cite[Chapter 5]{Simon} for a similar approach); and to the whole of \cite{book6}, which contains several generalizations due to Schur and Mackey among others. 

Let $H$ be a group acting by automorphisms of an abelian group $A$. The action induces an action of $H$ on the dual group $\widehat{A}$: 
if $\psi$ is a character of $A$ and $h\in H$, then the $h$-\emph{conjugate} (or translate) of $\psi$ is the character $^h\psi$
given by setting (cf.\ \eqref{Hconjugation}):
\begin{equation}
\label{defhtransl}
\!^h\psi(a)\coloneqq\psi(h^{-1}a), \qquad\text{for all }a\in A.
\end{equation}

Let $\Pi$ be a set of representatives for the orbits of $H$ on $\widehat{A}$ and, for each $\psi\in\Pi$, denote by
$H_\psi\coloneqq\{h\in H\colon \,^h\psi=\psi\}$ the $H$-stabilizer of $\psi$. Then the group $H_\psi\ltimes A$ is called the \emph{inertia group} of $\psi$ and this character has an extension $\widetilde{\psi}$ to $H_\psi\ltimes A$ given by setting 
$\widetilde{\psi}(h,a)\coloneqq\psi(a)$, for all $(h,a)\in H_\psi\ltimes A$. If $\sigma$ is an irreducible representation of $H_\psi$, we denote by $\overset{\triangle}{\sigma}$ its \emph{inflation} on $H_\psi\ltimes A$, that is, the representation given by setting
$\overset{\triangle}{\sigma}(h,a)\coloneqq \sigma(h)$, for all $(h,a)\in H_\psi\ltimes A$. 
We then have:
\begin{equation}\label{litgroupmeth}
\widehat{H\ltimes A}=\left\{\Ind_{H_\psi\ltimes A}^{H\ltimes A}\left(\widetilde{\psi}\otimes\overset{\triangle}{\sigma}\right)\colon\psi\in\Pi, \sigma\in\widehat{H_\psi}\right\}.
\end{equation}
That is, \eqref{litgroupmeth} gives a list of all irreducible representations of $H\ltimes A$ and, for distinct choices of $\psi$ and $\sigma$, we get inequivalent representations. Note that, in the present case, the tensor product $\widetilde{\psi}\otimes\overset{\triangle}{\sigma}$ is just a product of an operator-valued representation by a scalar-valued one, that is, $\widetilde{\psi}\otimes\overset{\triangle}{\sigma}\equiv\widetilde{\psi}\cdot\overset{\triangle}{\sigma}$; however, we prefer to maintain the tensor product notation in order to distinguish this significant case from other occurrences of products by scalar-valued representations.

In the sequel, we shall need to deal with the irreducible representations of groups of the form $H\ltimes A$, so that we now describe them explicitly.

\begin{proposition}\label{Propindrep}
With the above notation, let $U$ be the representation space of $\sigma$. 
For every $f\in \Ind_{H_\psi}^HU$ define $F_f\colon H\ltimes A\rightarrow U$ by setting
\begin{equation}\label{indrephatris}
F_f(h,a)\coloneqq\!^h\overline{\psi}(a)f(h),\qquad \text{for all }a\in A \mbox{ and } h\in H.
\end{equation}
Then the space $W$ of $\Ind_{H_\psi\ltimes A}^{H\ltimes A}\left(\widetilde{\psi}\otimes\overset{\triangle}{\sigma}\right)$ is made up of all such functions $F_f$. Moreover, defining $\psi^\flat(a) \in L(H)$ by setting 
\begin{equation}
\label{defpsiflat}
\left[\psi^\flat(a)\right](h)\coloneqq \psi(h^{-1}a)\equiv \!^h\psi(a),
\end{equation}
for all $h\in H$ and $a\in A$, then the pointwise product $\psi^\flat(a)f$ belongs to $\Ind_{H_\psi}^HU$, for all $f\in\Ind_{H_\psi}^HU$, 
and the action of $H\ltimes A$ is given by
\begin{equation}\label{indrephaaction}
(h,a)F_f=F_{\psi^\flat(a)\left[hf\right]}
\end{equation}
for all $h \in H$, $a \in A$, and $f \in \Ind_{H_\psi}^HU$.
\par
Finally, if $f_1,f_2\in\Ind_{H_\psi}^HU$, then
\begin{equation}\label{scalprodf1f2}
\left\langle F_{f_1},F_{f_2}\right\rangle_W=\langle f_1,f_2\rangle_{\Ind_{H_\psi}^HU},
\end{equation}
that is, the map $f\mapsto F_f$ is an isometry.
\end{proposition}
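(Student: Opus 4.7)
The plan is to verify the four assertions in turn, each reducing to a routine calculation in the semidirect product once one uses the single key fact that $\psi$ is invariant under $H_\psi$, i.e.\ $\psi((h')^{-1}a) = \psi(a)$ for all $h' \in H_\psi$ and $a \in A$.

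\textbf{Step 1: $F_f \in W$ for every $f \in \Ind_{H_\psi}^H U$.} By the defining property \eqref{timessquarep2} of $\Ind_{H_\psi \ltimes A}^{H \ltimes A}\bigl(\widetilde{\psi}\otimes\overset{\triangle}{\sigma}\bigr)$, I need
\[
F_f\bigl((h,a)(h',a')\bigr)=\widetilde{\psi}\bigl((h',a')^{-1}\bigr)\,\sigma\bigl((h')^{-1}\bigr)\,F_f(h,a)
\]
for all $(h',a')\in H_\psi\ltimes A$. I would expand the left-hand side using \eqref{tensp2}, then the right-hand side using \eqref{2tensp2} and the definition of $\widetilde\psi$, and the property $f(hh')=\sigma((h')^{-1})f(h)$ coming from $f\in\Ind_{H_\psi}^HU$. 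The identity then collapses to $\psi(-(h')^{-1}h^{-1}a)=\psi(-h^{-1}a)$, which is exactly the $H_\psi$-invariance of $\psi$.

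\textbf{Step 2: every element of $W$ is some $F_f$.} Given $F\in W$, set $f(h)\coloneqq F(h,0_A)$. Applying \eqref{timessquarep2} to the product $(h,0_A)(h',0_A)=(hh',0_A)$ with $h'\in H_\psi$ shows $f\in\Ind_{H_\psi}^HU$. For general $a\in A$, observe $(h,a)=(h,0_A)(1_H,h^{-1}a)$ and $(1_H,h^{-1}a)\in H_\psi\ltimes A$, so \eqref{timessquarep2} yields $F(h,a)=\psi(-h^{-1}a)f(h)=F_f(h,a)$.

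\textbf{Step 3: the action formula.} First I would check $\psi^\flat(a)f\in\Ind_{H_\psi}^HU$: from \eqref{defpsiflat} and the $H_\psi$-invariance of $\psi$,
\[
[\psi^\flat(a)f](hh')=\psi((h')^{-1}h^{-1}a)\,\sigma((h')^{-1})f(h)=\sigma((h')^{-1})[\psi^\flat(a)f](h)
\]
for $h'\in H_\psi$. Then I would compute both sides of \eqref{indrephaaction} at an arbitrary $(h',a')$. The left side, using the induction action and \eqref{2tensp2}, equals $F_{f'}(h^{-1}h',h^{-1}(a'-a))=\psi(-(h')^{-1}(a'-a))f'(h^{-1}h')$. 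The right side expands, via \eqref{indrephatris} and \eqref{defpsiflat}, to $\psi(-(h')^{-1}a')\,\psi((h')^{-1}a)\,f'(h^{-1}h')$, which agrees by additivity of $\psi$.

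\textbf{Step 4: isometry.} Here I would use that the abelian normal factor $A$ is automatically contained in $H_\psi\ltimes A$, so a transversal of $H/H_\psi$ furnishes a transversal of $(H\ltimes A)/(H_\psi\ltimes A)$ via $h\mapsto (h,0_A)$. Since $F_f(h,0_A)=f(h)$, the natural inner product on $W$ as an induced representation (summed over coset representatives) reduces termwise to the inner product on $\Ind_{H_\psi}^HU$, proving \eqref{scalprodf1f2}.

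\textbf{Main obstacle.} None of the steps is deep: the whole argument is bookkeeping in the semidirect product. The only genuinely delicate point, recurring in Steps 1 and 3, is keeping track of the inverses produced by \eqref{2tensp2} and of the exact twist by $\psi$ arising from the factor ${}^h\overline{\psi}(a)$ in \eqref{indrephatris}; once organised, every required identity collapses to the $H_\psi$-invariance of $\psi$.
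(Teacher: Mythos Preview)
Your proof is correct and follows essentially the same route as the paper: both verify the induced-representation transformation law directly, recover $f$ from $F$ by evaluating at $(h,0_A)$, and compute the action by unwinding $(h,a)^{-1}(h',a')$. The only minor difference is in Step~4, where you lift a transversal of $H/H_\psi$ to $(H\ltimes A)/(H_\psi\ltimes A)$ via $h\mapsto(h,0_A)$, while the paper instead sums over all of $H\times A$ with the normalization $\frac{1}{|A|\cdot|H_\psi|}$ and uses $|\psi(h^{-1}a)|^2=1$ to collapse the $A$-sum; both are standard and immediately give \eqref{scalprodf1f2}.
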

\begin{proof}
The representation $\widetilde{\psi}\otimes\overset{\triangle}{\sigma}$ may be realized on the space $U$ by setting
\begin{equation}\label{chiotsig}
\left[\left(\widetilde{\psi}\otimes\overset{\triangle}{\sigma}\right)(h',a')\right]w=\psi(a')\sigma(h')w,
\end{equation}
for all $w\in U$, $h'\in H_\psi$, and $a'\in A$. Recall that $W$ is made up of all $F\colon H\ltimes A\rightarrow U$ such that
\[
\begin{split}
F((h,a)(h',a'))& = \left[\left(\widetilde{\psi}\otimes\overset{\triangle}{\sigma}\right)\left((h',a')^{-1}\right)\right]F(h,a)\\
(\text{by }\eqref{2tensp2}\text{ and }\eqref{timessquarep2})\quad& = \left[\left(\widetilde{\psi}\otimes\overset{\triangle}{\sigma}\right)\left(h'^{-1},-h'^{-1}a'\right)\right]F(h,a),
\end{split}
\] 
equivalently (cf.\  \eqref{tensp2}, \eqref{chiotsig}, and recalling that $^{h'}\!\psi=\psi$)
\begin{equation}\label{indrepha}
F(hh',a+ha')=\overline{\psi}(a')\sigma(h')^*F(h,a),
\end{equation}
for all $h\in H, h'\in H_\psi$ and $a,a'\in A$.

Note that, by setting $h'=1_H$, $a'=h^{-1}a_0$, and $a=0_A$ the above yields
\begin{equation}\label{indrephabis}
F(h,a_0)=\overline{\psi}(h^{-1}a_0)F(h,0_A)
\end{equation}
while, for $a=a'=0_A$, one obtains
\begin{equation}\label{indrephabis2}
F(hh',0_A)=\sigma(h')^*F(h,0_A).
\end{equation}

Conversely, it is easy to check that conditions \eqref{indrephabis} and \eqref{indrephabis2} imply \eqref{indrepha}, and that, moreover,
given $F \in W$ and defining $f \colon H \to U$ by setting $f(h) \coloneqq F(h, 0_A)$, one has that $f \in \Ind_{H_\psi}^HU$
and $F = F_f$. This shows that $W\equiv\left\{F_f\colon f\in\Ind_{H_\psi}^HU\right\}$.
We now note that 
\[
\left[\psi^\flat(a)\right](hh')=\psi(h'^{-1}h^{-1}a)=\psi(h^{-1}a)=\left[\psi^\flat(a)\right](h),
\]
for all $h\in H$, $h'\in H_\psi$, and $a\in A$, and this implies that $\psi^\flat(a)f\in\Ind_{H_\psi}^HU$ for all $f\in\Ind_{H_\psi}^HU$.
The action of $(h,a)\in H\ltimes A$ on $W$ is given by
\[
\begin{split}
[(h,a)F_f](h',a') & = F_f\left((h,a)^{-1}(h',a')\right)= F_f(h^{-1}h',-h^{-1}a+h^{-1}a')\\
(\text{by }\eqref{indrephatris}\text{ and }\eqref{defpsiflat} )\quad& = \;^{h'}\overline{\psi}(a') \;^{h'}\psi(a)\left[hf\right](h')=
F_{\psi^\flat(a)\left[hf\right]}(h',a'),
\end{split}
\]
for all $f \in \Ind_{H_\psi}^HU$, $h \in H$, $h'\in H_\psi$, and $a,a'\in A$. 
Finally, if $f_1,f_2\in\Ind_{H_\psi}^HU$ then (cf.\ \cite[(11.3)]{book4} and recalling that $\lvert\psi(a'')\rvert = 1$ for $a'' \in A$)
\[
\left\langle F_{f_1},F_{f_2}\right\rangle_W=
\frac{1}{\lvert A\rvert \cdot \lvert H_\psi \rvert}\sum_{a\in A,h\in H}\lvert\psi(h^{-1}a)\rvert^2\langle f_1(h),f_2(h)\rangle_U 
=\langle f_1,f_2\rangle_{\Ind_{H_\psi}^HU}.
\]
\end{proof}

\begin{remark}\label{remconjindrep2}{\rm
In the above notation, let $h\in H$ and suppose that $\,^h\psi\neq \psi$. Then $H_{\,^h\psi}=hH_\psi h^{-1}$ and for $h_0\in H, a\in A$, we have 
$(h,0_A)^{-1}(h_0,a)(h,0_A)=(h^{-1}h_0h,h^{-1}a)$
so that, by \eqref{chiotsig}, 
\[
\!^{(h,0_A)}\left(\widetilde{\psi}\otimes\overset{\triangle}{\sigma}\right)=\widetilde{\!^h\psi}\otimes\;\overset{\triangle}{\!^h\sigma},
\]
where $\,^h\sigma$ is as in Remark \ref{remconjindrep}. Since $(h,0_A)\left(H_\psi\ltimes A\right)(h,0_A)^{-1}=H_{\,^h\psi}\ltimes A$, from Remark \ref{remconjindrep} it follows that $\Ind_{H_\psi\ltimes A}^{H\ltimes A}\left(\widetilde{\psi}\otimes\overset{\triangle}{\sigma}\right)$ and $\Ind_{H_{\,^h\psi}\ltimes A}^{H\ltimes A}\left(\widetilde{\!^h\psi}\otimes\;\overset{\triangle}{\!^h\sigma}\right)$ are equivalent representations of $H\ltimes A$.
}
\end{remark}

We now use all the notation in Section \ref{Secgenconstr} but we set $B\coloneqq B_{y_0}\equiv A/C$ and $\pi\coloneqq\pi_{y_0}$. Consider the dual group $\widehat{B}$ and if $\theta\in\widehat{B}$ define its inflation to $A$ by setting:
\begin{equation}\label{definfl}
\overset{\triangle}{\theta}(a)\coloneqq \theta(\pi(a)),\qquad\text{ for all }a\in A.
\end{equation}
Then we set 
\begin{equation}\label{HKchi}
H_\theta\coloneqq\left\{h\in H\colon\;^h\overset{\triangle}{\theta}=\overset{\triangle}{\theta}\right\}\qquad\text{and}\qquad K_\theta\coloneqq H_\theta\cap K,
\end{equation} 
and we denote by $\widetilde{\theta}$ the extension of $\overset{\triangle}{\theta}$ to $H_\theta\ltimes A$, that is,
\begin{equation}\label{defchitilde}
\widetilde{\theta}(h,a)\coloneqq \overset{\triangle}{\theta}(a)\equiv \theta\left(\pi(a)\right),
\end{equation} 
for all $(h,a)\in H_\theta\ltimes A$. Finally, we denote by $\widetilde{\widetilde{\theta}} \coloneqq \Res_{K_\theta\ltimes A}^{H_\theta\ltimes A}\widetilde{\theta}$ the extension of $\overset{\triangle}{\theta}$ to $K_\theta\ltimes A$.

Next, consider the conjugacy action of $K$ on $\widehat{B}$ given by setting $\!^k\theta(b)\coloneqq\theta(k^{-1}b)$, 
for all $\theta\in\widehat{B}$, $k\in K$, and $b\in B$. Denote by $\Theta$ a complete set of representatives for the orbits of this action and note that, by \eqref{basequatK}, the stabilizer of $\theta\in\Theta$ coincides with $K_\theta$ in \eqref{HKchi}. We also consider the orbits of $H$ on $\widehat{A}$ that contain characters inflated from $B$. First of all, from \eqref{basequatK}
it follows that, for $k\in K$, one has $^k\overset{\triangle}{\theta}=\overset{\triangle}{\left({\!^k\theta}\right)}$, and in the sequel we shall simply write $\overset{\triangle}{\!^k\theta}$, so that
 the $K$-orbit $\{ ^k\theta\colon k\in K\}$ in $\widehat{B}$ corresponds to the $K$-orbit $\left\{ ^k\overset{\triangle}{\theta}\colon k\in K\right\}$ in $\widehat{A}$ which, in turn, is contained in the $H$-orbit $\left\{ ^h\overset{\triangle}{\theta}\colon h\in H\right\}$. In general, an $H$-orbit contains more than one of these $K$-orbits: consequently, we select a subset 
\begin{equation}\label{XiTheta}
\Xi\subseteq \Theta
\end{equation}
such that $\left\{\overset{\triangle}{\xi}\colon\xi\in\Xi\right\}$ constitutes a complete set of representatives for the $H$-orbits 
in $\widehat{A}$ that contain characters inflated from $B$. Finally, for $\xi\in \Xi$ we set 
\begin{equation}\label{starp7}
[\xi]\coloneqq\Biggl\{\theta\in\Theta\colon\left\{ ^k\overset{\triangle}{\theta}\colon k\in K\right\}\subseteq\Bigl\{ ^h\overset{\triangle}{\xi}\colon h\in H\Bigr\}\Biggr\}. 
\end{equation}

We now examine a particular case of Example \ref{Esempiowreath0} (resp.\ of Example \ref{Esempiowreath01}) which easily provides an
equality (resp.\ a strict inclusion) in \eqref{XiTheta}. In some sense, these Examples are dual one to each other.

\begin{example}\label{Esempiowreath1}{\rm
Assume all the notation in Example \ref{Esempiowreath}.\ref{Esempiowreath0} with $H=\Sym(Z)$, $\ell=2$, and $K=\Sym(Z_1)\times\Sym(Z_2)$. Then the map $\pi\colon A = D^Z\rightarrow A/C = D\times D$ is given by $\pi(f)= \left(\sum_{z\in Z_1}f(z),\sum_{z\in Z_2}f(z)\right)$, for all $f\in D^Z$ (cf. \cite[Corollary 4.12]{KNAPP}). In particular, if $k\in K$ then $k\pi(f)=\pi(f)$, that is, the action of $K$ on 
$B= A/C = D^Z/\Ker(\pi) = D\times D$ (and therefore also on $\widehat{B}$) is trivial. If $\theta_1,\theta_2\in\widehat{D}$ we set $\left[\theta_1\otimes\theta_2\right](d_1,d_2)\coloneqq\theta_1(d_1)\theta_2(d_2)$, for all $d_1,d_2\in D$. Therefore, for all $f \in A = D^Z$,
\[
\overset{\triangle}{\left[\theta_1\otimes\theta_2\right]}(f)= \theta_1\!\left(\sum_{z\in Z_1}f(z)\right)\cdot \theta_2\!\left(\sum_{z\in Z_2}f(z)\right)
\]
and moreover, if $h\in H$,
\begin{equation}\label{BigEq}
\begin{split}
\,^h\overset{\triangle}{\left[\theta_1\otimes\theta_2\right]}(f)& = \overset{\triangle}{\left[\theta_1\otimes\theta_2\right]}(h^{-1}f)= \theta_1\!\left(\sum_{z\in Z_1}f(hz)\right)\cdot \theta_2\!\left(\sum_{z\in Z_2}f(hz)\right)\\
& = \theta_1\!\left(\sum_{z\in Z_1\cap hZ_1}f(z)\right)\cdot\theta_1\!\left(\sum_{z\in Z_2\cap hZ_1}f(z)\right)\cdot\\
&\qquad\qquad\qquad\cdot\theta_2\!\left(\sum_{z\in Z_1\cap hZ_2}f(z)\right)\cdot\theta_2\!\left(\sum_{z\in Z_2\cap hZ_2}f(z)\right)\!.\\
\end{split}
\end{equation}

From \eqref{BigEq} it follows that if $\lvert Z_1\rvert=\lvert Z_2\rvert$, $h Z_1=Z_2$ (so that, necessarily $h Z_2=Z_1$), 
and $\theta_1\neq \theta_2$ then 
\[
^h\overset{\triangle}{\left[\theta_1\otimes\theta_2\right]}=\overset{\triangle}{\left[\theta_2\otimes\theta_1\right]}\neq\overset{\triangle}{\left[\theta_1\otimes\theta_2\right]}.
\]
This yields an example of an inflated character whose $H$-orbit contains at least two distinct $K$-orbits of inflated characters so that, in this case, the inclusion in \eqref{XiTheta} is strict.

On the other hand, if $\lvert Z_1\rvert<\lvert Z_2\rvert$, $\theta_1\neq\theta_2$, and $h\in H\setminus K$ then $\,^h\overset{\triangle}{\left[\theta_1\otimes\theta_2\right]}$ is not an inflated character. Indeed, on the one hand, there exists $d\in D$ such that $\theta_1(d)\neq\theta_2(d)$ and, on the other hand, we necessarily have $Z_2\cap hZ_1\neq\emptyset \neq Z_2\cap hZ_2$, so that there exists 
$f\in D^Z$ such that 
\[
\sum_{z\in Z_1\cap hZ_1}f(z)=0_D,\quad \sum_{z\in Z_2\cap hZ_1}f(z)=d,\quad
\sum_{z\in Z_1\cap hZ_2}f(z)=0_D,\quad \sum_{z\in Z_2\cap hZ_2}f(z)=-d.
\]
Then $f\in C$ but from \eqref{BigEq} and our assumptions it follows that $\,^h\overset{\triangle}{\left[\theta_1\otimes\theta_2\right]}(f)=\theta_1(d)\theta_2(-d)\neq 1$.

Finally, from \eqref{BigEq} it immediately follows that $^h\overset{\triangle}{\left[\theta\otimes\theta\right]}=\overset{\triangle}{\left[\theta\otimes\theta\right]}$, for all $\theta \in \widehat{D}$ and $h\in H$.

In conclusion, if $\lvert Z_1\rvert<\lvert Z_2\rvert$ then the $H$-orbit of an inflated character contains exactly one $K$-orbit of inflated characters (more precisely, exactly one inflated character) and in this case \eqref{XiTheta} is an equality. We have also proved that, in this case, $H_{\theta_1\otimes\theta_2}=K$ if $\theta_1\neq\theta_2$ while $H_{\theta\otimes\theta}=H$.
}
\end{example}

\begin{example}\label{Esempiowreath02}{\rm
Assume the notation in Example \ref{Esempiowreath}.\ref{Esempiowreath01} with $H,K$, and $Z$ as in Example \ref{Esempiowreath1}. Then the map $\pi$ may be defined in the following way: fix $z_j\in Z_j$, set $Z_j'\coloneqq Z_j\setminus\{z_j\}$, $j=1,2$, and then, for 
$f\in D^Z$, set $[\pi(f)](z)\coloneqq f(z)-f(z_j)$ for all $z\in Z'_j, j=1,2$. Then we get a surjective homomorphism $\pi\colon D^Z\equiv A\rightarrow D^{Z'_1\sqcup Z'_2}\cong B$. A character of $D^Z$ may be represented as a tensor product  $\bigotimes_{z\in Z}\theta_z$, 
where $\theta_z\in \widehat{D}$ for all $z\in Z$, with $\left[\bigotimes_{z\in Z}\theta_z\right](f)\coloneqq \prod_{z\in Z}\theta_z(f(z))$, for all $f\in D^Z$. The action of $H\equiv\Sym(Z)$ on $\widehat{D^Z}$ is given by permuting the factors:
\[
^h\!\!\left[\bigotimes_{z\in Z}\theta_z\right](f)=\left[\bigotimes_{z\in Z}\theta_z\right](h^{-1}f)=\prod_{z\in Z}\theta_z(f(hz))=\prod_{z\in Z}\theta_{h^{-1}z}(f(z)),
\]
for all $f \in D^Z$, that is, 
\begin{equation}\label{htens}
^h\!\!\left[\bigotimes_{z\in Z}\theta_z\right]=\bigotimes_{z\in Z}\theta_{h^{-1}z}.
\end{equation}
Analogously, a character of $B$ is of the form $\left(\bigotimes_{z\in Z'_1}\theta_z\right)\bigotimes\left(\bigotimes_{z\in Z'_2}\theta_z\right)$ and its inflation to $A$ is given by:
\[
\begin{split}
\left[\overset{\displaystyle\triangle}{\left(\bigotimes_{z\in Z'_1}\theta_z\right)\bigotimes\left(\bigotimes_{z\in Z'_2}\theta_z\right)}\right]&(f)=\left[\left(\bigotimes_{z\in Z'_1}\theta_z\right)\bigotimes\left(\bigotimes_{z\in Z'_2}\theta_z\right)\right](\pi(f))\\
& = \prod_{z\in Z_1'}\theta_z(f(z))\cdot\prod_{z\in Z_1'}\overline{\theta}_z(f(z_1))\cdot\prod_{z\in Z_2'}\theta_z(f(z))\cdot\prod_{z\in Z_2'}\overline{\theta}_z(f(z_2)),
\end{split}
\]
for all $f \in D^Z$, that is,
\[
\overset{\displaystyle\triangle}{\left(\bigotimes_{z\in Z'_1}\theta_z\right)\bigotimes\left(\bigotimes_{z\in Z'_2}\theta_z\right)}=\left(\bigotimes_{z\in Z_1}\xi_z\right)\bigotimes\left(\bigotimes_{z\in Z_2}\xi_z\right)
\]
where
\[
\xi_z=\begin{cases}
\theta_z&\text{if }z\in Z'_1\sqcup Z'_2\\
\prod_{z'\in Z_1'}\overline{\theta}_{z'}&\text{if }z=z_1\\
\prod_{z'\in Z_2'}\overline{\theta}_{z'}&\text{if }z=z_2.\\
\end{cases}
\]
It follows that a character of $D^Z$ is inflated from $B$ if and only if it satisfies the conditions:
\[
\prod_{z\in Z_1}\theta_z=\mathbf{1}\quad\text{ and }\quad \prod_{z\in Z_2}\theta_z=\mathbf{1},
\]
where $\mathbf{1}$ is the trivial character of $D$. Similarly, the $h$-translate in \eqref{htens} is inflated from $B$ if and only if
\[
\prod_{z\in Z_1}\theta_{h^{-1}z}=\mathbf{1}\quad\text{ and }\quad \prod_{z\in Z_2}\theta_{h^{-1}z}=\mathbf{1}.
\]
It is then easy to construct examples of inflated characters whose $H$-orbits contain inflated characters from other $K$-orbits, so that the inclusion \eqref{XiTheta} is strict. Indeed, if the four sets $Z_i\cap hZ_j$, $i,j=1,2$, are nonempty, we can choose the $\theta_z$'s in such a way that
\begin{equation}\label{HKorbitex}
\prod_{z\in Z_i\cap hZ_j}\theta_{h^{-1}z}=\mathbf{1}\quad i,j=1,2,\quad\text{ and }\quad\,^k\!\!\left[\bigotimes_{z\in Z_1}\theta_z\right]\neq\bigotimes_{z\in Z_1}\theta_{h^{-1}z},\quad\forall k\in K.
\end{equation}
Consequently, $\bigotimes_{z\in Z}\theta_z$ and $^h\!\left[\bigotimes_{z\in Z}\theta_z\right]$ are both inflated from $B$ but belong
to distinct $K$-orbits. 
Note that in order to satisfy \eqref{HKorbitex}, it suffices to suppose that (i) $D=D_1\times D_2$, with both $D_1$ and $D_2$ nontrivial 
and each $\theta_z \in \widehat{D}$ with $z\in Z_j$ trivial on $D_j$, for $j=1,2$; (ii) $\lvert Z_i\cap hZ_j\rvert\geq 2$, for $i,j=1,2$.}
\end{example}

Consider now the following representation of $K\ltimes A$ on $L(B)$: for $(k,a)\in K\ltimes A$, $f\in L(B)$, and $b\in B$ we set
\begin{equation}\label{repBy0}
[(k,a)f](b)\coloneqq f\left(-k^{-1}\pi(a)+k^{-1}b\right).
\end{equation}
This is just \eqref{genpermrep} restricted to $K\ltimes A$ and to the invariant subspace $L(B)\equiv L\left(\{y_0\}\times B\right)$ (cf. \eqref{basequatK}). In other words, the restriction of \eqref{squarp3} defines an action of $K\ltimes A$ on $B$ and \eqref{repBy0} is just the associated permutation representation. Since the stabilizer of $(y_0,0_B)$ in $H\ltimes A$ is $K\ltimes C$, it follows that \eqref{repBy0} is equivalent to $\Ind_{K\ltimes C}^{K\ltimes A}\iota_{K\ltimes C}$. 
The following is a refinement of the results in \cite[Section 5.2]{CST2}. 

\begin{theorem}\label{theoremdec} 
With the above notation, the following holds.
\begin{enumerate}
\item\label{theoremdec2} For every $\theta\in\Theta$ let $V_\theta$ denote the subspace of $L(B)$ spanned by the $K$-translates of $\theta$. Then the representation of $K\ltimes A$ on $V_\theta$ is irreducible and, moreover, it is equivalent to 
$\Ind_{K_\theta\ltimes A}^{K\ltimes A}\widetilde{\widetilde{\overline{\theta}}}$.

\item\label{theoremdec3-bis}
The irreducible $(K\ltimes A)$-representations $V_\theta$, $\theta\in\Theta$, are pairwise inequivalent.

\item\label{theoremdec3} The decomposition of $L(B)$ into irreducible $(K\ltimes A)$-representations is given by $L(B)=\bigoplus_{\theta\in\Theta}V_\theta$.

\item\label{theoremdec3-tris}
$((K\ltimes A), (K\ltimes C))$ is a Gelfand pair.
\end{enumerate}
\end{theorem}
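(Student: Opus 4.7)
The strategy is to apply the Mackey--Wigner little group method to the semidirect product $K\ltimes A$ (whose abelian normal subgroup is $A$), and to exploit the natural character basis of $L(B)$.

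First, I would observe that the set $\widehat{B}$ of characters of $B$ forms a basis of $L(B)$. A direct calculation from \eqref{repBy0} shows that, viewing a character $\theta\in\widehat{B}$ as an element of $L(B)$,
\[
(1_K,a)\theta=\overset{\triangle}{\overline{\theta}}(a)\cdot\theta\qquad\text{and}\qquad(k,0_A)\theta=\!^k\theta,
\]
for all $a\in A$ and $k\in K$. Thus each line $\mathbb{C}\theta$ is an $A$-eigenline for the character $\overset{\triangle}{\overline{\theta}}$, while $K$ permutes these lines according to its natural action on $\widehat{B}$. Grouping by $K$-orbits (indexed by $\Theta$) immediately yields \eqref{theoremdec3}, namely $L(B)=\bigoplus_{\theta\in\Theta}V_\theta$ as $(K\ltimes A)$-modules.

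For \eqref{theoremdec2}, I would identify $V_\theta$ with the stated induced representation. The line $\mathbb{C}\theta\subset V_\theta$ is $(K_\theta\ltimes A)$-invariant, since $K_\theta$ fixes $\theta$, and on it $K_\theta\ltimes A$ acts precisely by the character $\widetilde{\widetilde{\overline{\theta}}}$. Frobenius reciprocity therefore supplies a non-zero $(K\ltimes A)$-intertwiner
\[
\Phi\colon\Ind_{K_\theta\ltimes A}^{K\ltimes A}\widetilde{\widetilde{\overline{\theta}}}\longrightarrow V_\theta.
\]
The Mackey--Wigner classification \eqref{litgroupmeth} applied to $K\ltimes A$---with $\overset{\triangle}{\overline{\theta}}\in\widehat{A}$ (whose $K$-stabilizer is $K_\theta$) and with the trivial representation of $K_\theta$ as the little-group piece---guarantees that the domain of $\Phi$ is irreducible, so $\Phi$ is injective. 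Since both spaces have dimension $|K|/|K_\theta|$, equal to the size of the $K$-orbit of $\theta$ and hence to $\dim V_\theta$, the map $\Phi$ is an isomorphism, giving at once the irreducibility of $V_\theta$ and the claimed equivalence.

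For the pairwise inequivalence \eqref{theoremdec3-bis}, I would note that the $A$-spectrum of $V_\theta$ is precisely the $K$-orbit of $\overset{\triangle}{\overline{\theta}}$ in $\widehat{A}$. Since the elements of $\Theta$ label distinct $K$-orbits on $\widehat{B}$, the $A$-spectra of $V_\theta$ and $V_{\theta'}$ are disjoint whenever $\theta\neq\theta'$ in $\Theta$, so any non-zero $(K\ltimes A)$-intertwiner would have to respect the $A$-isotypic decomposition and therefore cannot exist. Finally, \eqref{theoremdec3-tris} is then immediate: as recalled just before the theorem, $L(B)$ is equivalent to $\Ind_{K\ltimes C}^{K\ltimes A}\iota_{K\ltimes C}$, and the decomposition already established exhibits this permutation representation as a multiplicity-free sum of irreducibles, so $((K\ltimes A),(K\ltimes C))$ is a Gelfand pair. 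The main technical point I expect to require care is the precise construction of $\Phi$ via the explicit realization of the induced representation given in Proposition \ref{Propindrep}; everything else reduces to orbit counting and a dimension match once the character-basis decomposition is in place.
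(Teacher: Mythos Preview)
Your proposal is correct and follows essentially the same route as the paper: both arguments use the character basis of $L(B)$ together with the $K$-action to obtain the orbit decomposition, then identify each $V_\theta$ with $\Ind_{K_\theta\ltimes A}^{K\ltimes A}\widetilde{\widetilde{\overline{\theta}}}$ and invoke the little group method for irreducibility and inequivalence. The only cosmetic difference is that the paper builds the intertwiner explicitly (sending $F_f\mapsto\sum_{k\in K}f(k)\,^k\theta$ and checking the intertwining relation by hand), whereas you invoke Frobenius reciprocity abstractly and then match dimensions; your $A$-spectrum argument for inequivalence is exactly the alternative proof the paper records in Remark~\ref{Remeasy}.
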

\begin{proof}
(1) Let $\theta\in\Theta$. First of all, note that
\begin{equation}\label{SpaceVchi}
V_\theta\equiv\left\{\sum_{k\in K}f(k)\;^k\theta\colon f\in L(K/K_\theta)\right\}.
\end{equation}
On the other hand, by Proposition \ref{Propindrep} and \eqref{defchitilde}, the space of 
\[
\Ind_{K_\theta\ltimes A}^{K\ltimes A}\widetilde{\widetilde{\overline{\theta}}}\sim\Ind_{K_\theta\ltimes A}^{K\ltimes A}\left(\widetilde{\widetilde{\overline{\theta}}}\otimes\overset{\triangle}{\iota_{K_\theta}}\right)
\]
is made up of the restriction to $K\ltimes A$ of the function $F_f$, with $f\in L(K/K_\theta)\equiv\Ind_{K_\theta}^K\iota_{K_\theta}$, that is (cf.\ \eqref{indrephatris}), 
$F_f(k,a)=\theta(\pi(k^{-1}a))f(k)$, for all $(k,a) \in  K\ltimes A$. Define a linear map $T\colon \Ind_{K_\theta\ltimes A}^{K\ltimes A}\widetilde{\widetilde{\overline{\theta}}}\rightarrow V_\theta$ 
by setting 
\begin{equation}\label{defTF}
TF_f\coloneqq \sum_{k\in K}f(k)\;^k\theta,
\end{equation}
for all $f \in L(K/K_\theta)$. Since the $^k\theta$, $k \in K/K_\theta$, are pairwise inequivalent, the map $T$ is injective and therefore
surjective (since $\dim \Ind_{K_\theta\ltimes A}^{K\ltimes A}\widetilde{\widetilde{\overline{\theta}}} = [K\ltimes A:K_\theta\ltimes A]
=[K:K_\theta] = \dim V_\theta$). Moreover, for $b\in B$, $k\in K$, and $a\in A$ we have
\begin{align*}
\left[(k,a)TF_f\right](b)& = \left[TF_f\right]\left(-k^{-1}\pi(a)+k^{-1}b\right)&(\text{by }\eqref{repBy0})\\
& = \sum_{k_1\in K}f(k_1)\;^{k_1}\theta\left(-k^{-1}\pi(a)+k^{-1}b\right)&(\text{by }\eqref{defTF})\\
& = \sum_{k_0\in K}f(k^{-1}k_0)\,\theta\!\left(-\pi(k_0^{-1}a)\right)\,^{k_0}\theta\left(b\right)& (k_0=kk_1)\\
& = \sum_{k_0\in K}\left[(k,a)F_f\right](k_0,0_A)\;^{k_0}\theta\left(b\right)&(\text{by }\eqref{indrephaaction})\\
& = \left[T(k,a)F_f\right](b),
\end{align*}
that is, $(k,a)TF_f=T(k,a)F_f$. This proves that $T$ is an intertwining operator yielding the desired equivalence between $V_\theta$ 
and the irreducible representation $\Ind_{K_\theta\ltimes A}^{K\ltimes A}\widetilde{\widetilde{\overline{\theta}}}$. 
For another proof of the irreducibility of $V_\theta$ see Remark \ref{Remeasy} below.

(2) From the little group method  (cf.\ \eqref{litgroupmeth}) it follows that different choices of $\theta\in\Theta$ give rise to inequivalent representations. It follows from (1) that the $V_\theta$, $\theta \in \Theta$, are pairwise inequivalent.

(3) The elements of $\Theta$ form a complete system of representatives for the orbits of 
$K$ on $\widehat{B}$ (see also Remark \ref{Remeasy}) and 
the characters of $B$ form a basis of $L(B)$. Since the $V_\theta$, $\theta\in\Theta$, 
are the subspaces spanned by the characters in a single $K$-orbit, we deduce that the direct sum of these subspaces is the whole of $L(B)$.

(4) The permutation representation of $K \ltimes A$ on $L(B)$ (with $B \equiv A/C = K\ltimes A/K\ltimes C)$ is multiplicity-free by
(1)-(3) and this is equivalent to $(K \ltimes A,K \ltimes C)$ being a Gelfand pair.
\end{proof}

\begin{remark}\label{Remeasy}{\rm
We briefly indicate a different (and folklore) proof of the fact that the $V_\theta$'s are irreducible. 
If $W\leq V_\theta$ is a nontrivial $(K\ltimes A)$-invariant subspace, then $\Res^{K\ltimes A}_AW$ contains a character $\psi$ of $A$, that is, there exists $f\in W, f\neq0$, such that $(1_K,a)f=\psi(a)f$, for all $a\in A$. From \eqref{repBy0} it follows that 
\[
f(\pi(a))=\bigl[(1_H,-a)f\bigr](0_{B})=\overline{\psi}(a)f(0_{B}),
\]
for all $a \in A$, so that $f(0_{B})\neq 0$ and we may assume that $f(0_{B})=1$. This implies that $f(\pi(a))=\overline{\psi}(a)$, for
all $a \in A$. Therefore, $f$ is a character of $B$ and it thus coincides with one of the $^k\theta$, $\theta\in\Theta$. 
On the other hand, $W$ must contain the whole $K$-orbit of $\theta$ so that it coincides with $V_\theta$. 
A similar argument may be used to prove that the $V_\theta$'s are pairwise inequivalent: 
indeed, different $K_\theta$-orbits yield different $\psi$'s.
}
\end{remark}

For every $\theta\in\Theta$ let 
\begin{equation}\label{Hcoverkc}
L(H_\theta/K_\theta)=\bigoplus_{j=0}^{n_\theta}m_{\theta,j}U_{\theta,j}\qquad\left(\text{equivalently, }\Ind_{K_\theta}^{H_\theta}\iota_{K_\theta}\sim\bigoplus_{j=0}^{n_\theta}m_{\theta,j}\sigma_{\theta,j}\right)
\end{equation}
denote the decomposition of the permutation representation of $H_\theta$ on $L(H_\theta/K_\theta)$ into irreducible representations; thus the positive integer $m_{\theta,j}$ is the multiplicity of the irreducible representation $\left(\sigma_{\theta,j},U_{\theta,j}\right)$. We assume that $\sigma_{\theta,0}$ is the trivial representation of $H_\theta$, so that $m_{\theta,0}=1$. 
We use the notation in \eqref{XiTheta}, \eqref{starp7}, and in Remark \ref{remconjindrep2}.

\begin{theorem}\label{Theodec}
With the above notation,
\[
\Ind_{K\ltimes C}^{H\ltimes A}\iota_{K\ltimes C}\sim \bigoplus_{\theta\in\Theta}\bigoplus_{j=0}^{n_\theta}m_{\theta,j}\Ind_{H_\theta\ltimes A}^{H\ltimes A}\left(\widetilde{\overline{\theta}}\otimes\overset{\triangle}{\sigma_{\theta,j}}\right)
\]
is a decomposition of the permutation representation of $H\ltimes A$ on $L(X)$ into irreducible representations. 
Moreover, if $\xi\in\Xi$ and $\sigma\in\widehat{H_\xi}$, then multiplicity of the irreducible representation 
$\Ind_{H_\xi\ltimes A}^{H\ltimes A}\left(\widetilde{\overline{\xi}}\otimes\overset{\triangle}{\sigma}\right)$ is equal to
\begin{equation}\label{summult}
\sum m_{\theta,j}
\end{equation}
where the sum runs over all pairs $(\theta,j)$, $\theta \in \Theta$ and $0 \leq j \leq n_\theta$, such that 
\[
\;^h\overset{\triangle}{\xi}=\overset{\triangle}{\theta}\quad(\text{so that }\theta\in[\xi])\quad\text{and}\quad\,^h\sigma\sim\sigma_{\theta,j},\quad\text{ for some }h\in H.
\]
In particular, the multiplicity of $\Ind_{H_\xi\ltimes A}^{H\ltimes A}\left(\widetilde{\overline{\xi}}\otimes\overset{\triangle}{\iota_{H_\xi}}\right)$ is equal to the cardinality of $[\xi]$.
\end{theorem}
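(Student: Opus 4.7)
The plan is to obtain the decomposition by induction in stages, using Theorem \ref{theoremdec} to handle the intermediate group $K\ltimes A$, and then to identify multiplicities via the little group method \eqref{litgroupmeth} together with Remark \ref{remconjindrep2}. First I apply induction in stages to write
\[
\Ind_{K\ltimes C}^{H\ltimes A}\iota_{K\ltimes C}\sim \Ind_{K\ltimes A}^{H\ltimes A}\Ind_{K\ltimes C}^{K\ltimes A}\iota_{K\ltimes C},
\]
and invoke parts (1) and (3) of Theorem \ref{theoremdec} to decompose the inner representation as $\bigoplus_{\theta\in\Theta}\Ind_{K_\theta\ltimes A}^{K\ltimes A}\widetilde{\widetilde{\overline{\theta}}}$. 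Applying induction in stages once more, with $K_\theta\ltimes A\leq H_\theta\ltimes A\leq H\ltimes A$, this yields
\[
\Ind_{K\ltimes C}^{H\ltimes A}\iota_{K\ltimes C}\sim\bigoplus_{\theta\in\Theta}\Ind_{H_\theta\ltimes A}^{H\ltimes A}\Ind_{K_\theta\ltimes A}^{H_\theta\ltimes A}\widetilde{\widetilde{\overline{\theta}}}.
\]

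Next I observe that $\widetilde{\widetilde{\overline{\theta}}}$ is the restriction to $K_\theta\ltimes A$ of the character $\widetilde{\overline{\theta}}$ of $H_\theta\ltimes A$, so the tensor identity for induced representations gives
\[
\Ind_{K_\theta\ltimes A}^{H_\theta\ltimes A}\widetilde{\widetilde{\overline{\theta}}}\sim \widetilde{\overline{\theta}}\otimes\Ind_{K_\theta\ltimes A}^{H_\theta\ltimes A}\iota_{K_\theta\ltimes A}.
\]
Since $A$ is a common normal subgroup acting trivially on $(H_\theta\ltimes A)/(K_\theta\ltimes A)\cong H_\theta/K_\theta$, the rightmost factor is the inflation of $\Ind_{K_\theta}^{H_\theta}\iota_{K_\theta}$, which by \eqref{Hcoverkc} equals $\bigoplus_{j=0}^{n_\theta}m_{\theta,j}\overset{\triangle}{\sigma_{\theta,j}}$. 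Substituting back produces the claimed decomposition, and each summand $\Ind_{H_\theta\ltimes A}^{H\ltimes A}(\widetilde{\overline{\theta}}\otimes\overset{\triangle}{\sigma_{\theta,j}})$ is irreducible directly from \eqref{litgroupmeth}, since $H_\theta$ is the $H$-stabilizer of $\overset{\triangle}{\overline{\theta}}\in\widehat{A}$ and $\sigma_{\theta,j}\in\widehat{H_\theta}$.

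For the multiplicity formula I use \eqref{litgroupmeth} together with Remark \ref{remconjindrep2}. Given $\xi\in\Xi$ and $\sigma\in\widehat{H_\xi}$, if $\theta\in[\xi]$ and $h\in H$ satisfies $\,^h\overset{\triangle}{\xi}=\overset{\triangle}{\theta}$, then Remark \ref{remconjindrep2} produces an equivalence
\[
\Ind_{H_\xi\ltimes A}^{H\ltimes A}(\widetilde{\overline{\xi}}\otimes\overset{\triangle}{\sigma})\sim\Ind_{H_\theta\ltimes A}^{H\ltimes A}(\widetilde{\overline{\theta}}\otimes\overset{\triangle}{\,^h\sigma}),
\]
and by the uniqueness (inequivalence) part of \eqref{litgroupmeth} this coincides with the summand $\Ind_{H_\theta\ltimes A}^{H\ltimes A}(\widetilde{\overline{\theta}}\otimes\overset{\triangle}{\sigma_{\theta,j}})$ if and only if $\,^h\sigma\sim\sigma_{\theta,j}$. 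Summing $m_{\theta,j}$ over all such pairs $(\theta,j)$ yields \eqref{summult}. The special case $\sigma=\iota_{H_\xi}$ is then immediate, since $\,^h\iota_{H_\xi}=\iota_{H_\theta}=\sigma_{\theta,0}$ with $m_{\theta,0}=1$, so each $\theta\in[\xi]$ contributes exactly one. The main obstacle I anticipate is not algebraic but notational: tracking bars, translates and stabilizer changes consistently between the $K$-orbit data $\Theta$ and the $H$-orbit data $\Xi$ through the definition \eqref{starp7} of $[\xi]$, so that the conjugation identification of Remark \ref{remconjindrep2} is applied to the right characters.
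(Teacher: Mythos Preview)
Your proof is correct and follows essentially the same route as the paper's: induction in stages through $K\ltimes A$, Theorem \ref{theoremdec}, then transitivity through $H_\theta\ltimes A$, the tensor identity for $\Ind_{K_\theta\ltimes A}^{H_\theta\ltimes A}\widetilde{\widetilde{\overline{\theta}}}$, identification of $\Ind_{K_\theta\ltimes A}^{H_\theta\ltimes A}\iota_{K_\theta\ltimes A}$ with the inflation of $\Ind_{K_\theta}^{H_\theta}\iota_{K_\theta}$, and finally \eqref{litgroupmeth} and Remark \ref{remconjindrep2} for the multiplicities. The only cosmetic difference is that the paper obtains the inflation step by invoking Proposition \ref{Propindrep} with $\psi=\iota_A$, while you argue it directly from the fact that $A$ acts trivially on $(H_\theta\ltimes A)/(K_\theta\ltimes A)$.
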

\begin{proof}
We have the following chain of equivalences, where we repeatedly use transitivity of induction (cf.\ \cite[Proposition 1.1.10]{book3} and
\cite[11.1.5]{book4}):
\[
\begin{split}
\Ind_{K\ltimes C}^{H\ltimes A}\iota_{K\ltimes C}&\sim\Ind_{K\ltimes A}^{H\ltimes A}\Ind_{K\ltimes C}^{K\ltimes A}\iota_{K\ltimes C}\\
(\text{by Theorem } \ref{theoremdec})\quad&\sim\bigoplus_{\theta\in\Theta}\Ind_{K\ltimes A}^{H\ltimes A}\Ind_{K_\theta\ltimes A}^{K\ltimes A}\widetilde{\widetilde{\overline{\theta}}}\\
\left(\widetilde{\widetilde{\overline{\theta}}}=\Res_{K_\theta\ltimes A}^{H_\theta\ltimes A}\widetilde{\overline{\theta}}\right)\quad&\sim\bigoplus_{\theta\in\Theta}\Ind_{K_\theta\ltimes A}^{H\ltimes A}\Res_{K_\theta\ltimes A}^{H_\theta\ltimes A}\widetilde{\overline{\theta}}\\
&\sim\bigoplus_{\theta\in\Theta}\Ind_{H_\theta\ltimes A}^{H\ltimes A}\Ind_{K_\theta\ltimes A}^{H_\theta\ltimes A}\Res_{K_\theta\ltimes A}^{H_\theta\ltimes A}\widetilde{\overline{\theta}}\\
(\text{by \cite[Corollary 11.1.17]{book4}})\quad&\sim\bigoplus_{\theta\in\Theta}\Ind_{H_\theta\ltimes A}^{H\ltimes A}\left[\widetilde{\overline{\theta}}\otimes \Ind\,_{K_\theta\ltimes A}^{H_\theta\ltimes A}\left(\widetilde{\iota_A}\otimes\overset{\triangle}{\iota_{K_\theta}}\right)\right]\\
(\text{by Proposition \ref{Propindrep} with $\psi = \iota_A$})\quad&\sim\bigoplus_{\theta\in\Theta}\Ind_{H_\theta\ltimes A}^{H\ltimes A}\left(\widetilde{\overline{\theta}}\otimes \overset{\triangle}{\Ind}\,_{K_\theta}^{H_\theta}\iota_{K_\theta}\right)\\
(\text{by }\eqref{Hcoverkc})\quad&\sim\bigoplus_{\theta\in\Theta}\bigoplus_{j=0}^{n_\theta}m_{\theta,j}\Ind_{H_\theta\ltimes A}^{H\ltimes A}\left(\widetilde{\overline{\theta}}\otimes\overset{\triangle}{\sigma_{\theta,j}}\right)\!.
\end{split}
\]
Note that every summand $\Ind_{H_\theta\ltimes A}^{H\ltimes A}\left(\widetilde{\overline{\theta}}\otimes\overset{\triangle}{\sigma_{\theta,j}}\right)$ is irreducible by the little group method (cf.\ \eqref{litgroupmeth}). 

In order to compute the multiplicities \eqref{summult}, first note that if $\theta\in[\xi]$ and $\theta\neq \xi$, then there exists 
$h\in H\setminus K$ such that $^h(\overset{\triangle}{\xi})=\overset{\triangle}{\theta}$. It follows that, when using the little group method, $\overline{\theta}$ and $\overline{\xi}$ give rise to equivalent representations (see also Remark \ref{remconjindrep2}) and the sum in \eqref{summult} follows immediately. In particular, since the $h$-conjugate of the trivial representation is the trivial representation, the corresponding sum is just $\sum_{\theta\in[\xi]}1 = \vert [\xi] \vert$.
\end{proof}

\begin{corollary}\label{Cormultfree}
The permutation representation of $H\ltimes A$ on $L(X)$ is multiplicity-free if and only if both the following conditions are satisfied:
\begin{enumerate}[{\rm (i)}]
\item\label{Cormultfree1}
$\Xi=\Theta$ in \eqref{XiTheta};
\item\label{Cormultfree2}
for every $\theta\in\Theta$ the permutation representation of $H_\theta$ on $L(H_\theta/K_\theta)$ is multiplicity-free.
\end{enumerate}
Moreover, if this is the case, the decomposition of $L(X)$ into irreducible representations is:
\[
L(X)\sim \bigoplus_{\theta\in\Theta}\bigoplus_{j=0}^{n_\theta}\Ind_{H_\theta\ltimes A}^{H\ltimes A}\left(\widetilde{\overline{\theta}}\otimes\overset{\triangle}{\sigma_{\theta,j}}\right)\!,
\]
where, for different pairs $(\theta,j)$, the representations are pairwise inequivalent. Finally,
\begin{equation}\label{dimind}
\dim \Ind_{H_\theta\ltimes A}^{H\ltimes A}\left(\widetilde{\overline{\theta}}\otimes\overset{\triangle}{\sigma_{\theta,j}}\right)=\frac{\lvert H\rvert\dim\sigma_{\theta,j}}{\lvert H_\theta\rvert}.
\end{equation}
\end{corollary}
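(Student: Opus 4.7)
The plan is to read the corollary directly off the multiplicity formula \eqref{summult} in Theorem \ref{Theodec}. The permutation representation of $H\ltimes A$ on $L(X)$ is multiplicity-free precisely when, for every $\xi\in\Xi$ and every $\sigma\in\widehat{H_\xi}$, the multiplicity of $\Ind_{H_\xi\ltimes A}^{H\ltimes A}\left(\widetilde{\overline{\xi}}\otimes\overset{\triangle}{\sigma}\right)$ in $L(X)$ is at most $1$, and I shall translate this uniform bound into the two stated conditions by varying $\sigma$.

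For necessity, I would first test with $\sigma=\iota_{H_\xi}$: Theorem \ref{Theodec} asserts that the corresponding multiplicity equals $\lvert[\xi]\rvert$, so multiplicity-freeness forces $\lvert[\xi]\rvert=1$ for every $\xi\in\Xi$. By the definitions in \eqref{XiTheta}--\eqref{starp7}, this is exactly the assertion that each $H$-orbit of inflated characters coincides with a single $K$-orbit, i.e.\ $\Xi=\Theta$, which is (i). Having (i) at hand, I would then fix any $\theta\in\Theta$ and $\sigma\in\widehat{H_\theta}$: since $[\theta]=\{\theta\}$, the condition $^h\overset{\triangle}{\theta}=\overset{\triangle}{\theta'}$ for some $\theta'\in[\theta]$ forces $h\in H_\theta$, so $^h\sigma\sim\sigma$ by the inner-twist observation of Remark \ref{remconjindrep}. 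Because the $\sigma_{\theta,j}$ are pairwise inequivalent by \eqref{Hcoverkc}, the sum \eqref{summult} collapses to the single term $m_{\theta,j}$ when $\sigma\sim\sigma_{\theta,j}$ (and vanishes otherwise). Imposing $m_{\theta,j}\le1$ for every such pair $(\theta,j)$ yields (ii).

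For sufficiency, the same computation shows that under (i) and (ii) every nonzero multiplicity in \eqref{summult} equals $m_{\theta,j}=1$, while the little group method \eqref{litgroupmeth} guarantees that distinct pairs $(\theta,j)$ with $\theta\in\Theta$ yield pairwise inequivalent irreducible summands; the displayed decomposition of $L(X)$ follows at once. The dimension formula \eqref{dimind} is then the standard identity $\dim\Ind_L^G\rho=[G:L]\dim\rho$ applied to $G=H\ltimes A$, $L=H_\theta\ltimes A$, and $\rho=\widetilde{\overline{\theta}}\otimes\overset{\triangle}{\sigma_{\theta,j}}$, whose dimension equals $\dim\sigma_{\theta,j}$ because the character $\widetilde{\overline{\theta}}$ is one-dimensional. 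The main delicate point will be to keep careful track of the passage between $K$-orbits on $\widehat{B}$ (indexed by $\Theta$) and $H$-orbits on $\widehat{A}$ (indexed by $\Xi$), and to observe that the conjugation action of Remark \ref{remconjindrep2} is trivial on representations of $H_\theta$ when the conjugating element lies in $H_\theta$ itself: this is exactly what forces the sum \eqref{summult} to collapse to a single term under (i).
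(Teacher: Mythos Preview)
Your proof is correct and follows exactly the approach indicated by the paper, which simply states that all claims follow immediately from Theorem~\ref{Theodec} together with the standard dimension formula for induced representations. You have spelled out in detail what the paper leaves implicit: testing \eqref{summult} first with $\sigma=\iota_{H_\xi}$ to obtain~(i), then using~(i) to collapse the sum \eqref{summult} to the single term $m_{\theta,j}$ and thereby obtain~(ii), with the converse and the decomposition following by the same reasoning.
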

\begin{proof}
All statements immediately follow from the above theorem. Just note that \eqref{dimind} is deduced from the formula for
the dimension of an induced representation (cf.\ \cite[(1.5)]{book3} and \cite[(11.10)]{book4}).
\end{proof}

\begin{example}\label{Esempiowreath2}{\rm
In Example \ref{Esempiowreath1}, we proved that when $\lvert Z_1\rvert<\lvert Z_2\rvert$, then condition (i) in Corollary \ref{Cormultfree} is satisfied. Note that, in fact, also the second condition is satisfied: indeed $H_\theta$ is equal to either $\Sym(Z)$ or $\Sym(Z_1)\times\Sym(Z_2)$ (so that $K_\theta = H_\theta \cap H = \Sym(Z_1)\times\Sym(Z_2)$) and it is well known that 
$(\Sym(Z),\Sym(Z_1)\times\Sym(Z_2))$ is a Gelfand pair (cf.\ \cite[Chapter 6]{book}).
}
\end{example}

\section{An intermediate basis}
\label{SecIntBasis}

In order to give a concise expression for the spherical functions and for the spherical representations for the Gelfand pair in
Corollary \ref{Cormultfree}, we shall introduce a suitable basis in $L(X)$, made up of characters, and then a suitable basis for the 
vector space of all $(K\ltimes C)$-invariant functions, made up of sum of characters.
For $y\in Y$ and $\chi\in\widehat{B_y}$, define $\chi_y^\sharp \in L(X)$ by setting
\begin{equation}\label{chisharp}
\chi^\sharp(y',b)\coloneqq\begin{cases}
\chi(b)&\text{ if }y'=y\\
0&\text{ otherwise,}
\end{cases}
\end{equation}
for all $(y',b)\in X$. Clearly, the set $\left\{\chi^\sharp\colon\chi\in \widehat{B_y}, y\in Y\right\}$ is an orthogonal basis for $L(X)$.
The norm of $\chi^\sharp$ is given by

\begin{equation}
\label{e:norma-chi-sharp}
\|\chi^\sharp\|^2 = \sum_{(y',b) \in X} |\chi^\sharp(y',b)|^2 = \sum_{b \in B_y} |\chi(b)|^2 = \|\chi\|^2 = |B|.
\end{equation}

From \eqref{defftilde} and \eqref{squarp3} it follows that, for $(h,a)\in H\ltimes A$,
\begin{equation}\label{chitilde}
\overset{\triangle}{\chi^\sharp}(h,a)=\chi^\sharp((h,a)(y_0,0_A))=\chi^\sharp(hy_0,\pi_{hy_0}(a))=\begin{cases}
\chi(\pi_{hy_0}(a))&\text{if }hy_0=y\\
0&\text{otherwise.}
\end{cases}
\end{equation}
For $h\in H$, $y\in Y$, and $\chi\in\widehat{B_y}$, define $^h\chi\in\widehat{B_{hy}}$ by setting 
\begin{equation}\label{defhtransl2}
\,^h\chi(b)\coloneqq \chi(h^{-1}b),\qquad\text{ for all }b\in B_{hy}.
\end{equation}
This defines an action of $H$ on $\bigsqcup\limits_{y\in Y}\widehat{B_y}$.
We then denote by $K_y$ the stabilizer in $K$ of $y\in Y$ and by $K_\chi$ the stabilizer in $K_y$ of $\chi\in\widehat{B_y}$. 
Note that $K=K_{y_0}$ and the stabilizer $K_\theta$ in $K$ of $\theta \in \widehat{B} = \widehat{B_{y_0}}$ is nothing but $H_\theta
\cap K$ (cf.\ \eqref{HKchi}). For, on the one hand, if $k \in H_\theta \cap K$ then 
$^k\overset{\triangle}{\theta} = \overset{\triangle}{\theta}$, and, on the other hand, for $b \in B$ there exists $a \in A$ such that 
$\pi(a) = b$; it follows that 
$^k\theta(b) = ^k\theta(\pi(a)) = ^k\overset{\triangle}{\theta}(a) = \overset{\triangle}{\theta}(a) = \theta(\pi(a)) = \theta(b)$, 
showing that $H_\theta \cap K \subset K_\theta$.
The reverse inclusion follows the same lines.

\begin{remark}\label{Remchitheta0}{\rm
Given $\chi\in\widehat{B_y}$, $y\in Y$, there exists $h=h(\chi)\in H$ such that $hy_0=y$ and $\,^{h^{-1}}\chi\in\Theta$. Indeed, since the
action of $H$ on $Y$ is transitive, we can find $h_0 \in H$ such that $h_0y_0=y$. Then, $^{h_0^{-1}}\chi\in\widehat{B}$ so that there exist 
$k\in K$ and $\theta\in\Theta$ such that $^{h_0^{-1}}\chi=\,^k\theta$. We then set $h = h(\chi) \coloneqq h_0k$.
Note that $\theta = \theta(\chi)$ (the representative in $\Theta$ of the $K$-orbit in $\widehat{B}$ containing $^{h_0^{-1}}\chi$) is unique, 
while the element $k \in K$ (and, consequently, $h(\chi)$) is defined up to right multiplication by an element $k'\in K_\theta$. 
Finally, for $\theta\in \Theta$ and $y\in Y$, we denote by $\widehat{B_{y}}_{,\theta}$ the set of all characters $\chi\in\widehat{B_y}$ 
such that $^{h(\chi)}\theta=\chi$, equivalently, $\theta(\chi) = \theta$.
}
\end{remark}

We now collect some useful results.

\begin{proposition}
Let $\chi\in\widehat{B_y}$, $h_0$ and $h$ in $H$ such that $hy_0=y$, and $\theta\in \Theta$. Then
\begin{equation}\label{hh1chitheta}
\sum_{a\in A} {^{h_0}}\overset{\triangle}{\theta}(a)\,\overset{\triangle}{\overline{\chi}^{\sharp}}(h,a)=
\left\langle ^{h^{-1}h_0}\overset{\triangle}{\theta},\overset{\triangle}{\,^{h^{-1}}\chi}\right\rangle_{L(A)}\equiv\begin{cases}
\lvert A\rvert&\text{if \ } ^{h^{-1}h_0}\overset{\triangle}{\theta}=\overset{\triangle}{\,^{h^{-1}}\chi}\\
0&\text{otherwise},
\end{cases}
\end{equation}
where $\overset{\triangle}{\,^{h^{-1}}\chi}$ denotes the inflation of $\,^{h^{-1}}\chi\in\widehat{B}$ to $A$.
\end{proposition}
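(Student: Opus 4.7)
The plan is to unfold every ``decoration'' ($^\sharp$, $\overset{\triangle}{}$, $^h(\cdot)$) by its definition and then apply the compatibility relation \eqref{basequat} to bring both factors in the sum into the standard form of characters on $A$, at which point the result follows from ordinary character orthogonality.

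First, since $hy_0 = y$, formula \eqref{chitilde} (applied to $\overline\chi \in \widehat{B_y}$) gives $\overset{\triangle}{\overline{\chi}^{\sharp}}(h,a) = \overline\chi(\pi_y(a))$. Simultaneously, from \eqref{defhtransl} we have $^{h_0}\!\overset{\triangle}{\theta}(a) = \overset{\triangle}{\theta}(h_0^{-1}a) = \theta(\pi(h_0^{-1}a))$. So the sum on the left-hand side equals $\sum_{a \in A} \theta(\pi(h_0^{-1}a))\, \overline\chi(\pi_y(a))$.

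Next, I would perform the change of variable $a \mapsto ha$ (a bijection on $A$ since $h$ acts by automorphisms). The first factor becomes $\overset{\triangle}{\theta}(h_0^{-1}h a) = {^{h^{-1}h_0}}\!\overset{\triangle}{\theta}(a)$. For the second factor, using $y = hy_0$ together with \eqref{basequat} we get $\pi_y(ha) = \pi_{hy_0}(ha) = h\pi(a)$, hence $\overline\chi(\pi_y(ha)) = \overline{\chi(h\pi(a))} = \overline{\,^{h^{-1}}\!\chi(\pi(a))} = \overline{\overset{\triangle}{\,^{h^{-1}}\chi}(a)}$, where the middle equality is the very definition of the translate $^{h^{-1}}\chi \in \widehat{B}$.

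Combining the two rewritings, the left-hand side becomes $\sum_{a \in A}{^{h^{-1}h_0}}\!\overset{\triangle}{\theta}(a)\,\overline{\overset{\triangle}{\,^{h^{-1}}\chi}(a)}$, which is by definition the scalar product $\langle {^{h^{-1}h_0}}\!\overset{\triangle}{\theta},\overset{\triangle}{\,^{h^{-1}}\chi}\rangle_{L(A)}$. Since both are characters of the finite abelian group $A$, orthogonality of characters yields $|A|$ when they coincide and $0$ otherwise, which is exactly the right-hand side. The only obstacle is notational bookkeeping: one has to be careful to apply \eqref{basequat} at the right moment to translate between $\pi_y$ and $\pi$, and to keep straight that $^h(\cdot)$ means different things on $\widehat A$ and on $\bigsqcup_y \widehat{B_y}$; there is no conceptual difficulty beyond this.
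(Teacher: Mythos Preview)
Your proof is correct and follows essentially the same approach as the paper: both unfold the definitions via \eqref{defhtransl}, \eqref{definfl}, and \eqref{chitilde}, perform the change of variable $a=ha'$, apply \eqref{basequat} to pass from $\pi_y$ to $\pi$, and then invoke orthogonality of characters on $A$.
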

\begin{proof}
We have, 
\begin{align*}
\sum_{a\in A}\;^{h_0}\overset{\triangle}{\theta}(a)\,\overset{\triangle}{\overline{\chi}^{\sharp}}(h,a)& = \sum_{a\in A}\theta(\pi(h_0^{-1}a))\,\overline{\chi}\left(\pi_{hy_0}(a)\right)&(\text{by }\eqref{defhtransl}, \eqref{definfl}\text{ and }\eqref{chitilde})\\
& = \sum_{a'\in A}\theta(\pi(h_0^{-1}ha'))\,^{h^{-1}}\overline{\chi}(\pi(a'))&(\text{by }\eqref{basequat}, \eqref{defhtransl2}\text{ and }a=ha')\\
& = \left\langle^{h^{-1}h_0}\overset{\triangle}{\theta},\overset{\triangle}{\,^{h^{-1}}\chi}\right\rangle_{L(A)} &\left(\text{by }\eqref{defhtransl}, \eqref{definfl}\text{ and }\,^{h^{-1}}\chi\in\widehat{B}\right).
\end{align*}
This shows the first equality in \eqref{hh1chitheta}. The other equality follows from the orthogonality relations for characters.
\end{proof}

\begin{proposition}\label{Propex4.3}
Let $h\in H$, $a\in A$, and $\chi\in \widehat{B_y}$. Then
\begin{equation}\label{actchisharp}
(h,a)\chi^\sharp=\overline{\chi}(\pi_y(h^{-1}a))\left(\,^h\chi\right)^\sharp.
\end{equation}
In particular, $\chi^\sharp$ is $C$-invariant if and only if
\begin{equation}\label{chiCinv} 
\chi(\pi_y(c))=1 \text{\ \  for all } c\in C.
\end{equation}
Moreover, if $\chi^\sharp$ is $C$-invariant then also $\left(\,^k\chi\right)^\sharp$ is $C$-invariant, for all $k\in K$.
\end{proposition}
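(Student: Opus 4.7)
The proof is a direct computation with the definitions, so my plan is to establish \eqref{actchisharp} first and then deduce the two consequences.

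For \eqref{actchisharp}, I would evaluate $[(h,a)\chi^\sharp](y',b)$ at an arbitrary $(y',b)\in X$ using the explicit formula \eqref{genpermrep} for the permutation representation, obtaining
\[
[(h,a)\chi^\sharp](y',b)=\chi^\sharp\bigl(h^{-1}y',-h^{-1}\pi_{y'}(a)+h^{-1}b\bigr).
\]
By the support condition \eqref{chisharp}, this vanishes unless $h^{-1}y'=y$, i.e.\ $y'=hy$. For such $y'$, I would use the compatibility relation \eqref{basequat} in the form $\pi_{hy}(a)=h\pi_y(h^{-1}a)$ to rewrite $-h^{-1}\pi_{hy}(a)=-\pi_y(h^{-1}a)$. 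Factoring out the character in the first summand then yields
\[
[(h,a)\chi^\sharp](hy,b)=\chi\bigl(-\pi_y(h^{-1}a)\bigr)\,\chi(h^{-1}b)=\overline{\chi}\bigl(\pi_y(h^{-1}a)\bigr)\,\!^h\chi(b),
\]
using the definition \eqref{defhtransl2} of $\,^h\chi$ on $B_{hy}$. Comparing with the defining formula \eqref{chisharp} for $\bigl(\,^h\chi\bigr)^\sharp$, this is precisely $\overline{\chi}(\pi_y(h^{-1}a))\bigl(\,^h\chi\bigr)^\sharp(hy,b)$, and both sides vanish off the fiber over $hy$. This proves \eqref{actchisharp}.

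For \eqref{chiCinv}, I would specialize \eqref{actchisharp} to $h=1_H$ and $a=c\in C$, which gives $(1_H,c)\chi^\sharp=\overline{\chi}(\pi_y(c))\,\chi^\sharp$. Hence $\chi^\sharp$ is fixed by every element of $C$ if and only if $\overline{\chi}(\pi_y(c))=1$ (equivalently $\chi(\pi_y(c))=1$) for all $c\in C$, which is \eqref{chiCinv}.

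Finally, for the last assertion I would check the criterion \eqref{chiCinv} for $\bigl(\,^k\chi\bigr)^\sharp$, which lives on $B_{ky}$. For $c\in C$, using \eqref{defhtransl2} and \eqref{basequat},
\[
\,^k\chi\bigl(\pi_{ky}(c)\bigr)=\chi\bigl(k^{-1}\pi_{ky}(c)\bigr)=\chi\bigl(\pi_y(k^{-1}c)\bigr).
\]
Since $C$ is $K$-invariant we have $k^{-1}c\in C$, and by the $C$-invariance hypothesis $\chi(\pi_y(k^{-1}c))=1$. Thus $\bigl(\,^k\chi\bigr)^\sharp$ is $C$-invariant. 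No real obstacle arises here; the only point requiring a bit of care is the consistent use of the intertwining relation \eqref{basequat} between $\pi_y$, $\pi_{hy}$, and the $H$-action, which is what drives every step of the computation.
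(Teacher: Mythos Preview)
Your proof is correct and follows essentially the same approach as the paper: a direct evaluation via \eqref{genpermrep}, the support condition \eqref{chisharp}, the compatibility relation \eqref{basequat}, and the definition \eqref{defhtransl2}, followed by the specialization to $(1_H,c)$ and the $K$-invariance of $C$.
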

\begin{proof}
Let $y'\in Y$ and $b\in B_{y'}$. Then
\begin{align*}
\left[(h,a)\chi^\sharp\right](y',b)  & = \chi^\sharp(h^{-1}y',-h^{-1}\pi_{y'}(a)+h^{-1}b) && (\text{by }\eqref{genpermrep})\\
& = \begin{cases}
^h\chi(b)\overline{\chi}(\pi_y(h^{-1}a))&\text{ if }h^{-1}y'= y\\
0&\text{ otherwise } \end{cases} && (\text{by }\eqref{basequat},\eqref{chisharp} \text{ and } \eqref{defhtransl2})\\
& = \overline{\chi}(\pi_y(h^{-1}a))\left(\,^h\chi\right)^\sharp(y',b),
\end{align*}
and this proves \eqref{actchisharp}. In particular, if $c\in C$ then $(1_H,c)\chi^\sharp=\overline{\chi}(\pi_y(c))\chi^\sharp$; it follows that $\chi^\sharp$ is $C$-invariant if and only if $\chi$ satisfies \eqref{chiCinv}. 
Finally, if $\chi$ satisfies \eqref{chiCinv} and $k\in K$ then, for all $c\in C$ we have
\begin{align*}
\,^k\chi\left(\pi_{ky}(c)\right)& = \chi\left(k^{-1}\pi_{ky}(c)\right)&(\text{by }\eqref{defhtransl2})\\
& = \chi\left(\pi_{y}(k^{-1}c)\right)&(\text{by }\eqref{basequat})\\
& = 1 &(\text{as } C\text{ is }K\text{-invariant}).
\end{align*}
\end{proof}

Clearly, condition \eqref{chiCinv} is equivalent to $\Ker(\chi\circ \pi_y) \supseteq C+h_0C$, where $h_0y_0=y$ (cf.\ \eqref{starrrp3}).

We denote by $\widehat{B_y}^0$ the set of all $\chi\in \widehat{B_y}$ that satisfy \eqref{chiCinv}. Note that $\widehat{B_y}^0$ is nonempty as it contains the trivial character. By Proposition \ref{Propex4.3}, the set ${\mathcal B} \coloneqq \bigsqcup\limits_{y\in Y}\widehat{B_y}^0$ is $K$-invariant. Setting ${\mathcal B}' \coloneqq \{(y,\chi)\colon y\in Y, \chi\in \widehat{B_y}^0\}$, the map $\chi \mapsto (y,\chi)$ yields a $K$-equivariant bijection of ${\mathcal B}$ onto ${\mathcal B}'$, so that we identify them. 
Consequently, we choose a complete set $\Gamma$ of representatives for the orbits of $K$ on ${\mathcal B}$ of the form (cf.\ \eqref{defGamma0}):
\begin{equation}
\label{e:Gamma-rep}
\Gamma \coloneqq \{(z,\chi): z \in Z, \chi \in {\mathcal B}_z^0\},
\end{equation}
where ${\mathcal B}_z^0 \subset \widehat{B_z}^0$ is a complete set of representatives of the $K$-orbits in ${\mathcal B}$. 
This way, we have
\begin{equation}\label{defGamma}
\bigsqcup_{y\in Y}\widehat{B_y}^0=\bigsqcup_{(z,\chi)\in\Gamma}\,^K\chi,
\end{equation}
where $^K\chi\coloneqq\{\,^k\chi\colon k\in K\}$ is the $K$-orbit of $\chi$. 

\begin{remark}\label{Remchitheta}{\rm
By Remark \ref{Remchitheta0}, given $(z,\chi)\in\Gamma$ there exists exactly one $\theta\in\Theta$ such that $hy_0=z$ and $\,^h\theta=\chi$ for some $h\in H$: in the notation therein, we write $h=h(\chi)$. In this case, $\theta$ satisfies the condition $\Ker\left(\overset{\triangle}{\theta}\right)\supseteq h(\chi)^{-1}C+C$. Note that this condition is trivially satisfied if $h(\chi)\in H_\theta$.
 Finally, for $\theta\in \Theta, y\in Y$, we denote by $\widehat{B_{y}}_{,\theta}^0$ the set of all $\chi\in\widehat{B_y}^0$ such that $\,^{h(\chi)}\theta=\chi$. Note that $\widehat{B_{y}}_{,\theta}^0$ consists of all $\chi \in \widehat{B_{y}}_{,\theta}$ satisfying condition
\eqref{chiCinv}.
}
\end{remark}

Recall that $H_\theta$ stabilizes $\overset{\triangle}{\theta}$ but not $\theta$ (cf.\ \eqref{HKchi} and \eqref{defhtransl2}). 
In fact, the stabilizer of $\theta$, is $K_\theta = H_\theta \cap K$ (cf.\ \eqref{basequatK} and \eqref{definfl}).

\begin{lemma}\label{LemmaGammatheta}
Let $\theta\in \Theta$. Then, for $(z,\chi)\in \Gamma$ one has 
\[
Kz\cap H_\theta y_0\neq \emptyset \iff K(z,\chi)\cap H_\theta(y_0,\theta)\neq\emptyset.
\]
Moreover, if $z\in Z$ and $Kz\cap H_\theta y_0\neq \emptyset$ then there exists exactly one $\chi\in \widehat{B_z}^0$ such that $(z,\chi)\in\Gamma$ and $K(z,\chi)\cap H_\theta(y_0,\theta)\neq\emptyset$.
\end{lemma}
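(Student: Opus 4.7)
The lemma couples an equivalence with an existence/uniqueness statement; the plan is to handle both together, since the forward direction of the equivalence amounts to the existence part. The easy direction $(\Leftarrow)$ of the equivalence is immediate: if $k(z,\chi)=h(y_0,\theta)$ with $k\in K$ and $h\in H_\theta$, projecting to the first coordinate gives $kz=hy_0\in Kz\cap H_\theta y_0$.

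For the forward direction combined with existence, one assumes $k_0 z=h_0 y_0$ with $k_0\in K$ and $h_0\in H_\theta$, and sets $g:=h_0^{-1}k_0$, so that $gz=y_0$. The candidate is $\chi^*:=\,^{g^{-1}}\theta\in\widehat{B_z}$. The first task is to verify $\chi^*\in\widehat{B_z}^0$, i.e.\ condition \eqref{chiCinv}. Using \eqref{defhtransl2}, the equivariance \eqref{basequat}, the definition of inflation \eqref{definfl}, and the factorization $g^{-1}=k_0^{-1}h_0$ with $h_0\in H_\theta$, one computes
\[
\chi^*(\pi_z(c))=\theta(\pi(gc))=\,^{g^{-1}}\overset{\triangle}{\theta}(c)=\,^{k_0^{-1}}\overset{\triangle}{\theta}(c)=\overset{\triangle}{\theta}(k_0c),
\]
and the $K$-invariance of $C$ yields $k_0c\in C$, hence $\overset{\triangle}{\theta}(k_0c)=1$. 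By construction $g\cdot(z,\chi^*)=(y_0,\theta)$, equivalently $k_0(z,\chi^*)=h_0(y_0,\theta)$, which directly witnesses $K(z,\chi^*)\cap H_\theta(y_0,\theta)\neq\emptyset$. Replacing $\chi^*$ by the unique element $\chi\in\mathcal{B}_z^0$ in its $K_z$-orbit produces the pair $(z,\chi)\in\Gamma$ with $K(z,\chi)=K(z,\chi^*)$, completing existence.

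The main obstacle will be uniqueness. Suppose $(z,\chi_1),(z,\chi_2)\in\Gamma$ both satisfy the orbit condition via $k_i(z,\chi_i)=h_i(y_0,\theta)$ with $k_i\in K$ and $h_i\in H_\theta$; setting $t_i:=k_i^{-1}h_i$ gives $\chi_i=\,^{t_i}\theta$ with $t_iy_0=z$, and $t_1^{-1}t_2$ lies in the $y_0$-stabilizer $K$. The strategy is to prove $\chi_1$ and $\chi_2$ lie in a common $K_z$-orbit by producing $k_z\in K_z$ with $\,^{k_z}\chi_1=\chi_2$; since $\,^{k_zt_1}\theta=\,^{t_2}\theta$ is equivalent to $t_2^{-1}k_zt_1\in H_\theta$, one is reduced to exhibiting an $\alpha\in K_\theta$ with $h_2\alpha h_1^{-1}\in K_\theta$. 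Once such $k_z$ is found, the transversal property of $\mathcal{B}_z^0$ for $K_z$-orbits on $\widehat{B_z}^0$ forces $\chi_1=\chi_2$. Arranging that the cross-term $h_2\alpha h_1^{-1}$ land back in $K_\theta$ is the delicate step, where the interplay between $K$, $H_\theta$, and the double-coset structure in $H$, together with the standing hypothesis $\chi_i\in\widehat{B_z}^0$, must be carefully unwound.
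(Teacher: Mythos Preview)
Your handling of $(\Leftarrow)$ and of existence is correct and matches the paper. The gap is uniqueness, which you explicitly leave unfinished: your reduction to finding $\alpha\in K_\theta$ with $h_2\alpha h_1^{-1}\in K_\theta$ is correct (since $h_1,h_2,\alpha$ all lie in $H_\theta$, membership in $K$ is equivalent to membership in $K_\theta$), and it amounts to showing that $h_1y_0$ and $h_2y_0$ lie in the same $K_\theta$-orbit on $Y$, given only that both lie in $Kz\cap H_\theta y_0$. But ``must be carefully unwound'' is not a proof, and you supply no argument for this transitivity. Your suggestion that the hypothesis $\chi_i\in\widehat{B_z}^0$ will help is a red herring: since $k_i\in K$ and $C$ is $K$-invariant, that condition is automatic and carries no extra information.

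The paper's uniqueness step is terser: in your notation it puts forward the element $t_2t_1^{-1}$, checks that it stabilizes $z$ and sends $\chi_1$ to $\chi_2$, and asserts membership in $K_z$. Note, however, that $t_2t_1^{-1}\in K$ would force $h_1y_0=h_2y_0$, which the hypotheses do not give (one only has $k_1^{-1}h_1y_0=z=k_2^{-1}h_2y_0$ with possibly $k_1\neq k_2$), and small examples show this element can lie outside $K$: take $H=S_4$ on $\{1,2,3,4\}$, $y_0=1$, $K=\Stab(1)$, $A=(\mathbb{Z}/2)^4$ with coordinate permutation, $C=0$, $\theta=\chi_{\{2\}}$, $z=2$, and $(k_1,h_1,k_2,h_2)=((23),(13),(24),(14))$; then $t_2t_1^{-1}$ sends $1\mapsto 3$. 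So your caution is justified: the paper's displayed conjugator stabilizes $z$ in $H$ but need not lie in $K$, and a complete argument must either exhibit a genuine element of $K_z$ (in the example, $(34)$ works) or establish directly that $K_\theta$ acts transitively on each nonempty $Kz\cap H_\theta y_0$ --- which is exactly your unresolved ``delicate step''.
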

\begin{proof}
If $z\in Z$ and $Kz$ intersect $H_\theta y_0$, that is,  
$khy_0=z$ for some $h\in H_\theta$ and $k\in K$, then $^h\theta\in \widehat{B_{hy_0}}_{,\theta}^0$ (by definition: cf.\ Remark \ref{Remchitheta}) so that, by Proposition \ref{Propex4.3}, $^{kh}\theta\in \widehat{B_z}_{,\theta}^0$. Then by \eqref{defGamma} the 
pair $(z,\!^{kh}\theta)$ belongs to the $K$-orbit of some (unique) $(z,\chi)\in \Gamma$ (with the same $z$). 
In particular one obtains the implication $\implies$. Note that the reverse implication is obvious.
Let us now prove uniqueness of $\chi$: if 
\[
k,k'\in K,\quad h_0,h_0'\in H_\theta,\quad kh_0y_0=z=k'h_0'y_0,\quad \chi=\!^{kh_0}\theta\quad\text{ and }\quad\chi'=\!^{k'h_0'}\theta
\]
then $k_1\coloneqq k'h_0'h_0^{-1}k^{-1}\in K_z$ and $\;^{k_1}\chi=\chi'$, so that $\chi$ and $\chi'$ belong to the same $K$-orbit in \eqref{defGamma}.
\end{proof}

\begin{remark}\label{defGammatheta}{\rm
For $\theta\in\Theta$ let $\Gamma_\theta$ denote the set of all $(z,\chi)\in\Gamma$ such that the equivalent conditions in 
Lemma \ref{LemmaGammatheta} are satisfied, that is, there exist $k(\chi)\in K$ and $h_0(\chi)\in H_\theta$ such that 
\begin{equation}\label{kchih0chi}
z=k(\chi)h_0(\chi)y_0\quad\text{ and }\quad \chi=\,^{k(\chi)h_0(\chi)}\theta.
\end{equation}

Note that for $(z,\chi)\in\Gamma_\theta$ the element $h(\chi)$ in Remark \ref{Remchitheta} can be chosen of the form
$h(\chi)=k(\chi)h_0(\chi)$ with $k(\chi)\in K$ and $h_0(\chi) \in H_\theta$. Also, observe that $h_0(\chi)$ is defined 
up to left/right multiplication by elements of $K_\theta$: indeed, if 
\[
k,k'\in K,\quad h_0,h_0'\in H_\theta,\quad kh_0y_0=z=k'h_0'y_0\quad\text{ and }\quad \!^{kh_0}\theta=\chi=\!^{k'h_0'}\theta,
\]
then $k_2\coloneqq h_0^{-1}k^{-1}k'h_0'\in K_\theta$ and  $k^{-1}k'=h_0k_2h_0'^{-1}\in K\cap H_\theta=K_\theta$, so that 
$h_0'=(k'^{-1}k)h_0k_2$.  
}
\end{remark}

For $(z,\chi)\in \Gamma$, we define $\Lambda_{z,\chi} \in L(X)$ by setting
\begin{equation}\label{defPsizchi}
\Lambda_{z,\chi}\coloneqq\frac{1}{\lvert K_z\rvert \cdot \lvert C\rvert}\sum_{(k,c)\in K\ltimes C}(k,c)\chi^\sharp=\frac{1}{\lvert K_z\rvert}\sum_{k\in K}\left(\,^k\chi\right)^\sharp,
\end{equation}
where the last equality follows from \eqref{actchisharp} combined with $\chi\in\widehat{B_z}^0$; the normalization factor is chosen in such a way that $\Lambda_{z,\chi}(z,0_{B_z})=1$. 

\begin{theorem}\label{TheoremPsi}
The set $\left\{\Lambda_{z,\chi}\colon(z,\chi)\in \Gamma\right\}$ is an orthogonal basis for the vector space of all
$(K\ltimes C)$-invariant functions in $L(X)$. Moreover, $\Lambda_{z,\chi}$ vanishes outside $\bigsqcup_{y\in Kz}(y,B_y)$ and its norm is
given by
\[
\left\lVert\Lambda_{z,\chi}\right\rVert_{L(X)}=\frac{\sqrt{\lvert K_\chi\rvert\cdot\lvert B\rvert\cdot\lvert K\rvert}}{\lvert K_z\rvert}.
\]
\end{theorem}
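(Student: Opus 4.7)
The plan is to use the orthogonal basis $\{\chi^\sharp : y \in Y,\ \chi \in \widehat{B_y}\}$ of $L(X)$ to verify in turn the assertions on invariance, support, orthogonality, norm, and spanning. The $(K \ltimes C)$-invariance of $\Lambda_{z,\chi}$ is automatic from the first expression in \eqref{defPsizchi}, since it is a sum over $(K \ltimes C)$. The support claim is immediate from the second expression in \eqref{defPsizchi}, since each $({}^k\chi)^\sharp$ is supported on $\{kz\} \times B_{kz}$ and $kz \in Kz$.

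The key preliminary step is to observe that, grouping terms by the $K$-orbit they produce, one has
\[
\Lambda_{z,\chi} = \frac{\lvert K_\chi\rvert}{\lvert K_z\rvert} \sum_{\psi \in {}^K\chi} \psi^\sharp,
\]
because each element of the orbit ${}^K\chi$ arises from exactly $\lvert K_\chi\rvert$ elements of $K$. From this, orthogonality follows at once: by \eqref{defGamma}, distinct pairs $(z,\chi) \in \Gamma$ give disjoint $K$-orbits inside $\bigsqcup_{y \in Y} \widehat{B_y}^0$, and the character functions $\psi^\sharp$ corresponding to distinct pairs $(y,\psi)$ are pairwise orthogonal in $L(X)$. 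The norm computation then combines $\lVert\psi^\sharp\rVert^2 = \lvert B\rvert$ (from \eqref{e:norma-chi-sharp}) with $\lvert{}^K\chi\rvert = \lvert K\rvert/\lvert K_\chi\rvert$ to give
\[
\lVert\Lambda_{z,\chi}\rVert^2 = \left(\frac{\lvert K_\chi\rvert}{\lvert K_z\rvert}\right)^{\!2} \cdot \frac{\lvert K\rvert}{\lvert K_\chi\rvert} \cdot \lvert B\rvert = \frac{\lvert K_\chi\rvert \cdot \lvert B\rvert \cdot \lvert K\rvert}{\lvert K_z\rvert^2},
\]
as asserted.

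To complete the proof, I would show that the $\Lambda_{z,\chi}$ span the space of $(K \ltimes C)$-invariant functions. Expanding an arbitrary such $f \in L(X)$ in the basis $\{\chi^\sharp\}$, the identity $(1_H, c)\chi^\sharp = \overline{\chi}(\pi_y(c))\chi^\sharp$ from \eqref{actchisharp} shows that $C$-invariance of $f$ forces only characters $\chi \in \widehat{B_y}^0$ (those satisfying \eqref{chiCinv}) to contribute. Specializing \eqref{actchisharp} to $a = 0_A$ gives $(k, 0_A)\chi^\sharp = ({}^k\chi)^\sharp$, so $K$-invariance of $f$ forces the expansion coefficients to be constant on $K$-orbits inside $\bigsqcup_{y \in Y} \widehat{B_y}^0$. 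By \eqref{defGamma}, this expresses $f$ as a linear combination of the orbit sums $\sum_{\psi \in {}^K\chi} \psi^\sharp$, hence of the $\Lambda_{z,\chi}$. The main obstacle is purely bookkeeping—keeping straight the interplay between $K_z$ and $K_\chi \leq K_z$, and noting that the simplification of \eqref{defPsizchi} (collapsing the sum over $K \ltimes C$ into one over $K$) crucially uses $\chi \in \widehat{B_z}^0$ via Proposition \ref{Propex4.3}; once this is clear, no deep argument is required.
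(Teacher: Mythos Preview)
Your proposal is correct and follows essentially the same approach as the paper: both arguments use the orthogonal basis $\{\chi^\sharp\}$, reduce to $\widehat{B_y}^0$ via $C$-invariance, and then use $K$-invariance to force coefficients constant on $K$-orbits. Your norm computation via the grouped form $\Lambda_{z,\chi}=\frac{\lvert K_\chi\rvert}{\lvert K_z\rvert}\sum_{\psi\in{}^K\chi}\psi^\sharp$ is in fact cleaner than the paper's direct expansion, but the underlying idea is the same.
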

\begin{proof}
The fact that each $\Lambda_{z,\chi}$ is $(K\ltimes C)$-invariant follows immediately from its averaging expression.
From \eqref{actchisharp} it follows that $(1_K,a)\chi^\sharp=\overline{\chi}(\pi_y(a))\chi^\sharp$, for all $a\in A$. 
As a consequence, the $\chi^\sharp$'s yield a decomposition of $L(X)$ into one-dimensional
(and therefore irreducible) representations of $A$ (and therefore of $C$). 
Let $f\in L(X)$. Suppose that $f$ is $(K\ltimes C)$-invariant. As $f$ is, in particular, $C$-invariant, it is necessarily expressed as 
a linear combination of $C$-invariant $\chi^\sharp$s (in other words, $f$ belongs to the isotypic component of the 
trivial representation of $C$). Thus, if $\mathcal{T}_\chi$ denotes a complete set of representatives for the left cosets of 
$K_\chi$ in $K$ we have
\[
f=\sum_{(z,\chi)\in \Gamma}\sum_{k\in \mathcal{T}_\chi}\alpha(z,\chi,k)\left(\,^k\chi\right)^\sharp
\]
for suitable coefficients $\alpha(z,\chi,k)\in \mathbb{C}$. Moreover,
\begin{align*}
\sum_{(k',c)\in K\ltimes C}(k',c)f& = \lvert C \rvert\sum_{(z,\chi)\in \Gamma}\sum_{k\in \mathcal{T}_\chi}\alpha(z,\chi,k)\sum_{k'\in K}\left(\,^{k'k}\chi\right)^\sharp&(\text{by }\eqref{actchisharp}\text{ and }\eqref{chiCinv} )\\
& = \lvert C \rvert\cdot \left\lvert K_\chi\right\rvert\sum_{(z,\chi)\in \Gamma}\left(\sum_{k\in \mathcal{T}_\chi}\alpha(z,\chi,k)\right)\sum_{k_1\in \mathcal{T}_\chi}\left(\,^{k_1}\chi\right)^\sharp.&(k_1=k'k)
\end{align*}
Since $\frac{1}{\lvert K\ltimes C\rvert}\sum_{(k',c)\in K\ltimes C}(k',c)$ represents the orthogonal projection onto the vector space
of all $(K\ltimes C)$-invariant vectors, then $f$ is $(K\ltimes C)$-invariant if and only if
\[
f=\frac{1}{\lvert K\ltimes C\rvert}\sum_{(k',c)\in K\ltimes C}(k',c)f,
\]
that is, if and only if
\[
\alpha(z,\chi,k)=\frac{1}{\left\lvert \mathcal{T}_\chi\right\rvert}\sum_{k'\in \mathcal{T}_\chi}\alpha(z,\chi,k')
\]
so that $\beta(z,\chi)\coloneqq\alpha(z,\chi,k)$ does not depend on $k\in \mathcal{T}_\chi$. Therefore if $f$ is $(K\ltimes C)$-invariant
we can write
\[
f=\sum_{(z,\chi)\in \Gamma}\beta(z,\chi)\sum_{k\in \mathcal{T}_\chi}\left(\,^k\chi\right)^\sharp=\sum_{(z,\chi)\in \Gamma}\frac{\lvert K_z\rvert\beta(z,\chi)}{\left\lvert K_\chi\right\rvert}\Lambda_{z,\chi} \qquad(\text{by }\eqref{defPsizchi})
\]
and this proves that $\Lambda_{z,\chi}$'s span the $(K\ltimes C)$-invariant functions in $L(X)$. It follows from its definition 
\eqref{defPsizchi} and the expressions in \eqref{chisharp} and \eqref{defhtransl2} that $\Lambda_{z,\chi}$ vanishes outside 
$\bigsqcup_{y\in Kz}(y,B_y)$. Therefore, if $(z,\chi), (z',\chi')\in \Gamma$ and $z\neq z'$ then $\Lambda_{z,\chi}$ and 
$\Lambda_{z',\chi'}$, having disjoint supports, are orthogonal. 
If $z=z'$ but $\chi\neq\chi'$ then $\Lambda_{z,\chi}$ and $\Lambda_{z,\chi'}$ are orthogonal by \eqref{defGamma}
and the orthogonal relations of characters.

Finally, we compute the norm:
\[
\begin{split}
\left\lVert\Lambda_{z,\chi}\right\rVert_{L(X)}^2& = \sum_{(y,b)\in X}\lvert\Lambda_{z,\chi}(y,b)\rvert^2\\
& = \sum_{y\in Kz}\;\sum_{b\in B_y}\frac{1}{\lvert K_z\rvert^2}\sum_{k',k\in K}\left( ^{k'}\chi\right)^\sharp(y,b)\cdot\left(\,^k\overline{\chi}\right)^\sharp(y,b)\\
(\text{by }\eqref{chisharp}\text{ and }\eqref{defhtransl2})\quad& = \sum_{y\in Kz}\;\sum_{b\in B_y}\frac{1}{\lvert K_z\rvert^2}\sum_{\substack{k',k\in K\colon\\ k'z=kz=y}} \ ^{k'}\chi(b)\cdot\!^k\overline{\chi}(b)\\
(b'=k^{-1}b)\quad& = \sum_{y\in Kz}\frac{1}{\lvert K_z\rvert^2}\sum_{\substack{k',k\in K\colon\\ k'z=kz=y}}\sum_{b'\in B_z}\,^{k'}\chi(kb')\cdot\!^k\overline{\chi}(kb')\\
& = \frac{1}{\lvert K_z\rvert^2}\sum_{\substack{k',k\in K\colon\\ k^{-1}k'z=z}}\left\langle\,^{k^{-1}k'}\chi,\,\chi\right\rangle_{L(B_z)}\\
& = \frac{\lvert B\rvert}{\lvert K_z\rvert^2}\left\lvert\left\{(k',k)\in K\times K\colon k^{-1}k'\in K_\chi\right\}\right\rvert\\
& = \frac{\lvert K\rvert\cdot\lvert K_\chi\rvert\cdot\lvert B\rvert}{\lvert K_z\rvert^2}.
\end{split}
\]
\end{proof}

Evaluating the elements in \eqref{defPsizchi} on the representatives \eqref{defGamma0} we obtain (cf.\ \eqref{chisharp})
\begin{equation}\label{calcPsizchi}
\Lambda_{z,\chi}(z',d)=\delta_{z,z'}\frac{1}{\lvert K_z\rvert}\sum_{k\in K}\left(\,^k\chi\right)^\sharp(z,d)=\delta_{z,z'}\frac{1}{\lvert K_z\rvert}\sum_{k\in K_z}\chi(k^{-1}d)
\end{equation}
for all $(z,\chi)\in \Gamma$ and $(z',d)\in \Gamma_0$.

\section{The spherical functions}\label{Secsphfunc}

In the present section, we assume that the conditions in Corollary \ref{Cormultfree} are satisfied: we then use the notations therein. 
Let $\theta \in \Theta$. For $j=0,1,\dotsc, n_\theta$, let $u_{\theta,j} \in U_{\theta,j}$  be a unitary $K_\theta$-invariant vector in the representation space of $\sigma_{\theta_j}$, so that the spherical function associated with this irreducible $H_\theta$-representation is given by
\begin{equation}\label{squarep14}
\varphi_{\theta,j}(h)=\langle u_{\theta,j},\sigma_{\theta,j}(h)u_{\theta,j}\rangle, \qquad h\in H_\theta
\end{equation}
see \cite[Corollary 4.6.4]{book}. Moreover, the operator $T_{\theta,j}$ that interwines $U_{\theta,j}$ with $L(H_\theta/K_\theta)$ is given by (cf. again \cite[Corollary 4.6.4]{book})
\begin{equation}\label{squarep14bis}
\left[T_{\theta,j}u\right](h)=\sqrt{\frac{\lvert K_\theta\rvert\dim\sigma_{\theta,j}}{\lvert H_\theta\lvert}}\langle u,\sigma_{\theta,j}(h)u_{\theta,j}\rangle,\qquad u\in U_{\theta,j},\,h\in H_\theta.
\end{equation}

In the following remark, we estabilish some elementary group theoretical counting arguments.

\begin{remark}\label{Remhk1k2}{\rm
\begin{enumerate}
\item\label{Remhk1k21}
Let $k_1,k_2\in K$ and $h_0\in H$. Then there exists $h\in H$ such that $k_1\in hH_\theta$ and $k_2\in h_0^{-1}hH_\theta$ if and only if $k_2^{-1}h_0^{-1}k_1\in H_\theta$, and if this is the case then the number of these $h$s is equal to $\lvert H_\theta\rvert$.
Proof of the ``only if'' part: $k_1=hh_1, k_2=h_0^{-1}hh_2$ with $h\in H,h_1,h_2\in H_\theta$ $\Longrightarrow$ $k_2^{-1}h_0^{-1}k_1=h_2^{-1}h^{-1}h_0\cdot h_0^{-1}\cdot hh_1=h_2^{-1}h_1\in H_\theta$. Proof of the ``if'' part: take $h\in k_1H_\theta$ so that $k_1\in hH_\theta$ and $k_2\in h_0^{-1}k_1H_\theta=h_0^{-1}hH_\theta$.

\item\label{Remhk1k22}
Given $h_0\in H$, the number of pairs $(k_1,k_2)\in K \times K$ such that $k_2^{-1}h_0^{-1}k_1\in H_\theta$ is equal to $\lvert(Kh_0K)\cap H_\theta\rvert\cdot \lvert K\cap h_0 Kh_0^{-1}\rvert$. Indeed, if $h'\in (Kh_0K)\cap H_\theta$ then 
there are $\lvert K\cap h_0 Kh_0^{-1}\rvert$ pairs $(k_1,k_2)\in K \times K$ such that $k_2^{-1}h_0^{-1}k_1=h'$ (cf. \cite[Remark 11.3.2]{book4}).
\end{enumerate}}
\end{remark}

Let $\mathcal{T}_\theta$ (resp.\ $\mathcal{S}_\theta$) denote a complete set of representatives for the left cosets of $H_\theta$ in $H$ 
(resp.\ of $K_\theta$ in $H_\theta$). Thus, $H=\sqcup_{t\in\mathcal{T}_\theta}tH_\theta$ (resp.\ 
$H_\theta=\sqcup_{s\in\mathcal{S}_\theta}sK_\theta$). We also have $H=\sqcup_{t\in\mathcal{T}_\theta}\sqcup_{s\in\mathcal{S}_\theta}tsK_\theta$ and therefore, for all $y\in Y$ and $\chi\in\widehat{B_y}$ there exist unique $t(\chi)\in\mathcal{T}_\theta$, $s(\chi)\in\mathcal{S}_\theta$, and $k(\chi)\in K_\theta$ such that
\begin{equation}\label{hchith1}
h(\chi)=t(\chi)s(\chi)k(\chi),
\end{equation}
where $h(\chi)$ is as in Remark \ref{Remchitheta0}. 

\begin{theorem}\label{Teospherfunct}
\begin{enumerate}[{\rm (1)}]
\item
For $\theta\in\Theta$, $0\leq j\leq n_\theta$, $h\in H$, and $a\in A$ set
\begin{equation}\label{defF1}
F_{\theta,j}(h,a)\coloneqq \frac{^h\overset{\triangle}{\theta}(a)}{\sqrt{\lvert K\rvert \cdot \lvert K_\theta\rvert}}\sum_{h_0\in (h^{-1}K)\cap H_\theta}\sigma_{\theta,j}(h_0^{-1})^*u_{\theta,j}.
\end{equation}
Then, $F_{\theta,j}$ is a unitary $(K\ltimes C)$-invariant vector in $W_{\theta,j}\coloneqq\Ind_{H_\theta\ltimes A}^{H\ltimes A}\left(\widetilde{\overline{\theta}}\otimes\overset{\triangle}{\sigma_{\theta,j}}\right)$. 

\item The spherical function associated with $F_{\theta,j}$ has the expression

\begin{equation}\label{sphfunct}
\Phi_{\theta,j}=\frac{\lvert K_\theta\rvert}{\lvert K\rvert}\sum_{(z,\chi)\in\Gamma_\theta}\frac{\lvert K_z\rvert}{\lvert K_\chi\rvert}\varphi_{\theta,j}\left(h_0(\chi)\right)\Lambda_{z,\chi},
\end{equation}
where $\Gamma_\theta$ and $h_0(\chi)$ are as in Remark \ref{defGammatheta}, $\Lambda_{z,\chi}$ is as in \eqref{defPsizchi}, 
and $\varphi_{\theta,j}$ is as in \eqref{squarep14}.

\item\label{Teospherfunct3} The intertwining operator $S_{\theta,j}\colon W_{\theta,j}\rightarrow L(X)$ associated with $F_{\theta,j}$ is given by
\begin{equation}\label{defStheta}
S_{\theta,j}F_f=\frac{1}{\sqrt{\lvert B\rvert}}\sum_{y\in Y}\sum_{\chi\in\widehat{B_y}_{,\theta}}\left[T_{\theta,j}f(t(\chi))\right](s(\chi))\,\chi^\sharp, \ \ \mbox{ for all } f\in \Ind_{H_\theta}^HU_{\theta,j},
\end{equation}
where $F_f$ is as in \eqref{indrephatris}, $\widehat{B_y}_{,\theta}$ and $h(\chi)$ are as in Remark \ref{Remchitheta0}, 
$t(\chi)$ and $s(\chi)$ are as in \eqref{hchith1}, and $\chi^\sharp$ is as in \eqref{chisharp}.

\end{enumerate}
\end{theorem}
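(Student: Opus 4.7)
The strategy is to realize $F_{\theta,j}$ in the form $F_f$ provided by Proposition \ref{Propindrep}, by setting
\[
f(h)\coloneqq\frac{1}{\sqrt{\lvert K\rvert\cdot\lvert K_\theta\rvert}}\sum_{h_0\in(h^{-1}K)\cap H_\theta}\sigma_{\theta,j}(h_0^{-1})^{*}u_{\theta,j},
\]
and then to derive all three statements from explicit manipulations with $f$. A reindexing $h_0\mapsto h'h_0$ shows that $f\in\Ind_{H_\theta}^{H}U_{\theta,j}$, and the $K_\theta$-invariance of $u_{\theta,j}$ collapses the sum to the compact form $f(kh_0)=\sqrt{\lvert K_\theta\rvert/\lvert K\rvert}\,\sigma_{\theta,j}(h_0)^{*}u_{\theta,j}$ for $k\in K$ and $h_0\in H_\theta$, while $f$ vanishes off $KH_\theta$; this explicit description is the workhorse for the rest.

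For part (1), I would establish $(K\ltimes C)$-invariance via \eqref{indrephaaction}: the identity $(k,c)F_f=F_{\overline{\theta}^{\flat}(c)\cdot[kf]}$ reduces to $[kf]=f$ (immediate from $kK=K$) and $\overline{\theta}^{\flat}(c)\cdot f=f$, the latter amounting to $\theta(\pi(h^{-1}c))=1$ whenever $f(h)\neq 0$. Writing $h^{-1}=h_0k^{-1}$ with $h_0\in H_\theta$ and $k\in K$, the $H_\theta$-invariance of $\overset{\triangle}{\theta}$, combined with the $K$-invariance of $C$ and the equality $C=\ker\pi$, yields the needed vanishing. Unitarity then follows from the induced-representation norm \eqref{scalprodf1f2} together with the cardinality identity $\lvert KH_\theta\rvert=\lvert K\rvert\cdot\lvert H_\theta\rvert/\lvert K_\theta\rvert$.

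For part (2), the spherical function at $(y,b)=(h,a)(y_0,0_B)$ is $\Phi_{\theta,j}(y,b)=\langle F_{\theta,j},(h,a)F_{\theta,j}\rangle$; combining \eqref{scalprodf1f2} and \eqref{indrephaaction} and using the right $H_\theta$-equivariance of $f$ to factor the $H$-sum into cosets yields
\[
\Phi_{\theta,j}(h,a)=\sum_{t\in\mathcal{T}_\theta}{}^{t}\overset{\triangle}{\theta}(a)\,\langle f(t),f(h^{-1}t)\rangle,
\]
and substituting the explicit $f$ converts each nonzero summand into $\tfrac{\lvert K_\theta\rvert}{\lvert K\rvert}\varphi_{\theta,j}(k_t^{-1}hk_t')$, where $k_t,k_t'\in K$ come from the factorizations $t=k_th_{0,t}$ and $h^{-1}t=k_t'h_{0,t}'$. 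The decisive step is to regroup the $t$-sum according to the $K$-orbit of $ty_0$: only the orbits $Kz$ with $Kz\cap H_\theta y_0\neq\emptyset$ contribute, and by Lemma \ref{LemmaGammatheta} each such $z$ is paired with a unique character $\chi_z\in\widehat{B_z}^{0}_{,\theta}$ for which the restricted character sum matches, via \eqref{calcPsizchi}, the evaluation of $\Lambda_{z,\chi_z}$ on the orbit representative $(z,d)\in\Gamma_0$. The counting identities of Remark \ref{Remhk1k2} then pin down the coefficient $\lvert K_\theta\rvert\,\lvert K_z\rvert/(\lvert K\rvert\,\lvert K_{\chi_z}\rvert)$ appearing in \eqref{sphfunct}.

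For part (3), I would take \eqref{defStheta} as the definition of $S_{\theta,j}$ and verify in order: (a) well-definedness, i.e.\ independence of the choice of $h(\chi)$ modulo $K_\theta$, following from the right $K_\theta$-invariance built into $T_{\theta,j}$ through the $K_\theta$-fixed vector $u_{\theta,j}$; (b) $(H\ltimes A)$-equivariance, obtained by transporting the action on $W_{\theta,j}$ described in Proposition \ref{Propindrep} through the transformation law \eqref{actchisharp} for the $\chi^\sharp$; and (c) the compatibility $S_{\theta,j}(F_{\theta,j})=\Phi_{\theta,j}$, which, after substituting the explicit $f$, reduces to matching the Fourier coefficients of $\Phi_{\theta,j}$ in the character basis of each $B_z$ with those produced by \eqref{defStheta}. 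The main obstacle throughout is the combinatorial bookkeeping of orbits, cosets, and character translates; the decisive simplification is the uniqueness clause in Lemma \ref{LemmaGammatheta}, which collapses what would otherwise be double orbit-character sums into single terms indexed by $(z,\chi_z)\in\Gamma_\theta$.
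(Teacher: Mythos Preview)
Your part~(1) matches the paper's; the compact form of $f$ you identify in fact simplifies the norm computation, which the paper instead carries out via the coset counting of Remark~\ref{Remhk1k2}. For part~(2), you and the paper reach the same intermediate expression for $\overset{\triangle}{\Phi}_{\theta,j}(h,a)$, but then the paper computes the coefficients $\langle\Phi_{\theta,j},\Lambda_{z,\chi}\rangle_{L(X)}$ directly (using \eqref{hh1chitheta} to collapse the sum over $A$) and expands in the orthogonal basis of Theorem~\ref{TheoremPsi}, whereas you propose to evaluate at orbit representatives and match via \eqref{calcPsizchi}; both are correct, though your matching step still requires \eqref{hh1chitheta} or its equivalent to convert ${}^t\overset{\triangle}{\theta}(a)$ into the character sum appearing in $\Lambda_{z,\chi}(z,d)$. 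For part~(3) the approaches genuinely differ: the paper \emph{derives} \eqref{defStheta} from the general intertwiner formula $[S_{\theta,j}F](g)=c\,\langle g^{-1}F,F_{\theta,j}\rangle$ of \cite[Corollary~4.6.4]{book}, computing each $\langle S_{\theta,j}F_f,\chi^\sharp\rangle$ explicitly; you instead take \eqref{defStheta} as a definition and verify well-definedness, equivariance, and the normalization $S_{\theta,j}F_{\theta,j}=\Phi_{\theta,j}$. Your route is valid---irreducibility and multiplicity one force uniqueness of the intertwiner up to scalar, and condition~(c) fixes that scalar---but the equivariance check~(b) requires essentially the same character/coset bookkeeping as the paper's derivation, so neither route is appreciably shorter.
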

\begin{proof}
(1) First of all, we note that $F_{\theta,j}\in W_{\theta,j}$. Indeed, by \eqref{indrephatris} it suffices to observe that the function 
$f_{\theta,j} \colon H \to U_{\theta, j}$ defined by setting
\begin{equation}\label{deff1}
\begin{split}
f_{\theta,j}(h)\coloneqq&\frac{1}{\sqrt{\lvert K\rvert \cdot \lvert K_\theta\rvert}}\sum_{h_0\in (h^{-1}K)\cap H_\theta}\sigma_{\theta,j}(h_0^{-1})^*u_{\theta,j}\\
(k=hh_0)\quad\equiv&\frac{1}{\sqrt{\lvert K\rvert \cdot \lvert K_\theta\rvert}}\sum_{k\in K\cap (hH_\theta)}\sigma_{\theta,j}(k^{-1}h)^*u_{\theta,j},
\end{split}
\end{equation}
for all $h \in H$, belongs to the space of $\Ind_{H_\theta}^H\sigma_{\theta,j}$ and that $F_{\theta,j}=F_{f_{\theta,j}}$. 
Note also that, from the condition $K\cap (hH_\theta)\neq\emptyset$, it follows that $f_{\theta,j}$ vanishes outside $KH_\theta$. 
Then, by \eqref{scalprodf1f2} and \cite[(11.3)]{book4},
\[
\begin{split}
\lVert F_{\theta,j}\rVert^2=\lVert f_{\theta,j}\rVert^2\qquad\qquad\;&\\
(\text{by }\eqref{deff1})\quad=\frac{1}{\lvert K\rvert\cdot\lvert K_\theta\rvert\cdot\lvert H_\theta\rvert}\sum_{h\in H}\;&\sum_{h_0,h_1\in (h^{-1}K)\cap H_\theta }
\left\langle \sigma_{\theta,j}(h_0^{-1})^*u_{\theta,j},\sigma_{\theta,j}(h_1^{-1})^*u_{\theta,j}\right\rangle\\
=\frac{1}{\lvert K\rvert\cdot\lvert K_\theta\rvert\cdot\lvert H_\theta\rvert}\sum_{h\in H}\;&\sum_{h_0,h_1\in (h^{-1}K)\cap H_\theta }\left\langle u_{\theta,j},\sigma_{\theta,j}(h_0^{-1}h_1)u_{\theta,j}\right\rangle\\
(h_0=h^{-1}k_0,h_1=h^{-1}k_1)\quad& = \frac{1}{\lvert K\rvert\cdot\lvert K_\theta\rvert\cdot\lvert H_\theta\rvert}\sum_{h\in H}\sum_{k_1,k_2\in K\cap (hH_\theta)}\langle u_{\theta,j},\sigma_{\theta,j}(k_0^{-1}k_1)u_{\theta,j}\rangle\\
(\text{by Remark }\ref{Remhk1k2}.\ref{Remhk1k21}, h_0=1)\quad& = \frac{1}{\lvert K\rvert\cdot\lvert K_\theta\rvert}\sum_{\substack{k_1,k_2\in K\colon\\ k_1^{-1}k_2\in K_\theta}}\langle u_{\theta,j},\sigma_{\theta,j}(k_1^{-1}k_2)u_{\theta,j}\rangle\\
(\text{by Remark }\ref{Remhk1k2}.\ref{Remhk1k22}, h_0=1)\quad& = 1.
\end{split}
\]

Finally, suppose that $(k_0,c)\in K\ltimes C$ and $(h,a)\in H\ltimes A$. From \eqref{deff1} and $\left(h^{-1}k_0K\right)\cap H_\theta=\left(h^{-1}K\right)\cap H_\theta$ it follows that $k_0f_{\theta,j}=f_{\theta,j}$ and, in the notation of Proposition \ref{Propindrep}, 
for $h=kh'$ with $k\in K$ and $h'\in H_\theta$, we have 
\begin{equation}
\label{e:pasqua}
\left[\left(\overset{\triangle}{\overline{\theta}}\right)^\flat(c)\right](h) = \overset{\triangle}{\overline{\theta}}(h'^{-1}k^{-1}c) =
\overset{\triangle}{\overline{\theta}}(k^{-1}c)=1.
\end{equation}
Therefore, from \eqref{indrephaaction} we deduce that
\[
(k_0,c)F_{\theta,j}\equiv(k_0,c)F_{f_{\theta,j}}=F_{\left(\overset{\triangle}{\overline{\theta}}\right)^\flat(c)\cdot(k_0f_{\theta,j})}=F_{\theta,j},
\]
where in the last equality we used \eqref{e:pasqua} (which only holds for $h \in KH_\theta$) combined with the fact that
$f_{\theta,j}$ vanishes outside $KH_\theta$. This ends the proof that $F_{\theta,j}$ is a unitary $(K\ltimes C)$-invariant vector.
\par
(2) The spherical function $\Phi_{\theta,j}$ associated with $F_{\theta,j}$ may be computed by means of \cite[Corollary 4.6.4]{book} and \cite[(11.3)]{book4} (and taking into account \eqref{defftilde}): for all $h_0\in H$ and $a_0\in A$ we have
\begin{align*}
&\qquad\qquad\qquad\overset{\triangle}{\Phi}_{\theta,j}(h_0,a_0)=\left\langle F_{\theta,j},(h_0,a_0)F_{\theta,j}\right\rangle_{W_{\theta,j}}\\
&(\text{by } \eqref{indrephaaction} \text{ and } \eqref{scalprodf1f2})\quad=\left\langle f_{\theta,j},\left(\overset{\triangle}{\overline{\theta}}\right)^\flat(a_0)\cdot\left(h_0f_{\theta,j}\right)\right\rangle_{\Ind_{H_\theta}^H\sigma_{\theta,j}}\\
&(\text{by } \eqref{defpsiflat})\quad=\frac{1}{\lvert H_\theta\rvert}\sum_{h\in H}\theta(\pi(h^{-1}a_0))\cdot\left\langle f_{\theta,j}(h),f_{\theta,j}(h_0^{-1}h)\right\rangle_{U_{\theta,j}}\\
&(\text{by }\eqref{deff1})\;=\frac{1}{\lvert K\rvert\cdot\lvert K_\theta\rvert\cdot\lvert H_\theta\rvert}\sum_{h\in H}\sum_{\substack{k_1\in K\cap (hH_\theta)\\k_2\in K\cap (h_0^{-1}hH_\theta)}}\theta(\pi(k_2^{-1}h_0^{-1}a_0))\cdot\\
&\qquad\qquad\quad\cdot\left\langle \sigma_{\theta,j}(k_1^{-1}h)^*u_{\theta,j} ,\sigma_{\theta,j}(k_2^{-1}h_0^{-1}h)^*u_{\theta,j} \right\rangle_{U_{\theta,j}}\\
&(\text{by Remark }\ref{Remhk1k2}.\ref{Remhk1k21})\;=\frac{1}{\lvert K\rvert\cdot\lvert K_\theta\rvert}\sum_{\substack{k_1,k_2\in K\colon \\k_1^{-1}h_0k_2\in H_\theta}}\;^{h_0k_2}\overset{\triangle}{\theta}(a_0)\cdot\\
&\qquad\qquad\qquad\qquad\qquad\cdot\langle u_{\theta,j} ,\sigma_{\theta,j}(k_1^{-1}h_0k_2)u_{\theta,j} \rangle_{U_{\theta,j}}\\
&(\text{by }\eqref{squarep14})\quad=\frac{1}{\lvert K\rvert\cdot\lvert K_\theta\rvert}\sum_{\substack{k_1,k_2\in K\colon\\k_1^{-1}h_0k_2\in H_\theta}}
\;^{h_0k_2}\overset{\triangle}{\theta}(a_0)\varphi_{\theta,j}(k_1^{-1}h_0k_2).
\end{align*}
We have obtained an intermediate expression of the spherical functions as bi-$(K\ltimes C)$-invariant functions defined on $(H\ltimes A)$. We now use the basis in Theorem \ref{TheoremPsi} to get an expression of these spherical functions as elements in $L(X)$. 
We have (cf.\ \eqref{defftilde} and \eqref{defPsizchi}):
\[
\begin{split}
\left\langle\overset{\triangle}{\Phi}_{\theta,j},\overset{\triangle}{\Lambda}_{z,\chi}\right\rangle_{L(H\ltimes A)}& = \frac{1}{\lvert K\rvert\cdot\lvert K_\theta\rvert\cdot\lvert K_z\rvert}\sum_{h\in H, a\in A}\sum_{\substack{k_1,k_2\in K\colon\\k_1^{-1}hk_2\in H_\theta}}\sum_{k\in K}
\;^{hk_2}\overset{\triangle}{\theta}(a)\cdot\\
&\qquad\qquad\qquad\qquad\cdot\varphi_{\theta,j}(k_1^{-1}hk_2)\overset{\displaystyle\triangle}{\left(\,^k\overline{\chi}\right)^\sharp}(h,a)\\
(\text{by }\eqref{chitilde})\quad& = \frac{1}{\lvert K\rvert\cdot\lvert K_\theta\rvert\cdot\lvert K_z\rvert}\sum_{k\in K}\sum_{\substack{h\in H\colon\\ hy_0=kz}}\sum_{\substack{k_1,k_2\in K\colon\\k_1^{-1}hk_2\in H_\theta}}\varphi_{\theta,j}(k_1^{-1}hk_2)\cdot\\
&\qquad\qquad\qquad\qquad\cdot\sum_{a\in A}\;^{hk_2}\overset{\triangle}{\theta}(a)\overset{\displaystyle\triangle}{\left(\,^k\overline{\chi}\right)^\sharp}(h,a) \\
(\text{by }\eqref{hh1chitheta}, h=h_0)\quad& = \frac{\lvert A\rvert}{\lvert K\rvert\cdot\lvert K_\theta\rvert\cdot\lvert K_z\rvert}\sum_{k\in K}\sum_{\substack{h\in H\colon\\ hy_0=kz}}\sum_{\substack{k_2\in K\colon\\ \;^{k_2}\theta=\,^{h^{-1}k}\chi}}\;\sum_{\substack{k_1\in K\colon\\k_1^{-1}hk_2\in H_\theta}}\varphi_{\theta,j}(k_1^{-1}hk_2) \\
\end{split}
\]
and this is zero unless $\chi\in\widehat{B_z}^0_{,\theta}$ (cf. Remark \ref{Remchitheta}). If this is the case, then
\begin{itemize}
\item all the $h$'s in the second sum are of the form $h=kh(\chi)k_3$, with $k_3\in K$ and $h(\chi)$ as in Remark \ref{Remchitheta0} and Remark \ref{Remchitheta};
\item in the third sum $k_2=k_3^{-1}k'$, with $k'\in K_\theta$;
\item in the fourth sum we have $k_1^{-1}hk_2=k_1^{-1}kh(\chi)k'\in H_\theta$ if and only if $k_1\in kh(\chi)H_\theta$. 
\end{itemize}
\noindent
Therefore,
\[
\begin{split}
\left\langle\overset{\triangle}{\Phi}_{\theta,j},\overset{\triangle}{\Lambda}_{z,\chi}\right\rangle_{L(H\ltimes A)}& = \frac{\lvert A\rvert}{\lvert K_z\rvert}\sum_{k\in K}\;\sum_{\substack{k_1\in \left(kh(\chi)H_\theta\right)\cap K}}\varphi_{\theta,j}(k_1^{-1}kh(\chi))\\
(k_0\coloneqq k^{-1}k_1)\quad& = \frac{\lvert A\rvert\cdot\lvert K\rvert}{\lvert K_z\rvert}\sum_{\substack{k_0\in \left(h(\chi)H_\theta\right)\cap K}}\varphi_{\theta,j}(k_0^{-1}h(\chi)). \\
\end{split}
\]
But $\left(h(\chi)H_\theta\right)\cap K\neq\emptyset$ if and only if $h(\chi)=kh_1$ with $k\in K$ and $h_1\in H_\theta$ so that, by Remark \ref{defGammatheta}, necessarily $(z,\chi)\in \Gamma_\theta$ and $h(\chi)=k(\chi)h_0(\chi)$, as in \eqref{kchih0chi}. It follows that 
\[
\left(h(\chi)H_\theta\right)\cap K=\left(k(\chi)H_\theta\right)\cap K=k(\chi)K_\theta
\]
and setting $k' \coloneqq k(\chi)^{-1}k_0$, we have $k'\in K_\theta$ and $k_0^{-1}h(\chi)=k'^{-1}h_0(\chi)$, so that
\[
\begin{split}
\left\langle\overset{\triangle}{\Phi}_{\theta,j},\overset{\triangle}{\Lambda}_{z,\chi}\right\rangle_{L(H\ltimes A)}&=\frac{\lvert A\rvert\cdot\lvert K\rvert}{\lvert K_z\rvert}\sum_{\substack{k'\in K_\theta}}\varphi_{\theta,j}(k'^{-1}h_0(\chi)). \\
&=\frac{\lvert A\rvert\cdot\lvert K\rvert\cdot\lvert K_\theta\rvert}{\lvert K_z\rvert}\varphi_{\theta,j}\left(h_0(\chi)\right).
\end{split}
\]
This gives us
\[
\left\langle\Phi_{\theta,j},\Lambda_{z,\chi}\right\rangle_{L(X)}=\frac{\lvert B\rvert\cdot\lvert K_\theta\rvert}{\lvert K_z\rvert}\varphi_{\theta,j}\left(h_0(\chi)\right).
\]

Finally, from the orthogonality relations in Theorem \ref{TheoremPsi} we deduce that
\[
\begin{split}
\Phi_{\theta,j}& = \sum_{(z,\chi)\in\Gamma_\theta}\frac{\lvert K_z\rvert^2}{\lvert K\rvert\cdot\lvert K_\chi\rvert\cdot\lvert B\rvert}\left\langle\Phi_{\theta,j},\Lambda_{z,\chi}\right\rangle_{L(X)}\Lambda_{z,\chi}\\
& = \sum_{(z,\chi)\in\Gamma_\theta}\frac{\lvert K_z\rvert\cdot\lvert K_\theta\rvert}{\lvert K\rvert\cdot\lvert K_\chi\rvert}\varphi_{\theta,j}\left(h_0(\chi)\right)\Lambda_{z,\chi}.
\end{split}
\]

(3) From \cite[Corollary 4.6.4]{book}, \cite[(11.3)]{book4}, \eqref{defftilde}, and \eqref{dimind} we get, for all $h_0\in H, a_0\in A$, 
and $F_f$ as in \eqref{indrephatris} (with $f\in \Ind_{H_\theta}^HU_{\theta,j}$),
\[
\begin{split}
\left[\overset{\displaystyle\triangle}{S_{\theta,j}F_f}\right](h_0,a_0)
& = \sqrt{\frac{\dim\sigma_{\theta,j}\lvert K\rvert\cdot\lvert C\rvert}{\lvert A\rvert\cdot\lvert H_\theta\rvert}}\langle (h_0,a_0)^{-1}F_f,F_{\theta,j}\rangle_{W_{\theta,j}}\\
(\text{by }\eqref{indrephaaction} \text{ and }\eqref{scalprodf1f2})\quad& = \sqrt{\frac{\dim\sigma_{\theta,j}\lvert K\rvert\cdot\lvert C\rvert}{\lvert A\rvert\cdot\lvert H_\theta\rvert}}
\left\langle \left(\overset{\triangle}{\theta}\right)^\flat(h_0^{-1}a_0)\cdot(h_0^{-1}f),f_{\theta,j}\right\rangle_{{\rm Ind}_{H_\theta}^H\sigma_{\theta,j}}\\
(\text{by }\eqref{defpsiflat}\text{ and }\eqref{deff1})\quad& = \frac{\sqrt{\dim\sigma_{\theta,j}\lvert C\rvert}}{\lvert H_\theta\rvert^{3/2}\sqrt{\lvert A\rvert\cdot\lvert K_\theta\rvert}}\sum_{h\in H}\sum_{k\in K\cap\left(hH_\theta\right)}\;^{h_0h}\overset{\triangle}{\theta}(a_0)\cdot\\
&\qquad\cdot\left\langle f(h_0h),\sigma_{\theta,j}(k^{-1}h)^*u_{\theta,j}\right\rangle_{U_{\theta,j}}\\
(\text{by }\eqref{timessquarep2}\text{ and }h_1\coloneqq h^{-1}k)\quad& = \frac{\sqrt{\dim\sigma_{\theta,j}\lvert C\rvert}}{\lvert H_\theta\rvert^{3/2}\sqrt{\lvert A\rvert\cdot\lvert K_\theta\rvert}}\sum_{h_1\in H_\theta,k\in K}
\;^{h_0kh_1^{-1}}\overset{\triangle}{\theta}(a_0)\langle f(h_0k),u_{\theta,j}\rangle_{U_{\theta,j}}\\
& = \frac{\sqrt{\dim\sigma_{\theta,j}\lvert C\rvert}}{\sqrt{\lvert H_\theta\rvert\cdot\lvert A\rvert\cdot\lvert K_\theta\rvert}}\sum_{k\in K}
\;^{h_0k}\overset{\triangle}{\theta}(a_0)\langle f(h_0k),u_{\theta,j}\rangle_{U_{\theta,j}}.\\
\end{split}
\]
Then for all $\chi\in\widehat{B_y}, y\in Y$, we have
\[
\begin{split}
\left\langle \overset{\displaystyle \triangle}{S_{\theta,j}F_f},\overset{\triangle}{\chi^\sharp}\right\rangle_{L(H\ltimes A)}& = \frac{\sqrt{\dim\sigma_{\theta,j}\lvert C\rvert}}{\sqrt{\lvert H_\theta\rvert\cdot\lvert A\rvert\cdot\lvert K_\theta\rvert}}\sum_{a\in A, h\in H}\sum_{k\in K}
\;^{hk}\overset{\triangle}{\theta}(a)\langle f(hk),u_{\theta,j}\rangle_{U_{\theta,j}}\overset{\triangle}{\overline{\chi}^\sharp}(a,h)\\
(\text{by }\eqref{chitilde})\quad& = \frac{\sqrt{\dim\sigma_{\theta,j}\lvert C\rvert}}{\sqrt{\lvert H_\theta\rvert\cdot\lvert A\rvert\cdot\lvert K_\theta\rvert}}\sum_{\substack{ h\in H\colon\\ hy_0=y}}\;\sum_{k\in K}
\langle f(hk),u_{\theta,j}\rangle_{U_{\theta,j}}\sum_{a\in A}\;^{hk}\overset{\triangle}{\theta}(a)\overset{\triangle}{\overline{\chi}^\sharp}(a,h)\\
(\text{by }\eqref{hh1chitheta})\quad& = \frac{\sqrt{\dim\sigma_{\theta,j}\lvert C\rvert\cdot\lvert A\rvert}}{\sqrt{\lvert H_\theta\rvert\cdot\lvert K_\theta\rvert}}\sum_{\substack{ h\in H\colon\\ hy_0=y}}\sum_{\substack{k\in K\colon \\ \;^k\theta=\,^{h^{-1}}\chi \\ }}\langle f(hk),u_{\theta,j}\rangle_{U_{\theta,j}}\\
\end{split}
\]
and, as in the the proof of (2), this is zero unless $\chi\in\widehat{B_{y}}_{,\theta}$ (now, cf.\ Remark \ref{Remchitheta0}).
If this is the case, then all the $h$'s in the first sum are of the form $h=h(\chi)k_0$, with $k_0\in K$ and $h(\chi)$ as in Remark \ref{Remchitheta0}, while in the second sum $k=k_0^{-1}k'$, with $k'\in K_\theta$, so that
\[
\begin{split}
\left\langle \overset{\displaystyle\triangle}{S_{\theta,j}F_f},\overset{\triangle}{\chi^\sharp}\right\rangle_{L(H\ltimes A)}& = \frac{\sqrt{\dim\sigma_{\theta,j}\lvert C\rvert\cdot\lvert A\rvert}}{\sqrt{\lvert H_\theta\rvert\cdot\lvert K_\theta\rvert}}\sum_{k_0\in K}\sum_{k'\in K_\theta}\langle f(h(\chi)k'),u_{\theta,j}\rangle_{U_{\theta,j}}\\
& = \lvert K\rvert\frac{\sqrt{\dim\sigma_{\theta,j}\lvert C\rvert\cdot\lvert A\rvert\cdot\lvert K_\theta\rvert}}{\sqrt{\lvert H_\theta\rvert}}\langle f(h(\chi)),u_{\theta,j}\rangle_{U_{\theta,j}}\\
\end{split}
\]
where the last equality follows from \eqref{timessquarep2} and the $K_\theta$-invariance of $u_{\theta,j}$. 
Finally,
\[
\left\langle S_{\theta,j}F_f,\chi^\sharp\right\rangle_{L(X)}=\frac{\sqrt{\dim\sigma_{\theta,j}\lvert A\rvert\cdot\lvert K_\theta\rvert}}{\sqrt{\lvert H_\theta\rvert\cdot\lvert C\rvert}}\langle f(h(\chi)),u_{\theta,j}\rangle_{U_{\theta,j}}
\]
and therefore, recalling that $\|\chi^\sharp\|^2 = |B|$ (cf.\ \ref{e:norma-chi-sharp}), 
\[
\begin{split}
S_{\theta,j}F_f& = \sum_{y\in Y}\sum_{\chi\in\widehat{B_y}}\frac{1}{\lvert B\rvert}\left\langle S_{\theta,j}F_f,\chi^\sharp\right\rangle_{L(X)}\chi^\sharp\\
& = \frac{\sqrt{\dim\sigma_{\theta,j}\lvert K_\theta\rvert}}{\sqrt{\lvert H_\theta\rvert\cdot\lvert B\rvert}}\sum_{y\in Y}\sum_{\chi\in\widehat{B_y}_{,\theta}}\langle f(h(\chi)),u_{\theta,j}\rangle_{U_{\theta,j}}\chi^\sharp\\
(\text{by }\eqref{hchith1})\quad& = \frac{\sqrt{\dim\sigma_{\theta,j}\lvert K_\theta\rvert}}{\sqrt{\lvert H_\theta\rvert\cdot\lvert B\rvert}}\sum_{y\in Y}\sum_{\chi\in\widehat{B_y}_{,\theta}}
\langle f(t(\chi)s(\chi)k(\chi)),u_{\theta,j}\rangle_{U_{\theta,j}}\chi^\sharp\\
(\text{by }\eqref{timessquarep2})\quad& = \frac{\sqrt{\dim\sigma_{\theta,j}\lvert K_\theta\rvert}}{\sqrt{\lvert H_\theta\rvert\cdot\lvert B\rvert}}\sum_{y\in Y}\sum_{\chi\in\widehat{B_y}_{,\theta}}\langle f(t(\chi)),\sigma_{\theta,j}(s(\chi))u_{\theta,j}\rangle_{U_{\theta,j}}\chi^\sharp\\
(\text{by }\eqref{squarep14bis})\quad& = \frac{1}{\sqrt{\lvert B\rvert}}\sum_{y\in Y}\sum_{\chi\in\widehat{B_y}_{,\theta}}\left[T_{\theta,j}f(t(\chi))\right](s(\chi))\,\chi^\sharp.
\end{split}
\]
\end{proof}

We now evaluate a spherical harmonic in Theorem \ref{Teospherfunct}.\ref{Teospherfunct3} on an element of $X$. 
We use \eqref{chisharp} and the decomposition of $f$ in \cite[(11.8)]{book4}, namely, $f=\sum_{t\in\mathcal{T}_\theta}tf_{u_t}$, 
where $u_t\in U_{\theta,j}$ and $f_{u_t}(h)=\sigma_{\theta,j}(h^{-1})u_t$, for $h\in H_\theta$, and 
$f_{u_t}$ vanishes in $H \setminus H_\theta$. 
The value of \eqref{defStheta} on $(y,b)\in X$ is equal to
\begin{equation}\label{descsphharm}
\begin{split}
\frac{1}{\sqrt{\lvert B\rvert}}\sum_{\chi\in\widehat{B_y}_{,\theta}}\chi(b)\left[T_{\theta,j}f(t(\chi))\right](s(\chi)) & =
\frac{1}{\sqrt{\lvert B\rvert}}\sum_{t\in\mathcal{T}_\theta}\sum_{\chi\in\widehat{B_y}_{,\theta}}\chi(b)\left[T_{\theta,j}tf_{u_t}(t(\chi))\right](s(\chi))\\
& = \frac{1}{\sqrt{\lvert B\rvert}}\sum_{t\in\mathcal{T}_\theta}\sum_{\chi\in\widehat{B_y}_{,\theta}}\chi(b)\left[T_{\theta,j}f_{u_t}(t^{-1}t(\chi))\right](s(\chi))\\
& = \frac{1}{\sqrt{\lvert B\rvert}}\sum_{t\in\mathcal{T}_\theta}\sum_{\substack{\chi\in\widehat{B_y}_{,\theta}\colon\\ t(\chi)=t}}\chi(b)\left[T_{\theta,j}f_{u_t}(1_H)\right](s(\chi))\\
& = \frac{1}{\sqrt{\lvert B\rvert}}\sum_{t\in\mathcal{T}_\theta}\sum_{\substack{\chi\in\widehat{B_y}_{,\theta}\colon\\ t(\chi)=t}}\chi(b)\left[T_{\theta,j}u_t\right](s(\chi)).
\end{split}
\end{equation}

We are now in a position to compute the spherical functions on the representatives \eqref{defGamma0}. 

\begin{corollary}
Let $\theta \in \Theta$ and $j=0,1, \ldots, n_\theta$. For $(z,d)\in \Gamma_0$, we have
\[
\Phi_{\theta,j}(z,d)=0\quad\text{ if }\quad Kz\cap H_\theta y_0=\emptyset;
\]
otherwise, if $(z,\chi)$ is the unique element in $\Gamma_\theta$ with $z$ in the first position, then
\[
\Phi_{\theta,j}(z,d)=\frac{\lvert K_\theta\rvert}{\lvert K\rvert\cdot\lvert K_\chi\rvert}\varphi_{\theta,j}\left(h_0(\chi)\right)\sum_{k\in K_z}\chi(k^{-1}d),
\]
where $h_0(\chi)\in H_\theta$ (here $\Gamma_\theta$ and $h_0(\chi)$ are as in Remark \ref{defGammatheta}).
\end{corollary}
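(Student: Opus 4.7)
The plan is to reduce the computation to a direct substitution from Theorem \ref{Teospherfunct}(2) and formula \eqref{calcPsizchi}, using Lemma \ref{LemmaGammatheta} to control which index in $\Gamma_\theta$ contributes.

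First, I recall the expansion
\[
\Phi_{\theta,j}=\frac{\lvert K_\theta\rvert}{\lvert K\rvert}\sum_{(z',\chi')\in\Gamma_\theta}\frac{\lvert K_{z'}\rvert}{\lvert K_{\chi'}\rvert}\varphi_{\theta,j}\left(h_0(\chi')\right)\Lambda_{z',\chi'},
\]
and the fact from Theorem \ref{TheoremPsi} (or directly from \eqref{calcPsizchi}) that $\Lambda_{z',\chi'}(z,d)=0$ unless $z'=z$. Thus, evaluating at $(z,d)\in\Gamma_0$ kills every term whose first coordinate differs from $z$, and the sum reduces to those $(z,\chi')\in\Gamma_\theta$ having $z$ in the first position.

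Next, I invoke Lemma \ref{LemmaGammatheta}. If $Kz\cap H_\theta y_0=\emptyset$, then no element of $\Gamma_\theta$ has $z$ in the first coordinate, hence the surviving sum is empty and $\Phi_{\theta,j}(z,d)=0$. Otherwise, the same lemma guarantees the existence of a \emph{unique} $\chi\in\widehat{B_z}^0$ such that $(z,\chi)\in\Gamma_\theta$, and the sum collapses to a single term:
\[
\Phi_{\theta,j}(z,d)=\frac{\lvert K_\theta\rvert}{\lvert K\rvert}\cdot\frac{\lvert K_{z}\rvert}{\lvert K_{\chi}\rvert}\varphi_{\theta,j}\left(h_0(\chi)\right)\Lambda_{z,\chi}(z,d).
\]
Finally, substituting \eqref{calcPsizchi}, namely
\[
\Lambda_{z,\chi}(z,d)=\frac{1}{\lvert K_z\rvert}\sum_{k\in K_z}\chi(k^{-1}d),
\]
the factor $\lvert K_z\rvert$ cancels and yields exactly the stated expression.

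There is no substantive obstacle: all the heavy lifting (identification of the orbit representatives in $\Gamma_\theta$, existence and uniqueness of $h_0(\chi)\in H_\theta$ up to $K_\theta$-multiplication, and the explicit averaging formula for $\Lambda_{z,\chi}$) has already been carried out in Section \ref{SecIntBasis} and in Theorem \ref{Teospherfunct}(2). The corollary is essentially a bookkeeping consequence, with the only mildly delicate point being to quote Lemma \ref{LemmaGammatheta} in both directions: once to observe vanishing when $Kz\cap H_\theta y_0=\emptyset$, and once to extract the unique contributing $\chi$ in the opposite case.
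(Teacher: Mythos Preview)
Your proof is correct and follows essentially the same approach as the paper: both evaluate \eqref{sphfunct} at $(z,d)$, use \eqref{calcPsizchi} to kill terms with $z'\neq z$, and invoke Lemma~\ref{LemmaGammatheta} (equivalently Remark~\ref{defGammatheta}) to handle the two cases and identify the unique surviving $\chi$.
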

\begin{proof}
This follows immediately from Remark \ref{defGammatheta}, and formul\ae \ \eqref{calcPsizchi}, and \eqref{sphfunct}.
\end{proof}

The orthogonality relations for the spherical functions may be easily deduced from \cite[Corollary 4.6.4]{book} and \eqref{dimind}: we have
\begin{equation}\label{normPhi}
\left\lVert\Phi_{\theta,j}\right\rVert^2_{L(X)}=\frac{\lvert X\rvert\cdot \lvert H_\theta\rvert}{\lvert H\rvert\dim\sigma_{\theta,j}}=\frac{\lvert A\rvert\cdot \lvert H_\theta\rvert}{\lvert K\rvert\cdot\lvert C\rvert\dim\sigma_{\theta,j}}
\end{equation}
so that, taking into account that the spherical functions are constant on the orbits of $(K\ltimes C)$ on $X$ (whose cardinality are
expressed in \eqref{defgammazd}), we get 
\begin{equation}\label{orthrel}
\sum_{(z,d)\in\Gamma_0}\Phi_{\theta,j}(z,d)\overline{\Phi_{\theta',j'}(z,d)}\frac{\lvert K\rvert\cdot\lvert C\rvert}{\left\lvert K_{z,d}\right\rvert\cdot \lvert hC\cap C\rvert }=\delta_{(\theta,j),(\theta',j')}\frac{\lvert A\rvert\cdot \lvert H_\theta\rvert}{\lvert K\rvert\cdot\lvert C\rvert\dim\sigma_{\theta,j}}
\end{equation}
for all $\theta, \theta'\in\Theta, 0\leq j\leq n_\theta$, and $0\leq j'\leq n_{\theta'}$.

\section{The group theoretical structure of the $q$-analog of the nonbinary Johnson scheme}
\label{qJohstructure}

In this section, we treat the linear algebraic aspects of the $q$-analog of the nonbinary Johnson scheme in \cite[pp.\ 17-32]{DUNKL}. We closely follow Dunkl's paper and extensively use his notation, but we emphasize the characteristic-free features of the approach: we work on finite dimensional vector spaces over an arbitrary field $\mathbb{F}$ (finite fields appear only in Corollary \ref{lastCor}). Moreover, we use a coordinate-free approach (which was extensively used by Dunkl): in place of matrices, we work with linear operators in order to reveal the underlying group theoretic aspects (semidirect products, groups actions, etc.); in place of bases, we use suitable direct sum decompositions. We also point out when a construction (a map, etc.) is canonical, that is, it does not depend on the choice of a basis. Finally, most of the results are described by using the group theoretical setting that we developed in Section \ref{Secgenconstr}.

Let $V$ be a finite dimensional vector space over a field $\mathbb{F}$.
We fix, once and for all, a decomposition $V=V_0\oplus V_1$ of $V$ as a direct sum of two nontrivial subspaces. 
Let  then $P\colon V\rightarrow V_1$ denote the projection onto $V_1$ along $V_0$, and set $n\coloneqq\dim V$ and $m\coloneqq \dim V_0$. 
Finally, let $\mathcal{G}_n$ denote the lattice of all subspaces of $V$ and set 
\begin{equation}
\label{e:H-n}
\mathcal{H}_n\coloneqq\{(U_0,U_1,\Psi)\colon U_0\leq V_0, U_1\leq V_1,\Psi\in\Hom(U_1,V_0/U_0)\}.
\end{equation}
With each $U\in\mathcal{G}_n$ we associate the triple $(U_0,U_1,\Psi)\in\mathcal{H}_n$ defined as follows:
\begin{itemize} 
\item $U_0\coloneqq U\cap V_0$ and $U_1\coloneqq P(U)$; 
\item for $u_1\in U_1$ let $v_0 \in V_0$ such that $v_0+u_1\in U$ and set $\Psi(u_1)\coloneqq v_0+U_0\in V_0/U_0$.
\end{itemize}

\begin{proposition}
\label{PropGH}
$\Psi$ is a well defined linear map in $\Hom(U_1,V_0/U_0)$.
Moreover, the map 
\begin{equation}\label{UU0U1}
U\longmapsto (U_0,U_1,\Psi)
\end{equation} 
yields a bijection between $\mathcal{G}_n$ and $\mathcal{H}_n$. 
Finally, $U/U_0\simeq U_1$, so that $\dim U=\dim U_0+\dim U_1$.
\end{proposition}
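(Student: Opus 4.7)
The plan is to verify well-definedness and linearity of $\Psi$ first, then to establish the dimension formula via the first isomorphism theorem, and finally to construct an explicit inverse to the map $U \mapsto (U_0, U_1, \Psi)$ in order to obtain the bijection.

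For well-definedness, I would observe that if $v_0, v_0' \in V_0$ both satisfy $v_0 + u_1, v_0' + u_1 \in U$, then $v_0 - v_0' \in U \cap V_0 = U_0$, so $v_0 + U_0 = v_0' + U_0$. Linearity of $\Psi$ is immediate: if $\Psi(u_1) = v_0 + U_0$ and $\Psi(u_1') = v_0' + U_0$ (corresponding to $v_0 + u_1, v_0' + u_1' \in U$), then $(v_0 + v_0') + (u_1 + u_1') \in U$, so $\Psi(u_1 + u_1') = (v_0 + v_0') + U_0$, and similarly for scalar multiplication. For the dimension formula, I would note that $P|_U \colon U \to V_1$ has kernel $U \cap V_0 = U_0$ and image $P(U) = U_1$ by definition; the first isomorphism theorem then gives $U/U_0 \simeq U_1$ and hence $\dim U = \dim U_0 + \dim U_1$.

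For the bijection, the key step is to construct the inverse. Given $(U_0, U_1, \Psi) \in \mathcal{H}_n$, I would pick any linear lift $\widetilde{\Psi} \colon U_1 \to V_0$ of $\Psi$ along the quotient map $\pi \colon V_0 \to V_0/U_0$ (this exists because $\pi$ is surjective and we work over a field), and define
\[
U \coloneqq U_0 \oplus \{\widetilde{\Psi}(u_1) + u_1 \colon u_1 \in U_1\}.
\]
The second summand is a subspace of $V$ of dimension $\dim U_1$ since $V_0 \cap V_1 = 0$ forces the map $u_1 \mapsto \widetilde{\Psi}(u_1) + u_1$ to be injective, and it meets $U_0 \subseteq V_0$ trivially by the same reason. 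I would then check that $U \cap V_0 = U_0$, $P(U) = U_1$, and that the linear map attached to $U$ via \eqref{UU0U1} is exactly the given $\Psi$. A minor point, which I would dispatch immediately, is to verify that $U$ does not depend on the choice of lift $\widetilde{\Psi}$: two lifts differ by a linear map $U_1 \to U_0$, and replacing $\widetilde{\Psi}$ by $\widetilde{\Psi} + \varepsilon$ with $\varepsilon(U_1) \subseteq U_0$ shifts each element $\widetilde{\Psi}(u_1) + u_1$ by $\varepsilon(u_1) \in U_0$, which is absorbed by the $U_0$ summand.

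Finally, to conclude that \eqref{UU0U1} is bijective, I would verify separately that the two constructions are mutually inverse: starting from $U$, forming $(U_0, U_1, \Psi)$ and then rebuilding a subspace via the formula above recovers $U$ (any $u \in U$ can be written $u = v_0 + u_1$ with $u_1 = P(u) \in U_1$ and $v_0 \in V_0$ satisfying $v_0 + U_0 = \Psi(u_1)$, hence $u$ lies in the rebuilt subspace, and the reverse containment is immediate); and starting from a triple $(U_0, U_1, \Psi)$, the triple attached to the subspace built above returns to $(U_0, U_1, \Psi)$. The only step that demands attention is the well-definedness of the inverse construction with respect to the choice of lift, as noted; once this is in place, the bijection follows routinely.
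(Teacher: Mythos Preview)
Your proof is correct and follows essentially the same approach as the paper. The only presentational difference is that the paper describes the inverse directly via the coset formula $U=\{u_0+u_1\colon u_1\in U_1,\ u_0\in\Psi(u_1)\}$ rather than choosing a linear lift $\widetilde{\Psi}$; your version with the lift and the check of lift-independence is just a more explicit rendering of the same construction.
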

\begin{proof}
Let $u_1\in U_1$. Then, there exists $u \in U$ such that $P(u) = u_1$. On the other hand, there exist unique $v_0 \in V_0$ and
$v_1 \in V_1$ such that $u = v_0 + v_1$. By applying $P$, we have $u_1 = P(u) = P(v_0) + P(v_1) = P(v_1)$ so that $u_1 = v_1$
and therefore $v_0+u_1 = u \in U$.
Moreover, $u_0'\in V_0$ satisfies $u_0'+u_1 \in U$ if and only if $u_0'-u_0\in U \cap V_0 = U_0$. 
As a consequence, $\{u_0\in V_0\colon u_0+u_1\in U\}= u_0+U_0$ is an element of $V_0/U_0$, and $\Psi$ is well defined.
It follows that 
\begin{equation}\label{Uu0u1}
U=\{u_0+u_1\colon u_1\in U_1\text{ and }u_0\in\Psi(u_1)\}.
\end{equation}
Conversely, any triple $(U_0,U_1,\Psi)\in\mathcal{H}_n$ is the image of the unique $U\in\mathcal{G}_n$ defined by means of \eqref{Uu0u1}.
We end the proof by noting that the kernel of the projection $P\vert_U\colon U\rightarrow U_1$ is just $U_0$ so that
$U/\Ker P\vert_U = U/U_0 \cong {\rm Im}(P\vert_U) = P(U) = U_1$.
\end{proof}

Set
\begin{equation}
\label{H et A}
H\equiv H_{m,n-m}\coloneqq \GL(V_0)\times \GL(V_1) \mbox{ \ and \ } A\equiv A_{n-m,m}\coloneqq \Hom(V_1,V_0)
\end{equation}
and define an action of $H$ on $A$ by setting
\begin{equation}
\label{(star)p28}
(g_0,g_1)T\coloneqq g_0Tg_1^{-1},\quad \text{ for all }g_0\in \GL(V_0),\, g_1\in \GL(V_1),\text{ and } T\in \Hom(V_1,V_0).
\end{equation}

Consider the semidirect product $H\ltimes A$ as in Section \ref{Secgenconstr}. Fix $0\leq r\leq m$, $0\leq s\leq n-m$, and set
\begin{equation}
\label{e:Y-Yrs}
Y\equiv Y_{r,s}\coloneqq \{(U_0,U_1)\colon U_0\leq V_0, U_1\leq V_1, \dim U_0=r,\dim U_1=s\}
\end{equation}
which is naturally an $H$-homogeneous space: $(g_0,g_1)(U_0,U_1)\coloneqq (g_0U_0,g_1U_1)$. 
For each $y\equiv (U_0,U_1)\in Y$ define the vector space 
\begin{equation}
\label{e:B-y}
B_y\coloneqq \Hom(U_1,V_0/U_0)
\end{equation}
 and a family of isomorphisms as in \eqref{ByBhy} by setting
\begin{equation}\label{2starp28}
\begin{array}{ccc}
(g_0,g_1)\colon \Hom(U_1,V_0/U_0)&\longrightarrow&\Hom(g_1U_1,V_0/g_0U_0)\\
\Psi&\longmapsto&\left(g_0|_{V_0/U_0}\right)\Psi \left(g_1^{-1}|_{g_1U_1}\right)
\end{array}
\end{equation}
where $g_1^{-1}|_{g_1U_1}$ is the restriction of $g_1^{-1}$ to $g_1U_1$ while $g_0|_{V_0/U_0}$ is the linear map $V_0/U_0\rightarrow V_0/g_0U_0$ defined by setting $g_0|_{V_0/U_0}(v_0+U_0)\coloneqq g_0v_0+g_0U_0$. For the sake of simplicity, we will often write $g_0\Psi g_1^{-1}$ to denote $\left(g_0|_{V_0/U_0}\right)\Psi \left(g_1^{-1}|_{g_1U_1}\right)$.
For $y\equiv (U_0,U_1)\in Y$ the projection $\pi_y\colon A\rightarrow B_y$ is given by
\begin{equation}\label{defpiynbqJ}
\begin{array}{ccc}
\pi_y\colon\Hom(V_1,V_0)&\longrightarrow&\Hom(U_1,V_0/U_0)\\
T&\longmapsto&\pi_{U_0}T\iota_{U_1}
\end{array}
\end{equation}
where $\pi_{U_0}\colon V_0\rightarrow V_0/U_0$ is the canonical projection and $\iota_{U_1}\colon U_1\rightarrow V_1$ is the inclusion map.
We now check the fundamental relation \eqref{basequat} in the present setting:
\[
h\pi_y T=(g_0,g_1)\bigl(\pi_{U_0}T\iota_{U_1}\bigr)=g_0|_{V_0/U_0}\bigl(\pi_{U_0}T\iota_{U_1}\bigr)g_1^{-1}|_{g_1U_1}
=\pi_{g_0U_0}\bigl(g_0Tg_1^{-1}\bigr)\iota_{g_1U_1}
=\pi_{hy} hT,
\]
for all $h=(g_0,g_1)\in H$, $y=(U_0,U_1)\in Y$, and $T\in A$. Then the homogeneous space $X$ as in Proposition \ref{PropX} is made up of all triples $(U_0,U_1,\Psi)$ with $y=(U_0,U_1)\in Y$ and $\Psi\in B_y$. The action in \eqref{squarp3} becomes, for all
$(g_0,g_1,T) \in H \ltimes A$ and $(U_0,U_1,\Psi) \in X$,
\begin{equation}\label{squarp3bis}
(g_0,g_1,T)(U_0,U_1,\Psi)=\left(g_0U_0,g_1U_1,\pi_{g_0U_0}T\iota_{g_1U_1}+g_0\Psi g_1^{-1}
\right).
\end{equation}

The group $H\ltimes A$ also acts on the vector space $V=V_0\oplus V_1$ in a natural way: for $u=u_0+u_1\in V$,  $u_0\in V_0, u_1\in V_1$, and $(g_0,g_1,T)\in H\ltimes A$, then 
\begin{equation}\label{squarp3tris}
(g_0,g_1,T)(u_0+u_1)=\overbrace{(g_0u_0+Tg_1u_1)}^{\in V_0}\;+\;\overbrace{g_1u_1}^{\in V_1}.
\end{equation}
This way, $H\ltimes A$ corresponds to a subgroup of $\GL(V)$ and it is precisely the stabilizer of the subspace $V_0$ (see \cite[Sections 8.6 and 8.7]{book} and \cite[p.15]{DUNKL}). The homogeneous space $\GL(V)/(H\ltimes A)$ is then $\mathcal{G}_{n,m}$, the Grassmannian of all 
$m$-dimensional vector subspaces of $V$. The orbits of $H\ltimes A$ on $\mathcal{G}_{n,m}$ are 
\begin{equation}\label{defGmk}
\mathcal{G}_{n,m,k}\coloneqq\{U\in\mathcal{G}_{n,m}\colon \dim(U\cap V_0)=k\},\qquad \max\{0,2m-n\}\leq k\leq m.
\end{equation}

\begin{proposition}\label{PropHn}
The correspondence \eqref{UU0U1} yields a conjugacy between the actions of 
$H\ltimes A$ on $\mathcal{G}_n$ by means of \eqref{squarp3tris} and on $\mathcal{H}_n$ by means of \eqref{squarp3bis}.
\end{proposition}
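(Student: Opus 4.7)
The plan is to verify, by unwinding both actions on an arbitrary element, that the bijection \eqref{UU0U1} intertwines the $H\ltimes A$-action \eqref{squarp3tris} on $\mathcal{G}_n$ with the action \eqref{squarp3bis} on $\mathcal{H}_n$. More precisely, starting with $U\in\mathcal{G}_n$ whose associated triple is $(U_0,U_1,\Psi)$, I would compute the triple associated with $U'\coloneqq (g_0,g_1,T)U$ and check that it coincides with the right-hand side of \eqref{squarp3bis}, namely $(g_0U_0,\,g_1U_1,\,\pi_{g_0U_0}T\iota_{g_1U_1}+g_0\Psi g_1^{-1})$.

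The main steps are the following. Using the explicit description of $U$ in \eqref{Uu0u1}, a typical element of $U'$ has the form $(g_0u_0+Tg_1u_1)+g_1u_1$ with $u_1\in U_1$ and $u_0\in \Psi(u_1)$. From this, I would read off the first two entries of the triple associated with $U'$: intersecting with $V_0$ forces $g_1u_1=0$, hence $u_1=0$ and $u_0\in\Psi(0)=U_0$, giving $U'\cap V_0=g_0U_0$; while projecting along $V_0$ via $P$ yields $P(U')=g_1U_1$. This matches the first two components of the expected triple.

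To identify the third component $\Psi'$, I would pick $u'_1=g_1u_1\in g_1U_1$, choose any representative $u_0\in\Psi(u_1)\subseteq V_0$, and conclude that $\Psi'(u'_1)$ is the coset of $g_0u_0+Tg_1u_1$ modulo $g_0U_0$. Splitting this coset into a ``$u_0$-part'' and a ``$T$-part'', the former gives $g_0(u_0+U_0)=g_0\Psi(u_1)$, which by \eqref{2starp28} equals $(g_0\Psi g_1^{-1})(u'_1)$ (since $u_1=g_1^{-1}u'_1$); the latter gives $\pi_{g_0U_0}(Tu'_1)=(\pi_{g_0U_0}T\iota_{g_1U_1})(u'_1)$ by \eqref{defpiynbqJ}. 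Summing, $\Psi'=\pi_{g_0U_0}T\iota_{g_1U_1}+g_0\Psi g_1^{-1}$, as required.

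I do not expect any serious obstacle: this is a direct and essentially self-checking computation, and compatibility with composition in $H\ltimes A$ follows automatically once both actions are known to agree on one representative, or can be checked in parallel by the same method. The only point needing attention is the bookkeeping between the two quotient spaces $V_0/U_0$ and $V_0/g_0U_0$, which is precisely what the definition of the $H$-equivariant isomorphisms $B_y\to B_{hy}$ in \eqref{2starp28} is designed to handle.
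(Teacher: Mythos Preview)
Your proposal is correct and follows essentially the same approach as the paper's own proof: both start from the description \eqref{Uu0u1} of $U$, write a generic element of $(g_0,g_1,T)U$ via \eqref{squarp3tris}, and then identify the resulting triple with the right-hand side of \eqref{squarp3bis} by the same coset computation. The only difference is cosmetic: you separately verify the three components $U'\cap V_0$, $P(U')$, and $\Psi'$, whereas the paper packages the last step as a single chain of equivalences.
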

\begin{proof}
Let $U\leq V$. From \eqref{Uu0u1} and \eqref{squarp3tris} it follows that a vector $v=v_0+v_1\in V$, with $v_0\in V_0$ and $v_1\in V_1$, belongs to $(g_0,g_1,T)U$ if and only if there exist $u_1\in U_1$ and $u_0\in\Psi(u_1)$ such that $v_0=g_0u_0+Tg_1u_1$ and $v_1=g_1u_1$. But
\[
\begin{split}
u_0\in\Psi(u_1) & \iff g_0u_0\in g_0\Psi(u_1)=\left(g_0\Psi g_1^{-1}\right)(g_1u_1)\\
& \iff g_0u_0+ Tg_1u_1\in \left\{\left(g_0\Psi g_1^{-1}\right)(g_1u_1)+\left[\pi_{g_0U_0}T\iota_{g_1U_1}\right](g_1u_1)\right\} \\
& \iff g_0u_0+ Tg_1u_1\in\left(\pi_{g_0U_0}T\iota_{g_1U_1}+g_0\Psi g_1^{-1}\right)(g_1u_1).
\end{split}
\]
In other words, with respect to \eqref{UU0U1}, $U$ corresponds to $(U_0,U_1,\Psi)$ if and only if $(g_0,g_1,T)U$ corresponds to 
$(g_0,g_1,T)(U_0,U_1,\Psi)$.
\end{proof}

If we fix $y_0=(W_0,W_1)\in Y_{r,s}$, then its stabilizer $K_{(W_0,W_1)}$ in $H$ is isomorphic to the direct product of two groups of the same structure of $H\ltimes A$. Namely, if $V_0=W_0\oplus Z_0$ and $V_1=W_1\oplus Z_1$, then
\begin{equation}\label{defW1W0}
\begin{split}
&K_{(W_0,W_1)}\cong \left(H_{r,m-r}\ltimes A_{m-r,r}\right)\times\left( H_{s,n-m-s}\ltimes A_{n-m-s,s}\right)\\
&\quad=\bigl\{\bigl((g_{00},g_{01},T_0),(g_{10},g_{11},T_1)\bigr)\colon g_{00}\in \GL(W_0),g_{01}\in \GL(Z_0), \\
&\qquad\quad T_0\in \Hom(Z_0,W_0), g_{10}\in \GL(W_1), g_{11}\in \GL(Z_1), T_1\in \Hom(Z_1,W_1)\bigr\},
\end{split}
\end{equation}
where the action of $k_0\coloneqq(g_{00},g_{01},T_0)$ (resp. $k_1\coloneqq(g_{10},g_{11},T_1)$) on $V_0=W_0\oplus Z_0$ (resp. $V_1=W_1\oplus Z_1$) is defined as in \eqref{squarp3tris}.
Moreover,
\begin{equation}\label{Cinthiscase}
C=\Ker \pi_{y_0}=\{T\in\Hom(V_1,V_0)\colon T(W_1)\leq W_0\}\cong \Hom(W_1,W_0)\oplus \Hom(Z_1,V_0).
\end{equation}

Finally, for $\max\{2r-m,0\}\leq h\leq r$ and $\max\{2s-n+m,0\}\leq i\leq s$, select
\begin{equation}\label{hilvar2}
z_{h,i}\coloneqq (U_{0,h},U_{1,i})\in Y_{r,s}\text{ such that } \dim\left(W_0\cap U_{0,h}\right)=h\;\text{ and }\; \dim\left( W_1\cap U_{1,i}\right)=i.
\end{equation}  

Then the family of all pairs $(U_{0h},U_{1i})$ constitutes a complete set of representatives for the orbits of 
$K_{(W_0,W_1)}$ on $Y_{r,s}$. Indeed, $Y_{r,s}$ clearly coincides with $\mathcal{G}_{m,r}\times\mathcal{G}_{n-m,s}$, 
and, taking \eqref{defW1W0} into account, we apply \eqref{defGmk} to $K_{(W_0,W_1)}$.

Denote by $V'$ the dual vector space of $V$ and, for $U\leq V$, denote by $U^\circ$ the annihilator of $U$, that is, the vector subspace consisting of all $\phi\in V'$ such that $\phi(u)=0$ for all $u\in U$. For $g \in \GL(V)$ define $g^t\in\GL(V')$ by setting
\begin{equation}
\label{e:centralo}
[g^t\phi](v)\coloneqq \phi(g^{-1}v)
\end{equation}
for all $\phi\in V'$ and $v\in V$. 
Then, the map $\GL(V)\ni g\mapsto g^t\in\GL(V')$ is an isomorphism. Moreover, if denoting by $\mathcal{G}'_n$ the set of all vector subspaces of $V'$, then the map $\mathcal{G}_n\ni U\mapsto U^\circ\in\mathcal{G}_n'$ is a bijection and we have
\begin{equation}\label{ggtVprime}
\left(g U\right)^\circ=g^t \left(U^\circ\right)
\end{equation}
for all $g \in \GL(V)$ and $U \in \mathcal{G}_n$.
Indeed, for $\phi\in V'$, $U \in \mathcal{G}_n$, and $g \in \GL(V)$, one has $\phi\in\left(g U\right)^\circ$ if and only if $0=\phi(gu)=[(g^{-1})^t\phi](u)$ for all $u\in U$, that is, if and only if $\phi\in g^t \left(U^\circ\right)$. In the following, for the sake of simplicity,
we identify $\GL(V)$ and $\GL(V')$ via \eqref{e:centralo}.  It follows that (in $\GL(V)$) the stabilizer of $U^\circ$ equals the stabilizer of $U$. Since $(U\cap V_0)^\circ=U^\circ+V_0^\circ$ (cf.\ \cite[Theorem 3.15]{ROMAN}), in our setting, the orbits of $H\ltimes A$ (the stabilizer of $V_0$) in $\mathcal{G}_{n,n-m}'$ (the Grassmannian formed by all $(n-m)$-dimensional vector subspaces of $V'$) are 
exactly the sets $\mathcal{G}_{n,n-m,k}'\coloneqq\{U'\in\mathcal{G}_{n,n-m}'\colon \dim(U'+ V_0^\circ)=n-k\}$, for $\max\{0,2m-n\}\leq k\leq m$. Note that $\mathcal{G}_{n,n-m,k}'$ is the image of $\mathcal{G}_{n,m,k}$ in \eqref{defGmk} under the correspondence $U\mapsto U^\circ$. Finally, recall that $U\leq W$ in $V$ implies that $W^\circ\leq U^\circ$ in $V'$. 

Consider now the natural action of $H$ on $\Hom(V_0,V_1)$ given by setting $(g_0,g_1)S\coloneqq g_1Sg_0^{-1}$, for all $(g_0,g_1)\in H$ and
$S\in \Hom(V_0,V_1)$.

Let $S\in \Hom(V_0,V_1)$ and fix decompositions $V_0=\Ker S\oplus Z_0$ and $V_1=\Im S\oplus Z_1$. 

\begin{lemma}\label{LemmaHS}
The stabilizer $H_S$ of $S$ in $H$ is made up of all elements in $K_{(\Ker S,\Im S)}$ as in \eqref{defW1W0} such that 
$S|_{Z_0}=g_{10}Sg_{01}^{-1}$.
\end{lemma}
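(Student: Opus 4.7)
The plan is to unpack the defining condition $(g_0,g_1)\in H_S$, namely $g_1Sg_0^{-1}=S$, equivalently $g_1S=Sg_0$ as an identity in $\Hom(V_0,V_1)$, and read off from it both the membership in $K_{(\Ker S,\Im S)}$ and the explicit relation $S|_{Z_0}=g_{10}Sg_{01}^{-1}$. The argument is then reversible, yielding an ``if and only if''.

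First I would establish that the decomposition is preserved. Applying $g_1S=Sg_0$ to $v_0\in\Ker S$ gives $0=g_1Sv_0=Sg_0v_0$, so $g_0(\Ker S)\subseteq\Ker S$, with equality by invertibility. Dually, $\Im(Sg_0)=\Im S$ and $\Im(g_1S)=g_1(\Im S)$ force $g_1(\Im S)=\Im S$. Hence $(g_0,g_1)\in K_{(\Ker S,\Im S)}$, and by \eqref{defW1W0} (with $W_0\coloneqq\Ker S$, $W_1\coloneqq\Im S$) we may write $g_0=(g_{00},g_{01},T_0)$ and $g_1=(g_{10},g_{11},T_1)$, where $g_{00}\in\GL(W_0)$, $g_{01}\in\GL(Z_0)$, $T_0\in\Hom(Z_0,W_0)$, and similarly for the $V_1$-side.

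Next I would evaluate the identity $g_1S=Sg_0$ separately on each summand of $V_0=W_0\oplus Z_0$. On $w_0\in W_0=\Ker S$, both sides vanish trivially, because $g_0w_0=g_{00}w_0\in W_0=\Ker S$. On $z_0\in Z_0$, formula \eqref{squarp3tris} (applied to $V_0$) gives $g_0z_0=T_0g_{01}z_0+g_{01}z_0$; since $T_0g_{01}z_0\in W_0=\Ker S$, the term $ST_0g_{01}z_0$ drops out and we get $Sg_0z_0=Sg_{01}z_0$. On the other side, $Sz_0\in W_1=\Im S$ has zero $Z_1$-component, so applying \eqref{squarp3tris} to $V_1$ gives $g_1Sz_0=g_{10}Sz_0$ (the factor $T_1$ contributes nothing for exactly this reason). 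The equation $g_1S=Sg_0$ therefore collapses to $g_{10}Sz_0=Sg_{01}z_0$ for every $z_0\in Z_0$, and after the substitution $z_0\mapsto g_{01}^{-1}z_0$ this becomes exactly $S|_{Z_0}=g_{10}Sg_{01}^{-1}$.

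For the converse direction I would simply read the preceding calculation backwards: given $(g_0,g_1)\in K_{(\Ker S,\Im S)}$ with decompositions as above and satisfying $S|_{Z_0}=g_{10}Sg_{01}^{-1}$, the same case analysis on $V_0=W_0\oplus Z_0$ shows $g_1Sv_0=Sg_0v_0$ for every $v_0\in V_0$, so $(g_0,g_1)\in H_S$. I do not anticipate any serious obstacle here; the only point requiring care is the bookkeeping with the semidirect product formula \eqref{squarp3tris}, and in particular noting that the ``off-diagonal'' components $T_0$ and $T_1$ impose no constraint because $T_0$ lands in $\Ker S$ and $Sz_0$ has no $Z_1$-component, so these pieces are invisible to the equation $g_1S=Sg_0$.
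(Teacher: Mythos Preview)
Your proof is correct and follows essentially the same approach as the paper's: both first establish membership in $K_{(\Ker S,\Im S)}$ by reading off $g_0(\Ker S)=\Ker S$ and $g_1(\Im S)=\Im S$ from the stabilizer condition, then unpack the semidirect-product action on each summand to isolate the relation $S|_{Z_0}=g_{10}Sg_{01}^{-1}$. The only cosmetic difference is that you work with the equivalent form $g_1S=Sg_0$ (avoiding the explicit inverse), whereas the paper computes $(g_{10},g_{11},T_1)S(g_{00},g_{01},T_0)^{-1}$ directly; the resulting computations are identical in substance.
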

\begin{proof}
Let $(g_0,g_1)\in H$ and suppose that it stabilizes $S$. Then $g_0$ stabilizes $\Ker S$ and $g_1$ stabilizes $\Im S$, so that $(g_0,g_1)\in K_{(\Ker S,\Im S)}$. Moreover, from \eqref{2tensp2}, \eqref{(star)p28}, \eqref{squarp3tris}, and \eqref{defW1W0} (with $(W_0,W_1)$ replaced by $(\Ker S,\Im S)$). it follows that, for all $u_0\in \Ker S$ and $z_0\in Z_0$,
\begin{equation}\label{starp32}
\begin{split}
(g_{10},g_{11},T_1)S(g_{00},g_{01},T_0)^{-1}(u_0+z_0)& = (g_{10},g_{11},T_1)S(g_{00}^{-1},g_{01}^{-1},-g_{00}^{-1}T_0g_{01})(u_0+z_0)\\
& = (g_{10},g_{11},T_1)S\left[\left(g_{00}^{-1}u_0-g_{00}^{-1}T_0z_0\right)+g_{01}^{-1}z_0\right]\\
\left(g_{00}^{-1}u_0-g_{00}^{-1}T_0z_0\in\Ker S\right)\quad& = (g_{10},g_{11},T_1)Sg_{01}^{-1}z_0\\
\left(Sg_{01}^{-1}z_0\in\Im S\right)\quad& = g_{10}Sg_{01}^{-1}z_0.
\end{split}
\end{equation}
Since $S(u_0+z_0)=Sz_0$, we deduce that $(g_{10},g_{11},T_1)S(g_{00},g_{01},T_0)^{-1}=S$ if and only if $S|_{Z_0}=g_{10}Sg_{01}^{-1}$.
\end{proof}

The orbits of $H$ on $\Hom(V_0,V_1)$ are parameterized by the rank, so that we define $\Hom_\ell(V_0,V_1)$ as the set of all linear maps from $V_0$ to $V_1$ of rank $\ell = 0,1, \ldots \min\{n-m,m\}$, so that
\[
\Hom(V_0,V_1)=\bigsqcup_{\ell=0}^{\min\{n-m,m\}}\Hom_\ell(V_0,V_1)
\] 
is the decomposition into $H$-orbits. For $\ell = 0,1, \ldots \min\{n-m,m\}$, pick $S_\ell\in \Hom_\ell(V_0,V_1)$. Then 
\begin{equation}\label{XiHom}
\{S_\ell\colon \ell=0,1,\dotsc,\min\{m,n-m\}\}
\end{equation}
is a set of representatives for these orbits.

If $u_0\in \Ker S$, $z_0\in Z_0$, $u_1\in \Im S$, $z_1\in Z_1$, and $\bigl((g_{00},g_{01},T_0),(g_{10},g_{11},T_1)\bigr)\in K_{(\Ker S,\Im S)}$, then, by \eqref{squarp3tris},
\[
\bigl((g_{00},g_{01},T_0),(g_{10},g_{11},T_1)\bigr)(u_0+z_0+u_1+z_1)= (g_{00} u_0+T_0g_{01} z_0)+g_{01}z_0+(g_{10}u_1+T_1g_{11}z_1)+g_{11}z_1
\]
so that, for $z_0=0$,
\begin{equation}\label{g11g10}
\bigl((g_{00},g_{01},T_0),(g_{10},g_{11},T_1)\bigr)\left[u_0+\left(z_1+\Im S\right)\right]
=g_{00} u_0+\left(g_{11}z_1+\Im S\right).
\end{equation}

Then with each $h=\bigl((g_{00},g_{01},T_0),(g_{10},g_{11},T_1)\bigr)\in H_S\leq K_{(\Ker S,\Im S)}$ 
(cf.\ Lemma \eqref{LemmaHS}) we associate the linear map $\mathcal{L}_h\in\GL(\Ker S)\times \GL\left(V_1/\Im S\right)$ defined by means of \eqref{g11g10}: 
\begin{equation}
\label{e:espressione-l-h}
\mathcal{L}_h\left[u_0+(z_1+\Im S)\right]\coloneqq g_{00} u_0+\left(g_{11}z_1+\Im S\right).
\end{equation}

\begin{proposition}\label{Prop2qJohn}
Let $\mathcal{L} \colon H_S \to \GL(\Ker S)\times \GL\left(V_1/\Im S\right)$ denote the map defined by \eqref{e:espressione-l-h}.
Then the following holds.
\begin{enumerate}[{\rm (1)}]
\item  $\mathcal{L}$ is a surjective group homomorphism;
\item $\Ker(\mathcal{L})$ consists of all elements $h=\bigl((g_{00},g_{01},T_0),(g_{10},g_{11},T_1)\bigr)\in H_S$ such that
$g_{00} = \Id_{\Ker S}$ and $g_{11} = \Id_{Z_1}$;
\item Suppose that $W_0\leq\Ker S$ and $\Im S\leq W_1$. Then the $\mathcal{L}$-image of 
$K_{(W_0,W_1)}\cap H_S$ is the stabilizer of $(W_0,W_1/\Im S)$ in $\GL(\Ker S)\times \GL\left(V_1/\Im S\right)$.
\end{enumerate}
\end{proposition}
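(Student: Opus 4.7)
The plan is to recast the map $\mathcal{L}_h$ in a coordinate-free way: for $h=(g_0,g_1)\in H_S$, the relation $g_1 S=Sg_0$ forces $g_0(\Ker S)=\Ker S$ and $g_1(\Im S)=\Im S$, so $g_0$ restricts to an element $g_0|_{\Ker S}\in\GL(\Ker S)$ and $g_1$ descends to an element $\bar{g}_1\in\GL(V_1/\Im S)$. A direct inspection of \eqref{g11g10} shows that the pair $(g_0|_{\Ker S},\bar{g}_1)$ coincides with $\mathcal{L}_h$: the components $g_{00}$ and $g_{11}$ in the decomposition \eqref{defW1W0} are precisely $g_0|_{\Ker S}$ and the representative of $\bar{g}_1$ under the identification $Z_1\simeq V_1/\Im S$ induced by the chosen complement. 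Since both restriction to an invariant subspace and passage to a quotient are functorial, $\mathcal{L}$ is a group homomorphism, and its kernel consists exactly of those $h$ for which $g_{00}=\Id_{\Ker S}$ and $g_{11}$ lifts the identity on $V_1/\Im S$, i.e., $g_{11}=\Id_{Z_1}$; this settles (2).

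For the surjectivity half of (1), given $(\alpha,\beta)\in\GL(\Ker S)\times\GL(V_1/\Im S)$, I would construct a preimage by setting $g_0\coloneqq\alpha\oplus\Id_{Z_0}$ and defining $g_1$ by $g_1|_{\Im S}\coloneqq\Id_{\Im S}$ and $g_1|_{Z_1}\coloneqq\tilde\beta$, where $\tilde\beta\in\GL(Z_1)$ is the lift of $\beta$ under $Z_1\simeq V_1/\Im S$. Since $S$ vanishes on $\Ker S$, the intertwining condition $g_1 S=S g_0$ reduces to $g_1|_{\Im S}=\Id_{\Im S}$, which is built into the construction; hence $(g_0,g_1)\in H_S$ and $\mathcal{L}(g_0,g_1)=(\alpha,\beta)$.

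Part (3) splits into two inclusions. The inclusion $\subseteq$ is immediate: if $g_0(W_0)=W_0$ then $g_0|_{\Ker S}$ preserves $W_0$ (as $W_0\leq\Ker S$), and if $g_1(W_1)=W_1$ then $\bar{g}_1$ preserves $W_1/\Im S$ (as $\Im S\leq W_1$). For the reverse inclusion, I would refine the surjectivity construction by choosing the complements compatibly with $(W_0,W_1)$. The main technical point, and the step I expect to carry most of the work, is the choice of $Z_1$: first pick a complement $Y$ of $\Im S$ inside $W_1$, then extend $Y$ to a complement $Z_1$ of $\Im S$ in $V_1$; a dimension count gives $Z_1\cap W_1=Y$, so that under $Z_1\simeq V_1/\Im S$ the subspace $Y$ corresponds to $W_1/\Im S$. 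A stabilizer $\beta$ of $W_1/\Im S$ then lifts to some $\tilde\beta\in\GL(Z_1)$ preserving $Y$, whence $g_1=\Id_{\Im S}\oplus\tilde\beta$ stabilizes $W_1=\Im S\oplus Y$; and since by hypothesis $\alpha$ stabilizes $W_0\leq\Ker S$, the element $g_0=\alpha\oplus\Id_{Z_0}$ stabilizes $W_0$. The resulting $(g_0,g_1)$ then lies in $H_S\cap K_{(W_0,W_1)}$ and satisfies $\mathcal{L}(g_0,g_1)=(\alpha,\beta)$.
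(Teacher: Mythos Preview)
Your proposal is correct. Parts (1) and (2) match the paper's approach closely: the paper invokes Lemma~\ref{LemmaHS} to note that $g_{00}$ and $g_{11}$ are unconstrained (hence surjectivity), and verifies the homomorphism property by a one-line computation of $\mathcal{L}_h\mathcal{L}_{h'}$; your functorial recasting via $(g_0|_{\Ker S},\bar g_1)$ says the same thing.

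For (3), however, you take a genuinely different route. The paper does not construct explicit preimages. Instead it proves the sharper statement that for $h\in H_S$ one has $\mathcal{L}_h\in\Stab(W_0,W_1/\Im S)$ \emph{if and only if} $h\in K_{(W_0,W_1)}$: the key point is that since $g_1(\Im S)=\Im S$, the condition $\bar g_1(W_1/\Im S)=W_1/\Im S$ is equivalent to $g_1(W_1)=W_1$. Combined with the surjectivity from (1), this immediately gives $\mathcal{L}\bigl(K_{(W_0,W_1)}\cap H_S\bigr)=\Stab(W_0,W_1/\Im S)$, with no need to choose compatible complements. Your constructive approach, by contrast, builds a section of $\mathcal{L}$ landing in $K_{(W_0,W_1)}$ by carefully selecting $Z_1$ so that $Z_1\cap W_1$ corresponds to $W_1/\Im S$. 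This is longer but has the advantage of producing an explicit lift, which could be useful if one later needs concrete representatives; the paper's argument is shorter and yields the stronger fact that $\Ker(\mathcal{L})\leq K_{(W_0,W_1)}\cap H_S$ (as noted just after the proposition).
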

\begin{proof}
(1) Surjectivity is obvious: by Lemma \ref{LemmaHS}, the elements $g_{00} \in \GL(\Ker S)$ and $g_{11} \in \GL(Z_1)$ are arbitrary.
\par
Also, given $h=\bigl((g_{00},g_{01},T_0),(g_{01},g_{11},T_1)\bigr)$ and 
$h'=\bigl((g_{00}',g_{01}',T_0'),(g_{01}',g_{11}',T_1')\bigr)$ in $H_S$,
\[
\mathcal{L}_{h}\mathcal{L}_{h'}\left[u_0+(z_1+\Im S)\right]=\mathcal{L}_h\left[g_{00}' u_0+\left(g_{11}'z_1+\Im S\right)\right]=g_{00}g_{00}' u_0+\left(g_{11}g_{11}'z_1+\Im S\right),
\]
and from \eqref{tensp2} we immediately conclude that $\mathcal{L}_{h}\mathcal{L}_{h'}=\mathcal{L}_{hh'}$. 
This shows that $\mathcal{L}$ is a surjective group homomorphism.
\par
(2) The expression for the kernel of $\mathcal{L}$ is immediately deduced from \eqref{e:espressione-l-h}.
\par
(3) Observe that $\mathcal{L}_h$ stabilizes $(W_0,W_1/\Im S)$ if and only if $g_{00}z_0 \in W_0$ 
(resp.\ $g_{11}z_1+\Im S\in W_1/\Im S$, equivalently, $g_{11}z_1\in W_1$) for all $z_0\in W_0$ 
(resp.\ $z_1\in W_1$), that is, $h$ stabilizes $(W_0,W_1)$.
\end{proof}

Suppose that $W_0\leq\Ker S$ and $\Im S\leq W_1$. 
It follows from Proposition \ref{Prop2qJohn}.(2) that the kernel of $\mathcal{L}$ is contained in 
$K_{(W_0,W_1)}\cap H_S$. From the Third Isomorphism Theorem for groups and Proposition \ref{Prop2qJohn}.(1) and (3), 
we then deduce that $\mathcal{L} \colon H_S \to \GL(\Ker S)\times \GL\left(V_1/\Im S\right)$ induces a group isomorphism
\[
H_S/\!\left(K_{(W_0,W_1)}\cap H_S\right) \cong \left(\GL(\Ker S)\times \GL\left(V_1/\Im S\right)\right)\!/\Stab(W_0,W_1/\Im S).
\]
Since $\Stab(W_0,W_1/\Im S) = \Stab(W_0) \times \Stab(W_1/\Im S)$, we deduce the following:

\begin{corollary}\label{Cor2qJohn}
Let $(W_0,W_1)\in Y_{r,s}$ such that $W_0\leq\Ker S$ and $\Im S\leq W_1$, and set $\ell \coloneqq \rk(S)$. 
Then, as homogeneous spaces,
\begin{equation}\label{mathcalFelim}
\begin{split}
H_S/\!\left(K_{(W_0,W_1)}\cap H_S\right) & \cong \GL(\Ker S)/\Stab(W_0) \times \GL\left(V_1/\Im S\right)/\Stab(W_1/\Im S)\\ 
& \equiv \mathcal{G}_{m-\ell,r}\times\mathcal{G}_{n-m-\ell,s-\ell}. 
\end{split}
\end{equation}  
\hfill\qed
\end{corollary}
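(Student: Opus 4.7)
The plan is to exploit the surjective homomorphism $\mathcal{L}$ from Proposition \ref{Prop2qJohn} together with the standard realization of Grassmannians as homogeneous spaces for general linear groups. First, I would assemble all three parts of Proposition \ref{Prop2qJohn}: surjectivity of $\mathcal{L}$ from (1), the explicit description of $\Ker(\mathcal{L})$ from (2), and the identification $\mathcal{L}(K_{(W_0,W_1)} \cap H_S) = \Stab(W_0) \times \Stab(W_1/\Im S)$ from (3). Once I verify that $\Ker(\mathcal{L}) \leq K_{(W_0,W_1)} \cap H_S$, the correspondence theorem (third isomorphism theorem, as invoked in the paragraph preceding the statement) will yield an $H_S$-equivariant bijection between $H_S/(K_{(W_0,W_1)} \cap H_S)$ and the coset space $(\GL(\Ker S) \times \GL(V_1/\Im S))/\Stab(W_0, W_1/\Im S)$.

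The main (and only slightly nontrivial) step is the kernel containment $\Ker(\mathcal{L}) \leq K_{(W_0,W_1)} \cap H_S$, which is where the hypotheses $W_0 \leq \Ker S$ and $\Im S \leq W_1$ enter crucially. Given $h = \bigl((g_{00},g_{01},T_0),(g_{10},g_{11},T_1)\bigr) \in \Ker(\mathcal{L})$, Proposition \ref{Prop2qJohn}.(2) provides $g_{00} = \Id_{\Ker S}$ and $g_{11} = \Id_{Z_1}$. From $W_0 \leq \Ker S$ and the action formula \eqref{squarp3tris}, the element $h$ fixes $W_0$ pointwise. For $W_1$, I would decompose any $w_1 \in W_1$ as $w_1 = u_1 + z_1$ with $u_1 \in \Im S$ and $z_1 \in Z_1$, and compute
\[
h \cdot w_1 - w_1 = (g_{10}u_1 - u_1) + T_1 z_1 \in \Im S \leq W_1,
\]
so that $h \cdot w_1 \in W_1$, establishing the required containment.

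With the kernel containment in hand, the remaining identifications are essentially formal. Since $\GL(\Ker S) \times \GL(V_1/\Im S)$ acts coordinatewise on the pair $(W_0, W_1/\Im S)$ (the first factor acting trivially on the second coordinate, and vice versa), the joint stabilizer factorizes as $\Stab(W_0) \times \Stab(W_1/\Im S)$, yielding the first isomorphism in \eqref{mathcalFelim}. Finally, invoking the orbit-stabilizer theorem separately on each factor, together with the dimension counts $\dim \Ker S = m - \ell$, $\dim(V_1/\Im S) = n - m - \ell$, $\dim W_0 = r$, and $\dim(W_1/\Im S) = s - \ell$, I obtain the identifications $\GL(\Ker S)/\Stab(W_0) \equiv \mathcal{G}_{m-\ell,r}$ and $\GL(V_1/\Im S)/\Stab(W_1/\Im S) \equiv \mathcal{G}_{n-m-\ell,s-\ell}$, completing the chain of equivalences in \eqref{mathcalFelim}.
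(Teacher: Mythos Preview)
Your proposal is correct and follows exactly the argument the paper gives in the paragraph immediately preceding the corollary: the kernel containment $\Ker(\mathcal{L})\leq K_{(W_0,W_1)}\cap H_S$ (which you verify in more detail than the paper does), the Third Isomorphism Theorem combined with Proposition~\ref{Prop2qJohn}.(1) and (3), the factorization of the joint stabilizer, and the final identification with the Grassmannians via the obvious dimension counts.
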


For vector subspaces $U_0\leq V_0$ and $U_1\leq V_1$ set 
\begin{equation}
\label{e:D-y}
D_{(U_0,U_1)}\coloneqq\{\Omega\in\Hom(V_0,V_1)\colon\Ker\Omega\geq U_0, \Im\Omega\leq U_1\}
\end{equation}
and, for every $\Omega\in D_{(U_0,U_1)}$, denote by $\Omega_{U_0,U_1}\in\Hom(V_0/U_0,U_1)$ the quotient operator defined by setting 
\begin{equation}\label{defOmegaV0U0}
\Omega_{U_0,U_1}(v_0+U_0)=\Omega(v_0),
\end{equation} 
for all $v_0+U_0\in V_0/U_0$. Clearly, 
\begin{equation}\label{OmegaU0U1}
\Omega=\iota_{U_1}\Omega_{U_0,U_1}\pi_{U_0},
\end{equation}
and the map $D_{(U_0,U_1)}\ni\Omega\mapsto\Omega_{U_0,U_1}\in \Hom(V_0/U_0,U_1)$ is a vector space isomorphism. The group $H$ 
(cf.\ \eqref{H et A}) acts on $\sqcup_{y\in Y}D_y$ in the obvious way: for $(g_0,g_1)\in H$, $(U_0,U_1)\in Y$, and 
$\Omega\in D_{(U_0,U_1)}$, then $(g_0,g_1)\Omega\equiv g_1\Omega g_0^{-1}\in D_{(g_0 U_0,g_1U_1)}$. 

Fix $y_0=(W_0,W_1)\in Y_{r,s}$. 
Then, for $\Omega\in D_{(W_0,W_1)}$, $k\in K_{(W_0,W_1)}$ as in \eqref{defW1W0}, $w_0\in W_0$, and $z_0\in Z_0$, 
arguing as in \eqref{starp32} we get:
\begin{equation}\label{kOmega}
\begin{split}
[k\Omega](w_0+z_0) & = (g_{10},g_{11},T_1)\Omega(g_{00}^{-1},g_{01}^{-1},-g_{00}^{-1}T_0g_{01})(w_0+z_0)\\
& = (g_{10},g_{11},T_1)\Omega\left[\left(g_{00}^{-1}w_0-g_{00}^{-1}T_0z_0\right)+g_{01}^{-1}z_0\right]\\
\left(g_{00}^{-1}w_0-g_{00}^{-1}T_0z_0\in W_0\leq\Ker \Omega\right)\quad & = (g_{10},g_{11},T_1)\Omega g_{01}^{-1}z_0\\
\left(\Omega g_{01}^{-1}z_0\in\Im\Omega\leq W_1\right)\quad & = g_{10}\Omega g_{01}^{-1}z_0.
\end{split}
\end{equation}
If also $\Omega'\in D_{(W_0,W_1)}$ and $\rk(\Omega')=\rk(\Omega)$, then $\rk\left(\Omega'|_{Z_0}\right)=\rk\left(\Omega|_{Z_0}\right)$ so that there exist $g_{01}\in\GL(Z_0)$ and $g_{10}\in\GL(W_1)$ such that $g_{10}\bigl(\Omega|_{Z_0}\bigr)g_{01}^{-1}=\Omega'|_{Z_0}$. From \eqref{kOmega} it follows that there exists $k\in K_{(W_0,W_1)}$ such that $k\Omega=\Omega'$. Then, for $\ell=0,1,\dotsc,\min\{m-r,s\}$ we choose $\Omega_\ell\in D_{(W_0,W_1)}$ such that $\rk(\Omega_\ell)=\ell$ so that
\begin{equation}\label{ThetaHom}
\{\Omega_\ell\colon \ell=0,1,\dotsc,\min\{m-r,s\}\}
\end{equation}
constitutes a complete set of representatives for the orbits of $K_{(W_0,W_1)}$ on $D_{(W_0,W_1)}$.

\begin{lemma}\label{LemmaOmegaT}
Let $(W_0,W_1)\in Y_{r,s}$. Then, for $U_0\leq V_0$, $U_1\leq V_1$, and $\Omega\in D_{(U_0,U_1)}$, one has
$\Tr[\Omega T]=0$ for all $T\in C$ (cf.\ \eqref{Cinthiscase}) if and only if $\Ker\Omega\geq (W_0+U_0)$ and $\Im\Omega\leq W_1\cap U_1$, that is, if and only if $\Omega\in D_{(W_0+U_0,W_1\cap U_1)}$.
\end{lemma}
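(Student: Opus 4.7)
The plan is to choose direct-sum decompositions $V_0 = W_0 \oplus Z_0$ and $V_1 = W_1 \oplus Z_1$, write both $T$ and $\Omega$ as $2 \times 2$ block operators with respect to these decompositions, and then exploit the non-degeneracy of the trace pairing on $\Hom$-spaces block by block.

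With the convention that the $0$ block corresponds to $W_\bullet$ and the $1$ block to $Z_\bullet$, a map $T \in \Hom(V_1, V_0)$ decomposes into four blocks $T_{ij}$ (with $T_{10}\colon W_1 \to Z_0$ and the obvious meaning for the others), and similarly $\Omega \in \Hom(V_0, V_1)$ decomposes into four blocks $\Omega_{ij}$ (with $\Omega_{00}\colon W_0\to W_1$, etc.). By the description of $C$ in \eqref{Cinthiscase}, the condition $T \in C$ is precisely $T_{10} = 0$, while $T_{00}, T_{01}, T_{11}$ range independently over $\Hom(W_1, W_0)$, $\Hom(Z_1, W_0)$, and $\Hom(Z_1, Z_0)$ respectively.

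Next, I would expand $\Tr[\Omega T] = \Tr[T\Omega]$ as an endomorphism of $V_0$ block by block, obtaining
\[
\Tr[T\Omega] = \Tr[T_{00}\Omega_{00}] + \Tr[T_{01}\Omega_{10}] + \Tr[T_{10}\Omega_{01}] + \Tr[T_{11}\Omega_{11}].
\]
Restricting to $T \in C$ kills the third summand, and then the non-degeneracy of the trace pairing $\Hom(E,F) \times \Hom(F,E) \to \mathbb{F}$ applied separately to each of the three surviving summands shows that $\Tr[\Omega T] = 0$ for every $T \in C$ if and only if $\Omega_{00} = \Omega_{10} = \Omega_{11} = 0$.

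It remains to interpret these three vanishings geometrically: $\Omega_{00}=0$ together with $\Omega_{10}=0$ says that $\Omega|_{W_0} = 0$, i.e.\ $W_0 \leq \Ker \Omega$, while $\Omega_{10} = 0$ together with $\Omega_{11} = 0$ says that $\Omega(V_0) \subseteq W_1$, i.e.\ $\Im \Omega \leq W_1$. Combining these with the standing hypothesis $\Omega \in D_{(U_0, U_1)}$ (so $U_0 \leq \Ker \Omega$ and $\Im \Omega \leq U_1$) yields $\Ker \Omega \geq W_0 + U_0$ and $\Im \Omega \leq W_1 \cap U_1$, i.e.\ $\Omega \in D_{(W_0+U_0, W_1 \cap U_1)}$. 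There is no genuine obstacle here: the argument amounts to one application of non-degeneracy of the trace form; the only care point is the bookkeeping of the eight blocks, together with the (automatic) observation that the conclusion is intrinsic, since it is formulated without reference to the auxiliary complements $Z_0$ and $Z_1$.
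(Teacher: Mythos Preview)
Your argument is correct. The only difference from the paper's proof is the granularity of the block decomposition: the paper splits $V_0$ and $V_1$ into four pieces each, adapted simultaneously to $(W_0,W_1)$ and $(U_0,U_1)$, so that the hypothesis $\Omega\in D_{(U_0,U_1)}$ is built into the shape of $\Omega$ from the outset (only four of the sixteen blocks of $\Omega$ survive), and the trace computation then reads off $\Omega\in D_{(W_0+U_0,\,W_1\cap U_1)}$ directly. You instead use only a $2\times 2$ decomposition adapted to $(W_0,W_1)$, obtain $W_0\leq\Ker\Omega$ and $\Im\Omega\leq W_1$ from non-degeneracy, and then invoke $\Omega\in D_{(U_0,U_1)}$ at the end to upgrade these to $W_0+U_0$ and $W_1\cap U_1$. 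Your route is slightly leaner for this lemma in isolation; the paper's finer decomposition has the advantage that the same $X_j$'s and $Y_j$'s are reused in the proofs of the next lemma and the theorem that follows.
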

\begin{proof}
Consider the decomposition
\begin{equation}\label{decV0bis}
V_0=X_0\oplus X_1\oplus X_2\oplus X_3,
\end{equation}
where $X_0\coloneqq W_0\cap U_0$, $X_1$ (resp.\ $X_2$) is a complementary subspace of $X_0$ in $W_0$ (resp.\ $U_0$) so that $W_0=X_0\oplus X_1$ (resp.\ $U_0=X_0\oplus X_2$), and $X_3$ is a complementary subspace of $U_0 + W_0 = X_0\oplus X_1 \oplus X_2$ in $V_0$. 
We may also suppose that $Z_0=X_2\oplus X_3$ (the choice of the complementary subspace $Z_0$ in \eqref{defW1W0} is arbitrary). 
Analogously, consider the decomposition
\begin{equation}\label{decV1bis}
V_1=Y_0\oplus Y_1\oplus Y_2\oplus Y_3,
\end{equation}
where $Y_0\coloneqq W_1\cap U_1$, $Y_1$ (resp.\ $Y_2$) is a complementary subspace of $Y_0$ in $W_1$ (resp.\ $U_1$) so that 
$W_1=Y_0\oplus Y_1$ (resp.\ $U_1=Y_0\oplus Y_2$), and $Y_3$ is a complementary subspace of $U_1 + W_1 = Y_0\oplus Y_1 \oplus Y_2$ in $V_1$. 
Once again, we may also suppose that $Z_1=Y_2\oplus Y_3$. 
Then every $T\in C$ (resp.\ $\Omega\in D_{U_0,U_1}$) admits a decomposition 
\[
T=\left(\bigoplus_{i=0}^1\bigoplus_{j=0}^1T_{ij}\right)\bigoplus\left(\bigoplus_{i=2}^3\bigoplus_{j=0}^3T_{ij}\right),
\]
with $T_{ij}\in\Hom(Y_i,X_j)$ (resp.\ 
\[
\Omega=\Omega_{10}\oplus\Omega_{12}\oplus\Omega_{30}\oplus\Omega_{32},
\] 
with $\Omega_{ji}\in\Hom(X_j,Y_i)$) and therefore
\[
\Tr[\Omega T]=\Tr[\Omega_{10} T_{01}]+\Tr[\Omega_{12} T_{21}]+\Tr[\Omega_{32} T_{23}].
\]
It follows that $\Tr[\Omega T]=0$ for all $T\in C$ if and only if $\Omega_{10}=\Omega_{12}=\Omega_{32}=0$, equivalently, 
$\Omega\equiv \Omega_{30}\in \Hom(X_3,Y_0)$, and this holds if and only if 
$\Ker\Omega\geq X_0\oplus X_1\oplus X_2\equiv U_0+W_0$ and $\Im \Omega\leq Y_0=U_1\cap W_1$.
\end{proof}

For $U_0\leq V_0$ and $U_1\leq V_1$ as above, and $\ell \geq 0$, denote by $D_{(U_0,U_1)}^\ell$ the set of all 
$\Omega\in D_{(U_0,U_1)}$ such that $\rk(\Omega) = \ell$. By means of \eqref{kOmega}, we can compute
the action of an element $k=(k_0,k_1)\in K_{(W_0,W_1)}$ as in \eqref{defW1W0} on an operator $\Omega\in D_{(W_0+U_0,W_1\cap U_1)}$. Clearly, $k\Omega\equiv k_1\Omega k_0^{-1}\in D_{(W_0+k_0U_0,W_1\cap k_1U_1)}$ so that, using the decompositions in the proof of Lemma \ref{LemmaOmegaT}, we deduce that if $x_i\in k_0X_i$, $i=0,1,2,3$, then 
$\left[k\Omega\right](x_0+x_1+x_2+x_3)=\left(g_{10}|_{W_1\cap U_1}\right)\Omega_{30} g_{01}^{-1}x_3$, that is,
\begin{equation}\label{kOmegax}
[k\Omega]_{W_0+k_0U_0,W_1\cap k_1U_1}=\left(g_{10}|_{W_1\cap U_1}\right)\Omega_{W_0+U_0,W_1\cap U_1} \left(g_{01}^{-1}|_{V_0/(W_0+k_0U_0)}\right),
\end{equation}
where $g_{01}^{-1}|_{V_0/(W_0+k_0U_0)}$ is defined as in \eqref{2starp28}.
\par
With $i$ and $h$ as in \eqref{hilvar2} and for
\begin{equation}\label{hilvar}
0\leq \ell\leq \min\{i,m-2r+h\},
\end{equation}
we choose $\Omega_{h,i}^\ell\in D_{(W_0+U_{0,h},W_1\cap U_{1,i})}^\ell$. It follows from \eqref{kOmegax} that
the set $\Gamma'$ of all triples
\begin{equation}\label{defGammaprime}
\left(U_{0,h},U_{1,i},\Omega_{h,i}^\ell\right)
\end{equation}
is a complete set of representatives for the orbits of $K$ on $\{(U_0,U_1,\Omega)\colon (U_0,U_1)\in Y_{r,s},\Omega\in D_{(W_0+U_0,W_1\cap U_1)}\}$.

We now examine the condition in Lemma \ref{LemmaGammatheta} in the present setting.

\begin{lemma}\label{lemmaHKorbits}
Let $\Omega_\ell \in D_{(W_0,W_1)}$ as in \eqref{ThetaHom}. Then the $H_{\Omega_\ell}$-orbit of $(W_0,W_1)$ intersects the 
$K_{(W_0,W_1)}$-orbit of $(U_{0h},U_{1i})$ (cf.\ \eqref{hilvar2}) if and only if $h\geq 2r-m+ \ell$ and $i\geq \ell$.
\end{lemma}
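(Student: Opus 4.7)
The plan is to use the parametrization of $H_{\Omega_\ell}$ from Lemma \ref{LemmaHS} to express the two intersection dimensions and then determine their attainable ranges.

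First, I would set $N \coloneqq \Ker \Omega_\ell$ and $M \coloneqq \Im \Omega_\ell$, so that $W_0 \leq N$, $M \leq W_1$, $\dim N = m - \ell$, and $\dim M = \ell$. I would then choose the decompositions $V_0 = N \oplus Z_0$ and $V_1 = M \oplus Z_1$ as in \eqref{defW1W0}, arranging $Z_1$ to contain a complement $W_1'$ of $M$ in $W_1$, so that $W_1 = M \oplus W_1'$ with $W_1' \leq Z_1$ and $\dim W_1' = s - \ell$. By Lemma \ref{LemmaHS}, an element $(g_0,g_1) \in H_{\Omega_\ell}$ is parametrized as $((g_{00},g_{01},T_0),(g_{10},g_{11},T_1))$ subject to $g_{10}\,\Omega_\ell|_{Z_0} = \Omega_\ell|_{Z_0}\, g_{01}$, which couples $g_{10}$ and $g_{01}$ (via the isomorphism $\Omega_\ell|_{Z_0}\colon Z_0 \to M$) but leaves $g_{00} \in \GL(N)$ and $g_{11} \in \GL(Z_1)$ completely free.

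Next, the $K_{(W_0,W_1)}$-orbit of $(U_{0h},U_{1i})$ is characterized by $\dim(\cdot \cap W_0) = h$ and $\dim(\cdot \cap W_1) = i$, so I must determine for which $(h,i)$ there exists $(g_0,g_1) \in H_{\Omega_\ell}$ with $\dim(g_0W_0 \cap W_0) = h$ and $\dim(g_1 W_1 \cap W_1) = i$. A direct computation using \eqref{squarp3tris} shows that $g_0W_0 = g_{00}(W_0) \leq N$ (since $W_0 \leq N$), while the careful analysis of $g_1(m + w')$ for $m \in M$ and $w' \in W_1' \leq Z_1$ gives $g_1W_1 = M \oplus g_{11}(W_1')$, whence
\[
g_1W_1 \cap W_1 = M \oplus \bigl(g_{11}(W_1') \cap W_1'\bigr).
\]

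Finally, the two intersection dimensions are governed by standard dimension counting for pairs of subspaces. Since $W_0$ and $g_{00}(W_0)$ are $r$-dimensional subspaces of $N$ (which has dimension $m - \ell$), $\dim(g_{00}(W_0) \cap W_0)$ ranges exactly over $[\max\{0,\, 2r - m + \ell\},\, r]$ as $g_{00}$ varies in $\GL(N)$. Similarly, $W_1'$ and $g_{11}(W_1')$ are $(s-\ell)$-dimensional subspaces of $Z_1$ (dimension $n-m-\ell$), so $\dim(g_{11}(W_1') \cap W_1')$ ranges over $[\max\{0,\, 2s - n + m - \ell\},\, s-\ell]$. Adding $\ell$ and combining with the standing ranges in \eqref{hilvar2}, the only new constraints are $h \geq 2r - m + \ell$ and $i \geq \ell$; since $g_{00}$ and $g_{11}$ can be chosen independently, both conditions can be satisfied simultaneously. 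The main obstacle is the computation of $g_1W_1 \cap W_1$: the crucial trick is to choose $Z_1 \supseteq W_1'$ so that the $M$-valued perturbation coming from $T_1$ is harmlessly absorbed into $W_1$, and the only genuine contribution to varying the intersection comes from $g_{11}$ acting on $W_1' \leq Z_1$; once this decomposition is in place, the rest is routine.
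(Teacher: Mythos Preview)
Your argument is correct. Both directions follow cleanly from your explicit computation of $g_0W_0 = g_{00}(W_0)$ and $g_1W_1 = M \oplus g_{11}(W_1')$, together with the freedom of $g_{00}\in\GL(N)$ and $g_{11}\in\GL(Z_1)$ guaranteed by Lemma~\ref{LemmaHS}.

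The paper's proof takes a slightly different route. For the forward direction it argues abstractly that any $(U_0,U_1)$ in the $H_{\Omega_\ell}$-orbit of $(W_0,W_1)$ must satisfy $U_0\leq\Ker\Omega_\ell$ and $\Im\Omega_\ell\leq U_1$, and then applies the Grassmann identity; for the converse it first uses an element of $K_{(W_0,W_1)}$ to move $(U_{0h},U_{1i})$ into position and then invokes the surjectivity of the homomorphism $\mathcal{L}\colon H_{\Omega_\ell}\to\GL(\Ker\Omega_\ell)\times\GL(V_1/\Im\Omega_\ell)$ from Proposition~\ref{Prop2qJohn}. Your approach bypasses Proposition~\ref{Prop2qJohn} entirely by working directly with the parametrization of Lemma~\ref{LemmaHS}; the choice $Z_1\supseteq W_1'$ is the concrete device that replaces the quotient $V_1/\Im\Omega_\ell$ and makes the intersection $g_1W_1\cap W_1$ split cleanly. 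The paper's argument is a bit more conceptual (and reuses machinery already in place), while yours is more self-contained and computational; both ultimately rest on the same fact that $g_{00}$ and $g_{11}$ vary independently over the full general linear groups of $\Ker\Omega_\ell$ and of a complement of $\Im\Omega_\ell$.
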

\begin{proof}
Recall that $\Omega_\ell \in \Hom(V_0,V_1)$ with $\Ker \Omega_\ell \geq W_0$ and ${\rm Im} \Omega_\ell \leq W_1$,
and $(g_0,g_1) \in H$ fixes $\Omega_\ell$ if $g_1 \Omega_\ell g_0^{-1} = \Omega_\ell$.
It follows that if $(g_0,g_1)\in H_{\Omega_\ell}$ and $(U_0,U_1)=(g_0,g_1)(W_0,W_1)$, 
then, on the one hand, $\Omega_\ell U_0 = g_1 \Omega_\ell g_0^{-1} (g_0 W_0) = g_1 \Omega_\ell W_0 = 0$ (since $W_0 \leq \Ker \Omega_\ell$)
so that $U_0\leq\Ker\Omega_\ell$, and, on the other hand, 
$\Omega_\ell V_0 = g_1 \Omega_\ell g_0^{-1} V_0 \leq g_1 \Omega_\ell V_0 \leq g_1 W_1 = U_1$, so that
$\Im\Omega_\ell\leq U_1$. (Alternatively, recall that $H_{\Omega_\ell}\leq K_{(\Ker\Omega_\ell,\Im\Omega_\ell)}$, by Lemma \ref{LemmaHS}.)
Consequently, if the orbits in the statement intersect, since $W_0 + U_0 \leq \Ker \Omega_\ell$, $h = \dim(W_0\cap U_{0,h})$,  
$r = \dim W_0 = \dim U_0$, and $m = \dim V_0$, from the Grassmann identity we deduce that
\[
h = \dim(W_0\cap U_{0,h}) = \dim(W_0\cap U_0)=2r-\dim(W_0+U_0)\geq 2r-\dim\Ker\Omega_\ell=2r-(m-\ell)
\]
and, since ${\rm Im}\Omega_\ell \leq W_1\cap U_1$, trivially
\[
i = \dim(W_1\cap U_{1,i}) = \dim(W_1\cap U_1) \geq \dim({\rm Im}\Omega_\ell) = \rk(\Omega_\ell) = \ell.
\]
Conversely, suppose that $h\geq 2r-m+\ell$ (so that, once again by the Grassmann identity, 
$\dim(W_0+ U_{0h})=2r-h\leq m- \ell=\dim(\Ker\Omega_\ell)$) and 
$i\geq \ell$ (so that $\dim(W_1 \cap U_{1,i}) = i \geq \ell = \rk(\Omega_\ell) = \dim({\rm Im}\Omega_\ell)$).
Then we can find $k=(k_0,k_1)\in K_{(W_0,W_1)}$ such that 
\[
k_0(W_0+ U_{0h})\equiv W_0+(k_0U_{0h})\leq \Ker\Omega_\ell\quad\text{ and }\quad k_1(W_1\cap U_{1i})\equiv W_1\cap (k_1U_{1i})\geq \Im\Omega_\ell.
\] 
From Proposition \ref{Prop2qJohn} it follows that $H_{\Omega_\ell}$ ``induces'' all the operators in $\GL(\Ker \Omega_\ell)\times \GL(V_1/\Im\Omega_\ell)$, so that there exists $(g_0,g_1)\in H_{\Omega_\ell}$ such that $(g_0,g_1)(W_0,W_1)=(k_0U_{0h},k_1U_{1i})$. Now, the latter
clearly belongs to both the $H_{\Omega_\ell}$-orbit of $(W_0,W_1)$ and the $K_{(W_0,W_1)}$-orbit of $(U_{0h},U_{1i})$.
\end{proof}

Now suppose that $(U_0,U_1,\Psi), (W_0,W_1,\Phi)\in X$. Since $U_0\leq U_0+W_0\leq V_0$ we have a canonical projection $\pi_{U_0,W_0}\colon V_0/U_0\rightarrow V_0/(U_0+W_0)$, whose kernel is $(U_0+W_0)/U_0\simeq W_0/(U_0\cap W_0)$; similarly, we define $\pi_{W_0,U_0}\colon V_0/W_0\rightarrow V_0/(U_0+W_0)$.  

\begin{equation}
\label{bigdiagram}
\qquad\xymatrix@R=10pt{
&V_0/U_0\ar[dr]^{\pi_{U_0,W_0}}&\\
U_1\cap W_1\ar[ur]^{\Psi|_{U_1\cap W_1}}\ar[dr]_{\Phi|_{U_1\cap W_1}}& & \quad V_0/(U_0+W_0)\\
&V_0/W_0\ar[ur]_{\pi_{W_0,U_0}}
}
\end{equation}

\begin{lemma}
Let $\Omega\in D_{(W_0+U_0,W_1\cap U_1)}$ and $\Psi\in B_{(U_0,U_1)}$. Then
\begin{equation}\label{TrOmPsi}
\Tr\left[\Omega_{U_0,U_1}\Psi\right]=\Tr\left[\Omega_{U_0+W_0,U_1\cap W_1}\left(\pi_{U_0,W_0}\Psi|_{U_1\cap W_1}\right)\right].
\end{equation}
\end{lemma}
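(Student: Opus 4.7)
The plan is to exploit that the hypothesis $\Omega\in D_{(W_0+U_0,W_1\cap U_1)}$ gives both (i) $W_0+U_0\subseteq\Ker\Omega$, so that the quotient operator $\Omega_{U_0,U_1}\colon V_0/U_0\to U_1$ actually factors further through the smaller quotient $V_0/(U_0+W_0)$; and (ii) $\Im\Omega\subseteq W_1\cap U_1$, so that the endomorphism $\Omega_{U_0,U_1}\Psi$ of $U_1$ has image inside the subspace $W_1\cap U_1$. Then the cyclicity of the trace, in the guise $\Tr(AB)=\Tr(BA)$ for $A\colon V\to W$ and $B\colon W\to V$, will convert the trace over $U_1$ on the left-hand side into the trace over $W_1\cap U_1$ on the right-hand side.

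First I would verify the factorization
\[
\Omega_{U_0,U_1}=\iota\circ\Omega_{U_0+W_0,U_1\cap W_1}\circ\pi_{U_0,W_0},
\]
where $\iota\colon W_1\cap U_1\hookrightarrow U_1$ is the inclusion. This is a direct consequence of \eqref{defOmegaV0U0}: evaluating both sides at $v_0+U_0\in V_0/U_0$ yields $\Omega(v_0)$, which lies in $\Im\Omega\subseteq W_1\cap U_1\subseteq U_1$ by (ii), while the existence of the middle map $\Omega_{U_0+W_0,U_1\cap W_1}$ on $V_0/(U_0+W_0)$ is legitimate precisely because $W_0+U_0\subseteq\Ker\Omega$ by (i).

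Next I set
\[
M\coloneqq\Omega_{U_0+W_0,U_1\cap W_1}\circ\pi_{U_0,W_0}\circ\Psi\colon U_1\longrightarrow W_1\cap U_1,
\]
so that $\Omega_{U_0,U_1}\Psi=\iota\circ M$ as endomorphisms of $U_1$. Trace cyclicity then gives
\[
\Tr\bigl[\Omega_{U_0,U_1}\Psi\bigr]=\Tr[\iota\circ M]=\Tr[M\circ\iota],
\]
and the composition $M\circ\iota\colon W_1\cap U_1\to W_1\cap U_1$ is, by construction and the definition of restriction, exactly $\Omega_{U_0+W_0,U_1\cap W_1}\circ\pi_{U_0,W_0}\circ\Psi|_{U_1\cap W_1}$. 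This gives the right-hand side of \eqref{TrOmPsi}.

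There is really no serious obstacle: the argument is a two-step factorization followed by one application of $\Tr(AB)=\Tr(BA)$. The only care needed is bookkeeping — keeping track of which domain/codomain each map carries, and in particular matching the factorization through $V_0/(U_0+W_0)$ with the definition of $\Omega_{U_0+W_0,U_1\cap W_1}$ given in \eqref{defOmegaV0U0}, so that the commutative triangle in diagram \eqref{bigdiagram} is used correctly.
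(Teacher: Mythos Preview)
Your proof is correct and takes a genuinely different route from the paper's. The paper proceeds by choosing explicit direct-sum decompositions $V_0=X_0\oplus X_1\oplus X_2\oplus X_3$ and $V_1=Y_0\oplus Y_1\oplus Y_2\oplus Y_3$ (as in the proof of Lemma~\ref{LemmaOmegaT}), identifies $\Omega$ with the single block $\Omega_{30}\in\Hom(X_3,Y_0)$, decomposes $\Psi$ accordingly, and then reads off that only the block $\widetilde{\Psi}_{03}$ contributes to the trace. Your argument is coordinate-free: you observe the factorization $\Omega_{U_0,U_1}=\iota\circ\Omega_{U_0+W_0,U_1\cap W_1}\circ\pi_{U_0,W_0}$ directly from the hypotheses on $\Ker\Omega$ and $\Im\Omega$, and then apply the rectangular trace identity $\Tr(\iota M)=\Tr(M\iota)$ once. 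This avoids any choice of complements and is arguably cleaner; the paper's block decomposition, on the other hand, makes the vanishing of all cross-terms visible at a glance and reuses machinery already set up for the preceding lemma.
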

\begin{proof}
Arguing as in the proof of Lemma \ref{LemmaOmegaT} and using the same notation therein, one easily finds that 
\[
\Tr\left[\Omega_{U_0,U_1}\Psi\right]=\Tr\left[\Omega_{30}\widetilde{\Psi}_{03}\right],
\]
where $\widetilde{\Psi}_{03}\colon Y_0\rightarrow X_3$ in the decomposition 
\[
\Psi=\Psi_{01}+\Psi_{03}+\Psi_{21}+\Psi_{23}\quad\text{ with }\quad\Psi_{ij}\in\Hom\left(Y_i,(X_j\oplus U_0)/U_0\right)
\]
and $\widetilde{\Psi}_{03}\colon Y_0\rightarrow X_3$ is defined by means of $(X_3\oplus U_0)/U_0\simeq X_3$. 
\end{proof}

For $\Psi \in B_{(U_0,U_1)} = \Hom(U_1,V_0/U_0)$ and $\Phi \in B_{(W_0,W_1)} = \Hom(W_1,V_0/W_0)$ set (cf.\ \eqref{bigdiagram})
\begin{equation}\label{defPsicapPhi}
\Psi\cap \Phi\coloneqq\{v\in U_1\cap W_1\colon \pi_{U_0,W_0}[\Psi(v)]=\pi_{W_0,U_0}[\Phi(v)]\}
\end{equation}
and define $\delta\colon X\times X\rightarrow \mathbb{N}\times\mathbb{N}\times\mathbb{N}$ by 
\begin{equation}\label{defdelta}
\delta\bigl((U_0,U_1,\Psi), (W_0,W_1,\Phi)\bigr)\coloneqq \Bigl(\dim (U_0\cap W_0),\dim (U_1\cap W_1),\dim(\Psi\cap \Phi) \Bigr).
\end{equation}

\begin{theorem}
The pairs $\bigl((U_0,U_1,\Psi), (W_0,W_1,\Phi)\bigr)$ and $\bigl((U_0',U_1',\Psi'), (W_0',W_1',\Phi')\bigr)$ belong to the same 
diagonal $(H \ltimes A)$-orbit in $X\times X$ if and only if 
\begin{equation}\label{deltaeq}
\delta\bigl((U_0,U_1,\Psi), (W_0,W_1,\Phi)\bigr)=\delta\bigl((U_0',U_1',\Psi'), (W_0',W_1',\Phi')\bigr).
\end{equation}
\end{theorem}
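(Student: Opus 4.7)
The plan is to prove the two implications separately, with the converse being the substantive part.

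\emph{Forward direction.} Invariance of the first two components of $\delta$ under $(g_0,g_1,T)\in H\ltimes A$ is immediate, since $g_0$ and $g_1$ are bijections. For the third, a direct expansion from \eqref{squarp3bis} shows that for $v'=g_1u$ with $u\in U_1\cap W_1$,
\[
\pi_{g_0U_0,g_0W_0}(\Psi^{\mathrm{new}}(v'))-\pi_{g_0W_0,g_0U_0}(\Phi^{\mathrm{new}}(v')) = \bar g_0\bigl[\pi_{U_0,W_0}(\Psi(u))-\pi_{W_0,U_0}(\Phi(u))\bigr],
\]
where $\bar g_0$ is the isomorphism $V_0/(U_0+W_0)\to V_0/(g_0(U_0+W_0))$ induced by $g_0$; crucially the $T$-contributions from $\Psi^{\mathrm{new}}$ and $\Phi^{\mathrm{new}}$ cancel. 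Hence $\Psi^{\mathrm{new}}\cap\Phi^{\mathrm{new}}=g_1(\Psi\cap\Phi)$, and dimensions are preserved.

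\emph{Converse direction, Step 1.} Acting by some $(g_0,g_1,0)\in H$, I would transport $((U_0',U_1'),(W_0',W_1'))$ to $((U_0,U_1),(W_0,W_1))$. This is possible precisely because the first two components of $\delta$ agree: $\GL(V_0)$ acts transitively on pairs of $r$-dimensional subspaces of $V_0$ with a fixed intersection dimension, and analogously for $\GL(V_1)$. By the forward direction, the third component is carried along unchanged.

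\emph{Step 2.} With the four subspaces now aligned, introduce the compatibility map
\[
f_\Psi^\Phi\colon U_1\cap W_1\longrightarrow V_0/(U_0+W_0),\qquad f_\Psi^\Phi(v)\coloneqq\pi_{U_0,W_0}(\Psi(v))-\pi_{W_0,U_0}(\Phi(v)),
\]
so that $\Psi\cap\Phi=\Ker f_\Psi^\Phi$ (cf.\ \eqref{defPsicapPhi}). Let $K^{*}\leq H$ denote the stabilizer of $(U_0,U_1,W_0,W_1)$. A direct calculation (analogous to the forward direction) shows that $(g_0,g_1)\in K^{*}$ acts on compatibility maps by $f_\Psi^\Phi\mapsto \bar g_0\circ f_\Psi^\Phi\circ\bar g_1^{-1}$, where $\bar g_0\in\GL(V_0/(U_0+W_0))$ and $\bar g_1\in\GL(U_1\cap W_1)$ are the induced operators. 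Choosing splittings $V_0=(U_0\cap W_0)\oplus A'\oplus B'\oplus Q$ (with $U_0=(U_0\cap W_0)\oplus A'$, $W_0=(U_0\cap W_0)\oplus B'$) and $V_1=(U_1\cap W_1)\oplus E_U\oplus E_W\oplus E$ (with $U_1=(U_1\cap W_1)\oplus E_U$, $W_1=(U_1\cap W_1)\oplus E_W$), one lifts arbitrary elements of $\GL(V_0/(U_0+W_0))\times\GL(U_1\cap W_1)$ to $K^{*}$. Since orbits of this product on linear maps are classified by rank, the assumption $\dim(\Psi\cap\Phi)=\dim(\Psi'\cap\Phi')$ permits applying some element of $K^{*}$ that enforces $f_\Psi^\Phi=f_{\Psi'}^{\Phi'}$.

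\emph{Step 3.} Once the compatibility maps coincide, $(\Psi'-\Psi,\Phi'-\Phi)$ satisfies the pointwise compatibility condition on $U_1\cap W_1$. Using the decomposition of $V_1$ above, define $T\in A=\Hom(V_1,V_0)$ by: on $U_1\cap W_1$, pick $Tv$ in the intersection of the prescribed coset of $U_0$ and the prescribed coset of $W_0$ (non-empty by compatibility); on $E_U$, in the coset prescribed modulo $U_0$; on $E_W$, modulo $W_0$; on $E$, arbitrarily. Then $\pi_{U_0}T\iota_{U_1}=\Psi'-\Psi$ and $\pi_{W_0}T\iota_{W_1}=\Phi'-\Phi$, so $(1_H,T)\in H\ltimes A$ completes the transport. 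The main obstacle throughout is the surjectivity $K^{*}\twoheadrightarrow\GL(V_0/(U_0+W_0))\times\GL(U_1\cap W_1)$ in Step 2, which rests on the flexibility afforded by the complements $Q$ and $E_U,E_W,E$, together with the observation that any element of the target lifts to an operator preserving all four subspaces.
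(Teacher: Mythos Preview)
Your proof is correct and rests on the same essential mechanism as the paper's: the invariant beyond the subspace intersections is the rank of the ``compatibility map'' $f_\Psi^\Phi\colon U_1\cap W_1\to V_0/(U_0+W_0)$, and once that rank is matched one can solve for $T$ because the obstruction vanishes.

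The organizational route differs. The paper carries out a single monolithic construction: it fixes compatible direct-sum decompositions of $V_0$ and $V_1$ on both sides, chooses $g_0,g_1$ to send components to components, and then assembles $T$ block by block. The crucial point is that one block ($\pi_3'T\iota_1'$) must satisfy two conditions simultaneously; the paper resolves this by leaving $g_0|_{X_3}$ undetermined until the end and then choosing it so that the two conditions coincide (via an injectivity argument equivalent to your rank observation). Your three-step reduction --- first align the four subspaces by $H$, then align the compatibility maps using the surjection $K^{*}\twoheadrightarrow\GL(V_0/(U_0+W_0))\times\GL(U_1\cap W_1)$ and rank classification, finally solve for $T$ --- isolates the same obstruction more transparently and invokes the standard fact that $\GL\times\GL$ acts transitively on linear maps of fixed rank rather than rediscovering it inside a coordinate calculation. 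The paper's approach is more self-contained and explicit; yours is more modular and makes clearer \emph{why} the third component of $\delta$ is a complete invariant.
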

\begin{proof}
The pairs in the statement belong to the same diagonal $(H \ltimes A)$-orbit in $X\times X$ if there exists 
$(g_0,g_1,T)\in H \ltimes A$ such that
\begin{equation}\label{deltaeq2}
(g_0,g_1,T)(U_0,U_1,\Psi)=(U_0',U_1',\Psi')\text{ and } (g_0,g_1,T)(W_0,W_1,\Phi)=(W_0',W_1',\Phi'),
\end{equation}
that is (cf.\ \eqref{squarp3bis}),
\begin{equation}\label{deltaeq3}
\begin{cases}
g_0U_0=U_0',\quad g_0W_0=W_0'\\
g_1U_1=U_1',\quad g_1W_1=W_1'\\
\pi_{U_0'}T\iota_{U_1'}+g_0\Psi g_1^{-1}=\Psi'\\
\pi_{W_0'}T\iota_{W_1'}+g_0\Phi g_1^{-1}=\Phi'.\\
\end{cases}
\end{equation}

If this is the case, we immediately have that $\dim(U_0 \cap W_0) = \dim(U_0' \cap W_0')$ and 
$\dim(U_1 \cap W_1) = \dim(U_1' \cap W_1')$. We claim that $g_1$ yields a linear isomorphism $\Psi\cap \Phi \to \Psi'\cap \Phi'$.
Indeed, for $v \in \Psi\cap \Phi$ we have
\[
\begin{split}
\pi_{U_0',W_0'}[\Psi'(g_1v)] & = \pi_{g_0U_0,g_0W_0}[\pi_{g_0U_0}T(g_1v) + g_0\Psi(v)] \\
& = \pi_{g_0U_0,g_0W_0}[g_0\Psi(v)] \\
& = g_0\pi_{U_0,W_0}[\Psi(v)]\\
& = g_0\pi_{W_0,U_0}[\Phi(v)] \ \ \mbox{ \ \ \ \ \ (as $v \in \Psi\cap \Phi$)}\\
& = \pi_{g_0W_0,g_0U_0}[g_0\Phi(v)]\\
& = \pi_{g_0W_0,g_0U_0}[\pi_{g_0W_0}T(g_1v) + g_0\Phi(v)]\\
& = \pi_{W_0',U_0'}[\Phi'(g_1v)],
\end{split}
\]
and the claim follows. We deduce that $\dim(\Psi\cap \Phi) = \dim(g_1(\Psi\cap \Phi)) = \dim(\Psi'\cap \Phi')$, so that
\eqref{deltaeq} holds, showing the only if part.
\par
Conversely, assume \eqref{deltaeq}. Decompose $V_0$ as
\begin{equation}\label{decV0}
V_0=X_0\oplus X_1\oplus X_2\oplus X_3= X_0'\oplus X_1'\oplus X_2'\oplus X_3',
\end{equation}
where 
\begin{itemize}
\item $X_0\coloneqq U_0\cap W_0$ and $X_0'\coloneqq U_0'\cap W_0'$,
\item $X_1,X_2,X'_1$ and $X_2'$ are complementary subspaces such that 
$U_0=X_0\oplus X_1$, $U_0'=X_0'\oplus X_1'$, $W_0=X_0\oplus X_2$, and $W_0'=X_0'\oplus X_2'$, 
\item $X_3$ and $X_3'$ are complementary subspaces such that \eqref{decV0} holds true. 
\end{itemize}
Observe that by our hypothesis \eqref{deltaeq} one has that $\dim X_0=\dim X_0'$ so that $\dim X_j=\dim X_j'$, for $j=1,2,3$. 
We then decompose $V_1$ as
\begin{equation}\label{decV1}
V_1=Z_0\oplus Z_1\oplus Z_2\oplus Z_3\oplus Z_4= Z_0'\oplus Z_1'\oplus Z_2'\oplus Z_3'\oplus Z_4'
\end{equation}
where 
\begin{itemize}
\item $Z_0\coloneqq \Psi\cap \Phi$ and $Z_0'\coloneqq \Psi'\cap \Phi'$, 
\item $Z_1,Z_2,Z_3,Z'_1,Z_2'$, and $Z_3'$ are complementary subspaces such that $U_1\cap W_1=Z_0\oplus Z_1$, $U_1'\cap W_1'=Z_0'\oplus Z_1'$, $U_1=Z_0\oplus Z_1\oplus Z_2$, $U_1'=Z_0'\oplus Z_1'\oplus Z_2'$, $W_1=Z_0\oplus Z_1\oplus Z_3$, and $W_1'=Z_0'\oplus Z_1'\oplus Z_3'$, 
\item $Z_4$ and $Z_4'$ are complementary subspaces such that \eqref{decV1} holds true. 
\end{itemize}
Again, by \eqref{deltaeq} one has that $\dim Z_0=\dim Z_0'$ and $\dim Z_1=\dim Z_1'$, so that $\dim Z_k=\dim Z_k'$, for $k=2,3,4$.

We denote by $\pi_j\colon V_0\rightarrow X_j$ and $\pi_j'\colon V_0\rightarrow X_j'$, $j=0,1,2,3$ (resp.\ $\iota_k\colon Z_k\rightarrow V_1$, 
$\iota_k'\colon Z_k'\rightarrow V_1$, $k=0,1,2,3,4$) the canonical projections (resp.\ inclusions) and let
\[
\sigma_j\colon V_0/U_0\rightarrow X_j, \qquad\sigma_j'\colon V_0/U_0'\rightarrow X_j',\quad j=2,3
\]
and
\[
\theta_j\colon V_0/W_0\rightarrow X_j,\qquad \theta'_j\colon V_0/W_0'\rightarrow X_j', \quad j=1,3,
\]
denote the (not canonical) surjective linear maps defined in the obvious way: for instance, $\sigma_2\left(x_2+x_3+U_0\right)\coloneqq x_2$ for $x_2\in X_2$ and $x_3\in X_3$. Clearly, $\sigma_2+\sigma_3$ and other similar sums are isomorphisms. Consider also the (not canonical) isomorphisms $\xi\colon V_0/(U_0+W_0)\rightarrow X_3$ and $\xi'\colon V_0/(U_0'+W_0')\rightarrow X_3'$ defined by setting $\xi\bigl(x_3+\left(U_0+W_0\right)\bigr)\coloneqq x_3$, for $x_3 \in X_3$, and similarly for $\xi'$. Keeping in mind \eqref{bigdiagram}, we have the commutative diagrams:
\begin{equation}\label{varident}
\begin{split}
\quad&\\
\quad&\\
\sigma_3=\xi\pi_{U_0,W_0},\quad&\theta_3=\xi\pi_{W_0,U_0}\\
\sigma_3'=\xi'\pi_{U_0',W_0'},\quad&\theta_3'=\xi'\pi_{W_0',U_0'}\\
\end{split}
\qquad
\xymatrix{
V_0/U_0\ar[rr]^{\pi_{U_0,W_0}\quad}\ar[drr]_{\sigma_3}&&V_0/(U_0+W_0)\ar[d]^\xi\\
&&X_3
}
\end{equation}

Choose $g_j\in \GL(V_j)$, $j=0,1$, in such a way that 
\begin{equation}\label{defg1g0}
g_0(X_j)=X_j',\; j=0,1,2,3,\quad\text{ and }\quad g_1(Z_k)=Z_k',\; k=0,1,2,3,4.
\end{equation}
Then the four conditions in the first two rows in \eqref{deltaeq3} are satisfied:
\[
g_0(U_0)=g_0(X_0\oplus X_1)=X_0'\oplus X_1'=U_0',
\]
and similarly for the other three conditions. Actually, the choice of $g_0|_{X_j}$, $j=0,1,2$, and of $g_1|_{Z_k}$, $k=0,1,2,3,4$, is arbitrary, while $g_0|_{X_3}$ must satisfy a condition that will be determined at the end of the proof.

We have $g_0(\sigma_2+\sigma_3)=(\sigma'_2+\sigma_3')g_0$ and $g_0(\theta_1+\theta_3)=(\theta_1'+\theta_3')g_0$. For instance, if $v_0\in V_0$ and $v_0=x_0+x_1+x_2+x_3$ with $x_j\in X_j$ we have
\[
\begin{split}
\quad&\\
\quad&\\
\sigma'_2g_0(v_0+U_0)& = \sigma'_2(g_0x_2+g_0x_3+U_0')\\
& = g_0x_2\\
& = g_0\sigma_2(v_0+U_0).
\end{split}
\qquad\qquad
\xymatrix{
V_0/U_0\ar[d]_{g_0}\ar[r]^{\sigma_2}&X_2\ar[d]_{g_0}\\
V_0/U_0'\ar[r]_{\sigma'_2}&X_2'
}
\]

By multiplying the third row of \eqref{deltaeq3} by $\sigma_2'+\sigma_3'$  (resp.\ the fourth row by $\theta_1'+\theta_3'$) on the left,
we get the equivalent conditions
\begin{equation}\label{deltaeq4}
\begin{cases}
(\sigma_2'+\sigma_3')\pi_{U_0'}T\iota_{U_1'}+g_0(\sigma_2+\sigma_3)\Psi g_1^{-1}=(\sigma_2'+\sigma_3')\Psi'\\
(\theta_1'+\theta_3')\pi_{W_0'}T\iota_{W_1'}+g_0(\theta_1+\theta_3)\Phi g_1^{-1}=(\theta_1'+\theta_3')\Phi'.\\
\end{cases}
\end{equation}
Taking into account 
\begin{itemize}
\item
$(\sigma_2+\sigma_3)\Psi =(\sigma_2+\sigma_3)\Psi \iota_{U_1}=(\sigma_2+\sigma_3)\Psi(\iota_0+\iota_1+\iota_2)$
and other similar relations;
\item for $g_1$ defined by means of \eqref{defg1g0}, we may view $g_1^{-1}$ as restricted to $g_1U_1=U_1'$ (resp.\ to $g_1W_1=W_1'$) 
and we may write  $g_1^{-1}\iota_k'=\iota_kg_1^{-1}\iota_k'$;
\item $\sigma_2\pi_{U_0}=\pi_2$ because $\sigma_2\pi_{U_0}(x_1+x_2+x_3+x_4)=\sigma_2(x_2+x_3+U_0)=x_2=\pi_2(x_1+x_2+x_3+x_4)$
and other similar relations,
\end{itemize}

\noindent
we deduce that \eqref{deltaeq4} is in turn equivalent to
\begin{equation}\label{deltaeq5}
\begin{cases}
\pi_j'T\iota_k'+g_0\sigma_j\Psi \iota_k g_1^{-1}\iota_k'=\sigma_j'\Psi'\iota_k',\quad j=2,3,\;k=0,1,2,\\
\pi_j'T\iota_k'+g_0\theta_j\Phi \iota_k g_1^{-1}\iota_k'=\theta_j'\Phi'\iota_k',\quad j=1,3,\;k=0,1,3.\\
\end{cases}
\end{equation}
Decomposing $T$ as a sum of linearly independent operators:
\[
T=\sum_{j=0}^3\sum_{k=0}^4\;\pi'_jT\iota_k'
\]
we have that the operators $\pi_0'T\iota_k'\in\Hom\left(Z_k',X_0'\right)$, $k=0,1,2,3,4$ (resp.\ $\pi_j'T\iota_4'\in\Hom\left(Z_4',X_j'\right)$, $j=0,1,2,3$, resp.\  $\pi_2'T\iota_3'\in\Hom\left(Z_3',X_2'\right)$, resp.\ $\pi_1'T\iota_2'\in\Hom\left(Z_3',X_1'\right)$) 
may be chosen arbitrarily because they do not appear in \eqref{deltaeq5}.
\par
We then set
\[
\begin{split}
&\pi'_2T\iota'_k\coloneqq\sigma_2'\Psi'\iota_k'-g_0\sigma_2\Psi\iota_kg_1^{-1}\iota_k',\quad k=0,1,2,\\
&\pi'_1T\iota'_k\coloneqq\theta_1'\Phi'\iota_k'-g_0\theta_1\Phi\iota_kg_1^{-1}\iota_k',\quad k=0,1,3,\\
&\pi'_3T\iota'_2\coloneqq\sigma_3'\Psi'\iota_2'-g_0\sigma_3\Psi\iota_2g_1^{-1}\iota_2',\\
&\pi'_3T\iota'_3\coloneqq\theta_3'\Phi'\iota_3'-g_0\theta_3\Phi\iota_3g_1^{-1}\iota_3',\\
\end{split}
\]
because these conditions on the operators in the left hand sides are contained in only one row of \eqref{deltaeq5}. On the other hands, the operators $\pi'_3T\iota_0'$ and $\pi'_3T\iota_1'$ appear in both rows of \eqref{deltaeq5} and therefore they require a more careful analysis.

First of all, from \eqref{defPsicapPhi}, \eqref{varident}, and the definition of $Z_0$ and $Z_0'$, we get 
\[
\sigma_3\Psi\iota_0=\xi\pi_{U_0,W_0}\Psi\iota_0=\xi\pi_{W_0,U_0}\Phi\iota_0=\theta_3\Phi\iota_0
\]
and similarly $\sigma_3'\Psi'\iota_0'=\theta_3'\Phi'\iota_0'$ so that we may set
\[
\pi'_3T\iota_0'\coloneqq\sigma_3'\Psi'\iota_0'-g_0\sigma_3\Psi\iota_0g_1^{-1}\iota_0'\equiv\theta_3'\Phi'\iota_0'-g_0\theta_3\Phi\iota_0g_1^{-1}\iota_0'.
\]

Finally, from \eqref{defPsicapPhi}, the definition of $Z_1,Z_1'$, and \eqref{varident} we deduce that
\[
\sigma_3\Psi\iota_1(z_1)\neq\theta_3\Phi\iota_1(z_1)\;\text{ for all }z_1\in Z_1 \ \mbox{ and } \ \sigma_3'\Psi'\iota_1'(z_1')\neq\theta_3'\Phi'\iota_1'(z_1')\;\text{ for all }z_1'\in Z_1',
\]
so that the maps
\begin{equation}
\label{e:injective}
\sigma_3\Psi\iota_1-\theta_3\Phi\iota_1\colon Z_1\longrightarrow X_3,\qquad\quad \sigma_3'\Psi'\iota_1'-\theta_3'\Phi'\iota_1'\colon Z_1'\longrightarrow X_3',
\end{equation}
are injective. Given $g_1|_{Z_1}$, we are now in a position to determine the condition for $g_0|_{X_3}$ we alluded to above.
Just define $g_0|_{X_3}$ in such a way that the following diagram is commutative:

\qquad\qquad\qquad\qquad\qquad\qquad
\xymatrix{
Z_1\ar[d]^{g_1|_{Z_1}}\ar[rrr]^{\sigma_3\Psi\iota_1-\theta_3\Phi\iota_1}&&&X_3\ar[d]^{g_0|_{X_3}}\\
Z_1'\ar[rrr]_{\sigma_3'\Psi'\iota_1'-\theta_3'\Phi'\iota_1'}&&&X_3'
}
\par
\noindent
(this is possible since \eqref{e:injective} are injective).
It follows that $\sigma_3'\Psi'\iota_1'-g_0\sigma_3\Psi \iota_1 g_1^{-1}\iota_1'=\theta_3'\Phi'\iota_1'-g_0\theta_3\Phi \iota_1 g_1^{-1}\iota_1'$
and therefore we may set
\[
\pi_3'T\iota_1'\coloneqq\sigma_3'\Psi'\iota_1'-g_0\sigma_3\Psi \iota_1 g_1^{-1}\iota_1'.
\]
This completes the construction of the element $(g_0,g_1,T) \in H \ltimes A$ mapping $(U_0,U_1,\Psi)$ (resp.\ $(W_0,W_1,\Phi)$)
to $(U_0',U_1',\Psi')$ (resp.\ $(W_0',W_1',\Phi')$), and this ends the proof.
\end{proof}

\begin{corollary}\label{Cororbits}
Fix $y_0=(W_0,W_1)\in Y_{r,s}$. Then the orbits of $K\ltimes C$ (the stabilizer of $(y_0,0_{B_{y_0}})$) on $X$ are 
\[
\left\{(U_0,U_1,\Psi)\in X\colon \delta\bigl((U_0,U_1,\Psi), (W_0,W_1,0_{B_{y_0}})\bigr)=(h,i,j)\right\},
\]
for 
\begin{equation}\label{hijvar}
2r-m\leq h\leq r,\ \ \  2s-n+m\leq i\leq s, \mbox{ \ and \ } 0\leq j\leq i.
\end{equation}
\hfill \qed
\end{corollary}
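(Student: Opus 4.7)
The plan is to deduce Corollary \ref{Cororbits} from the preceding theorem via the standard bijection between $K$-orbits on a $G$-space and diagonal $G$-orbits on its square. First, I would observe that, by Proposition \ref{PropX}, $K \ltimes C$ is precisely the stabilizer of the basepoint $(y_0, 0_{B_{y_0}}) = (W_0, W_1, 0_{B_{y_0}}) \in X$, so the map sending $(U_0, U_1, \Psi)$ to $\bigl((W_0,W_1,0_{B_{y_0}}), (U_0, U_1, \Psi)\bigr)$ induces a bijection between the $(K \ltimes C)$-orbits on $X$ and the diagonal $(H \ltimes A)$-orbits on $X \times X$ whose first component equals $(W_0, W_1, 0_{B_{y_0}})$.

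Next, I would invoke the theorem to conclude that these latter orbits are completely classified by the value of $\delta\bigl((U_0,U_1,\Psi), (W_0,W_1, 0_{B_{y_0}})\bigr)$. Unpacking \eqref{defPsicapPhi} in the case $\Phi = 0_{B_{y_0}}$, and using that $\pi_{W_0,U_0} \circ 0 = 0$, one finds
\[
\Psi \cap 0_{B_{y_0}} = \{v \in U_1 \cap W_1 : \pi_{U_0, W_0}[\Psi(v)] = 0\} = \ker\bigl(\pi_{U_0,W_0}\circ\Psi|_{U_1\cap W_1}\bigr),
\]
which is a linear subspace of $U_1 \cap W_1$. Hence the third coordinate $j$ of $\delta$ is the dimension of a subspace of the $i$-dimensional space $U_1 \cap W_1$.

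Finally, I would check the range of $(h,i,j)$ via elementary dimension counting. Since $U_0, W_0$ are $r$-dimensional subspaces of the $m$-dimensional space $V_0$, the Grassmann identity yields $2r - m \leq h \leq r$; similarly, for the $s$-dimensional subspaces $U_1, W_1$ of the $(n-m)$-dimensional space $V_1$, one obtains $2s - n + m \leq i \leq s$; and the bound $0 \leq j \leq i$ is immediate from the description of $\Psi \cap 0_{B_{y_0}}$ above.

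The only delicate step is the identification of the first-component stabilizer via Proposition \ref{PropX} combined with the correct specialization of \eqref{defPsicapPhi} to $\Phi = 0_{B_{y_0}}$; the rest is a direct consequence of the classification theorem and standard linear algebra. (Note that not every triple in the range need correspond to a non-empty orbit, e.g.\ $j$ also satisfies the implicit lower bound $j \geq i - (m - 2r + h)$ coming from $\dim V_0/(U_0 + W_0) = m - 2r + h$, but the stated range does cover all orbits.)
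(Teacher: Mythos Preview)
Your proof is correct and follows exactly the approach implicit in the paper's \qed: the $(K\ltimes C)$-orbits on $X$ correspond bijectively to diagonal $(H\ltimes A)$-orbits on $X\times X$ via the standard stabilizer--orbit correspondence, and the preceding theorem classifies the latter by the invariant $\delta$. Your parenthetical observation that the stated range \eqref{hijvar} is not sharp (since necessarily $j\geq i-(m-2r+h)$ by rank--nullity) is a useful caveat that the paper leaves implicit.
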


A complete set of representatives for the $(K \ltimes C$)-orbits on $X$ is then given by (cf.\ \eqref{defGamma0})
\begin{equation}\label{rephijvar}
\Gamma_0\coloneqq\left\{\left(U_{0h},U_{1i},\Psi_{hij}\right): h,i,j \text{ as in }\eqref{hijvar}\right\},
\end{equation}
where $(U_{0h},U_{1i})$ are as in \eqref{hilvar2} and $\Psi_{hij}\in\Hom(U_{1i},V_0/U_{0h})$ is chosen in such a way that $\dim\left(\Psi_{hij}\cap 0_{B_{y_0}}\right)=j$. Note that, by virtue of \eqref{defPsicapPhi}, the latter condition is equivalent to
\begin{equation}\label{starp40}
\dim\Ker\left(\pi_{U_0,W_0}\Psi_{hij}|_{W_1\cap U_{1i}}\right)=j.
\end{equation}

\begin{corollary}\label{lastCor}
If $V$ is a vector space on a finite field $\mathbb{F}_q$ then $\left(\left(H\ltimes A\right),\left(K\ltimes C\right)\right)$ is a finite, symmetric Gelfand pair. In particular, its spherical functions are real valued.
\end{corollary}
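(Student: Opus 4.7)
The plan is to invoke the characterization \eqref{symmGel2} of the symmetric Gelfand pair property combined with the classification of $(H\ltimes A)$-diagonal orbits on $X\times X$ given by the theorem immediately preceding this corollary. Over a finite field $\mathbb{F}_q$, the groups $H = \GL(V_0)\times\GL(V_1)$ and $A = \Hom(V_1,V_0)$ are finite, so $H\ltimes A$ is a finite group and the standard finite Gelfand pair framework (as in \cite[Section 4.3]{book}) applies.

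To show the pair is symmetric, by \cite[Lemma 4.3.4]{book} it suffices to verify that for every two points $(U_0,U_1,\Psi)$ and $(W_0',W_1',\Phi)$ in $X$, the swapped pair $\bigl((W_0',W_1',\Phi),(U_0,U_1,\Psi)\bigr)$ lies in the same diagonal $(H\ltimes A)$-orbit on $X\times X$ as $\bigl((U_0,U_1,\Psi),(W_0',W_1',\Phi)\bigr)$. By the theorem preceding this corollary, diagonal orbits are completely classified by the triple $\delta$ in \eqref{defdelta}, so it is enough to check the identity
\[
\delta\bigl((U_0,U_1,\Psi),(W_0',W_1',\Phi)\bigr)=\delta\bigl((W_0',W_1',\Phi),(U_0,U_1,\Psi)\bigr).
\]

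This identity is immediate from inspection. The first two components, $\dim(U_0\cap W_0')$ and $\dim(U_1\cap W_1')$, are obviously symmetric under swapping. For the third component, note that by \eqref{defPsicapPhi} the set $\Psi\cap\Phi$ consists of those $v\in U_1\cap W_1'$ with $\pi_{U_0,W_0'}[\Psi(v)]=\pi_{W_0',U_0}[\Phi(v)]$, which is manifestly invariant under interchanging the roles of $(U_0,\Psi)$ and $(W_0',\Phi)$; in other words, $\Psi\cap\Phi=\Phi\cap\Psi$ as subspaces, so that their dimensions coincide.

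It follows from Corollary \ref{Cormultfree} (or directly from the theorem) that the permutation representation of $H\ltimes A$ on $L(X)$ is multiplicity-free, so $(H\ltimes A,K\ltimes C)$ is a Gelfand pair, and by the above it is symmetric. The fact that the spherical functions of a symmetric finite Gelfand pair are real valued is then standard (see, e.g., \cite[Corollary 4.6.5]{book}): for a symmetric pair the bi-invariant algebra is preserved by complex conjugation, which, combined with the characterization of spherical functions as algebra homomorphisms on this commutative algebra, forces them to be real valued.
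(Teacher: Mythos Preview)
Your proof is correct and follows essentially the same route as the paper: you observe that the invariant $\delta$ of \eqref{defdelta} is symmetric under swapping the two arguments, and then invoke the criterion \eqref{symmGel2} (equivalently \cite[Lemma 4.3.4]{book}) to conclude that $(H\ltimes A,K\ltimes C)$ is a symmetric Gelfand pair. One small remark: your appeal to Corollary~\ref{Cormultfree} is both unnecessary and, at this point in the paper, premature (its hypotheses are only verified later, in Proposition~\ref{PropCorsat}); the symmetric criterion alone already gives that the pair is a Gelfand pair, so you can simply drop that sentence.
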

\begin{proof}
The function $\delta$ in \eqref{defdelta} is symmetric and therefore we may apply the criterion in \eqref{symmGel2} (cf.\ \cite[Lemma 4.3.4]{book}). The fact that the spherical functions of a finite, symmetric Gelfand pair are real valued is a characterization of symmetry due to Adriano Garsia 
(cf.\ \cite[Theorem 4.8.2]{book}).
\end{proof}

\section{Spherical functions for the $q$-analog of the nonbinary Johnson scheme}
\label{SectiondecpermqqJoh}

In Section \ref{qJohstructure} above we have reformulated and generalized Dunkl's analysis of the $q$-analog of the nonbinary Johnson scheme. 
In the present section we revisit this symmetric Gelfand pair in terms of the notions and results 
that we have developed in Sections \ref{Sectiondecperm}, \ref{SecIntBasis}, and \ref{Secsphfunc}. 
\par
We assume that $V=\mathbb{F}_q^n$ (the canonical $n$-dimensional vector space over the finite field $\mathbb{F}_q$, $q$ a prime power), and that we have a decomposition $V=V_0\oplus V_1$ with $V_0\simeq\mathbb{F}_q^m$ and $V_1\simeq\mathbb{F}_q^{n-m}$, $0\leq m\leq n-1$. 
Then, cf.\ \eqref{H et A}, $H=H_{m,n-m}\cong \GL(\mathbb{F}_q^m)\times\GL(\mathbb{F}_q^{n-m})$ and $A=A_{n-m,m}\simeq\Hom(\mathbb{F}_q^{n-m},\mathbb{F}_q^m)$. Fix a nontrivial additive character $\tau$ of $\mathbb{F}_q$ (cf.\ \cite[Section 7.1]{book4}); then the map 
$\Hom(\mathbb{F}_q^m,\mathbb{F}_q^{n-m})\ni S\mapsto \psi_S\in\widehat{A}$, where $\psi_S(T)\coloneqq\tau(\Tr[ST]), T\in A$, is an isomorphism of abelian groups (cf.\ \cite[Exercise 8.7.14]{book} and \cite[p.\ 18]{DUNKL}). The action \eqref{defhtransl} is then given by 
\begin{align*}
\left[^{(g_0,g_1)}\,\psi_S\right](T)& = \psi_S\left(g_0^{-1}Tg_1\right)&(\text{by }\eqref{(star)p28})\\
& = \tau\left(\Tr\left[Sg_0^{-1}Tg_1\right]\right)=\tau\left(\Tr\left[g_1Sg_0^{-1}T\right]\right)=\psi_{g_1Sg_0^{-1}}(T)&
\end{align*}
that is,
\begin{equation}\label{defhtranslpsi}
^{(g_0,g_1)}\,\psi_S=\psi_{g_1Sg_0^{-1}}
\end{equation}
for all $(g_0,g_1) \in H$. It follows that the orbits of $H$ on $\widehat{A}$ are parameterized by the orbits of $H$ on 
$\Hom(\mathbb{F}_q^m,\mathbb{F}_q^{n-m})$, and a complete system of representatives for these orbits is 
given by $\Pi\coloneqq\{\psi_{S_\ell}\colon \ell=0,1,\dotsc,\min\{m,n-m\}\}$, where $S_\ell$ is as in \eqref{XiHom}.
\par
Fix $0 \leq r \leq m$, $0 \leq s \leq n - m$. For $y=(U_0,U_1)\in Y_{r,s}$ (cf.\ \eqref{e:Y-Yrs}) 
we have $B_y\simeq \Hom(\mathbb{F}_q^s,\mathbb{F}_q^m/\mathbb{F}_q^r)$ 
(cf.\ \eqref{e:B-y}), and with each $\Omega\in D_y$ (cf.\ \eqref{e:D-y}) we associate the character 
$\chi_\Omega\in\widehat{B_y}$ defined by setting (cf.\ \eqref{defOmegaV0U0})
\begin{equation}\label{defchiOmega}
\chi_\Omega(\Psi)\coloneqq \tau\left(\Tr\left[\Omega_{U_0,U_1}\Psi\right]\right),\qquad \Psi\in B_y. 
\end{equation}
Then the map $D_y\ni\Omega\rightarrow\chi_\Omega\in\widehat{B_y}$ is an isomorphism of abelian groups. Moreover, \eqref{defhtransl2} becomes
\begin{equation}\label{actchiOmega}
\begin{split}
\left[\!^{(g_0,g_1)}\chi_\Omega\right](\Psi)& = \chi_\Omega\left(g_0^{-1}\Psi g_1\right)\qquad\qquad\qquad\qquad\qquad\qquad\qquad\quad (\text{by }\eqref{2starp28})\\
& = \tau\left(\Tr\left[\Omega_{U_0,U_1}g_0^{-1}\Psi g_1\right]\right)=\tau\Bigl(\Tr\left[\left(g_1|_{U_1}\right)\Omega_{U_0,U_1}\left(g_0^{-1}|_{V_0/g_0U_0}\right)\Psi\right]\Bigr)\\
& = \tau\left(\Tr\left[\left(g_1\Omega g_0^{-1}\right)_{g_0U_0,g_1U_1}\,\Psi\right]\right)=\chi_{g_1\Omega g_0^{-1}}(\Psi),\qquad(\text{by }\ref{defOmegaV0U0})
\end{split}
\end{equation}
where $g_1\Omega g_0^{-1}\in D_{(g_0U_0,g_1 U_1)}$, $\Psi\in B_{(g_1 U_1,g_0U_0)}$.

Fix $y_0=(W_0,W_1)\in Y$, set $B \coloneqq B_{y_0}$, and take $\Omega\in D\coloneqq D_{y_0}$, $\theta_\Omega\coloneqq\chi_\Omega$, 
$\pi \coloneqq \pi_{y_0}$. Then \eqref{definfl} becomes, for all $T\in A$, 
\[
\begin{split}
\overset{\triangle}{\theta_\Omega}(T)& = \theta_\Omega(\pi(T))\\
(\text{by }\eqref{defpiynbqJ})\quad& = \theta_\Omega\left(\pi_{W_0}T\iota_{W_1}\right)\\
(\text{by }\eqref{defchiOmega})\quad& = \tau\left(\Tr\left[\Omega_{W_0,W_1}\left(\pi_{W_0}T\iota_{W_1}\right)\right]\right)\\
& = \tau\left(\Tr\left[\left(\iota_{W_1}\Omega_{W_0,W_1}\pi_{W_0}\right)T\right]\right)=\psi_{\iota_{W_1}\Omega_{W_0,W_1}\pi_{W_0}}(T)\\
(\text{by }\eqref{OmegaU0U1})\quad& = \psi_\Omega(T),
\end{split}
\]
that is, $\overset{\triangle}{\theta_\Omega}=\psi_\Omega$. From \eqref{defhtranslpsi} it follows that $^{(g_0,g_1)}\!\left(\overset{\triangle}{\theta_\Omega}\right)=\overset{\triangle}{\theta_\Omega}$ if and only if 
$(g_0,g_1)\Omega=\Omega$, and therefore $H_{\theta_\Omega}\equiv H_\Omega$ (cf. \eqref{HKchi} and Lemma \ref{LemmaHS}). Moreover, the orbits of $K\coloneqq K_{(W_0,W_1)}$ on $\widehat{B}\equiv\{\theta_\Omega\colon\Omega\in D\}$ are parameterized by the rank of the operators in $D$, and a complete system of representatives is $\Theta=\{\theta_\ell\colon \ell=0,1,\dotsc,\min\{m-r,s\}\}$, where $\theta_\ell\coloneqq \theta_{\Omega_\ell}$; cf.\ \eqref{ThetaHom}.
As indicated in Remark \ref{defGammatheta}, $H_{\Omega_\ell}$ is {\em not} the stabilizer of $\theta_\ell\equiv \chi_{\Omega_\ell}$ (which is $K\cap H_{\Omega_\ell}$; see also \eqref{actchiOmega}).

We do not need an explicit description of $K_{\Omega_\ell}\coloneqq K\cap H_{\Omega_\ell}$ because we may use Corollary \ref{Cor2qJohn} to study the corresponding homogeneous space. We denote by $\mathcal{G}_n^q$ the lattice of all subspaces of $\mathbb{F}_q^n$ 
(identified with $\mathcal{H}_n^q$; cf.\ \eqref{e:H-n} and Proposition \ref{PropHn}) and by $\mathcal{G}_{n,m}^q$ the Grassmannian of all 
$m$-dimensional subspaces.

In the following we recover Corollary \ref{lastCor} by using the representation theory we developed in Section 4.

\begin{proposition}
\label{PropCorsat}
With the above notation, the following holds. 
\begin{enumerate}[{\rm (1)}]
\item\label{PropCorsat1} 
As homogeneous spaces,
\[
\begin{split}
H_{\Omega_\ell}/K_{\Omega_\ell} & \cong \GL(\mathbb{F}_q^{m-\ell})/(H_{r,m-\ell-r}\ltimes A_{m-\ell-r,r}) \times
\GL(\mathbb{F}_q^{n-m-\ell})/(H_{s-\ell,n-m-s}\ltimes A_{n-m-s,s-\ell})\\
& \equiv \mathcal{G}_{m-\ell,r}^q\times\mathcal{G}_{n-m-\ell,s-\ell}^q.
\end{split}
\]
\item The conditions in Corollary \ref{Cormultfree} are fulfilled. 
\end{enumerate}
\end{proposition}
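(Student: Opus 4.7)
The plan is to derive both parts as essentially immediate consequences of the linear-algebraic work done in Section~\ref{qJohstructure}, together with the well-known fact that the $q$-Johnson scheme is a Gelfand pair.

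For part (1), by definition $\Omega_\ell\in D_{(W_0,W_1)}$ (cf.\ \eqref{ThetaHom}), so $W_0\le\Ker\Omega_\ell$ and $\Im\Omega_\ell\le W_1$. This is exactly the hypothesis needed to invoke Corollary~\ref{Cor2qJohn} with $S=\Omega_\ell$ and $(W_0,W_1)=y_0$: note that $K_{\Omega_\ell}=K\cap H_{\Omega_\ell}=K_{(W_0,W_1)}\cap H_{\Omega_\ell}$. Since $\dim\Ker\Omega_\ell=m-\ell$ and $\dim(V_1/\Im\Omega_\ell)=n-m-\ell$, the corollary yields
\[
H_{\Omega_\ell}/K_{\Omega_\ell}\cong\mathcal{G}_{m-\ell,r}^q\times\mathcal{G}_{n-m-\ell,s-\ell}^q,
\]
and the presentation of each Grassmannian as $\GL/(H\ltimes A)$ (cf.\ the discussion following \eqref{squarp3tris}) gives the form stated in the proposition.

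For part~(2), we verify the two conditions of Corollary~\ref{Cormultfree} separately. Condition (i), namely $\Xi=\Theta$, follows from tracking ranks: each $\theta_\ell\in\Theta$ inflates to $\overset{\triangle}{\theta_\ell}=\psi_{\Omega_\ell}\in\widehat{A}$, and by \eqref{defhtranslpsi} the $H$-action on such characters corresponds to the $H$-action $S\mapsto g_1Sg_0^{-1}$ on $\Hom(V_0,V_1)$, which preserves rank. Since the $\Omega_\ell$ have pairwise distinct ranks by construction, the inflated characters $\psi_{\Omega_\ell}$ lie in pairwise distinct $H$-orbits; thus every $K$-orbit in $\Theta$ determines its own $H$-orbit in $\widehat{A}$, i.e.\ $\Xi=\Theta$.

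Condition (ii) follows from part~(1) together with Proposition~\ref{Prop2qJohn}, which shows that the action of $H_{\Omega_\ell}$ on $H_{\Omega_\ell}/K_{\Omega_\ell}$ factors, via the surjection $\mathcal{L}$, through the product $\GL(\Ker\Omega_\ell)\times\GL(V_1/\Im\Omega_\ell)$ acting coordinate-wise on the two Grassmannians. Hence, as representations of $H_{\Omega_\ell}$,
\[
L(H_{\Omega_\ell}/K_{\Omega_\ell})\simeq L(\mathcal{G}_{m-\ell,r}^q)\otimes L(\mathcal{G}_{n-m-\ell,s-\ell}^q).
\]
Each tensor factor is the permutation representation of a $q$-Johnson scheme, which is one of the classical Gelfand pairs of Delsarte--Dunkl--Stanton discussed in the introduction (cf.\ \cite{DUNKL, Stanton, book}), hence multiplicity-free. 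The outer tensor product of two multiplicity-free representations of (independently acting) groups is again multiplicity-free, so condition (ii) holds.

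There is no substantive obstacle: the geometric content is already packaged in Corollary~\ref{Cor2qJohn} and Proposition~\ref{Prop2qJohn}, the rank-invariance argument for (i) is a direct consequence of \eqref{defhtranslpsi}, and (ii) reduces to the multiplicity-freeness of the $q$-Johnson scheme. The only point worth double-checking when writing out the details is the bookkeeping to ensure that $K_{\Omega_\ell}$ (the stabilizer of the character $\theta_\ell$ in $K$) coincides with $K\cap H_{\Omega_\ell}$, which was already observed in the paragraph following \eqref{defhtransl2}.
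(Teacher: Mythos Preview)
Your proposal is correct and follows essentially the same approach as the paper: part~(1) is obtained directly from Corollary~\ref{Cor2qJohn}, condition~(i) of Corollary~\ref{Cormultfree} is verified via the rank-preservation argument from \eqref{defhtranslpsi}, and condition~(ii) follows because $H_{\Omega_\ell}/K_{\Omega_\ell}$ is a product of two $q$-Johnson schemes. Your write-up is somewhat more explicit than the paper's (in particular the factorization of the $H_{\Omega_\ell}$-action through $\mathcal{L}$ and the tensor-product argument for multiplicity-freeness), but the ideas are identical.
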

\begin{proof}
(1) This is just a particular case of Corollary \ref{Cor2qJohn} (see also the comment preceding it).
\par
(2)  By the characterizations of the of $H$- and $K$-orbit of $\theta_\Omega$ by means of the rank, it follows that each $H$-orbit contains exactly one $K$-orbit, so that $\Xi=\Theta$. This shows that condition (i) in Corollary \ref{Cormultfree} is fulfilled.
From \eqref{PropCorsat1} it follows that $(H_{\Omega_\ell}, K_{\Omega_\ell})$, being $H_{\Omega_\ell}/K_{\Omega_\ell}$ equivalent 
(as a homogeneous space) to the direct product of two $q$-Johnson schemes (cf.\ \eqref{decqHahn} below), is a Gelfand pair. 
This shows that condition (ii) in Corollary \ref{Cormultfree} is fulfilled as well.
\end{proof}

\begin{corollary}
\label{c:gelfand-pair}
Let $0 \leq m \leq n$. For all $0 \leq r \leq m$ and $0 \leq s \leq n-m$ set
\[
H \coloneqq \left(\GL(\mathbb{F}_q^m) \times \GL(\mathbb{F}_q^{n-m})\right) \mbox{ \ and \ } A \coloneqq \Hom(\mathbb{F}_q^{n-m},\mathbb{F}_q^m),
\]
\begin{multline*}
K \coloneqq \left(\left(\GL(\mathbb{F}_q^r) \times \GL(\mathbb{F}_q^{m-r})\right) \ltimes \Hom(\mathbb{F}_q^{m-r}, \mathbb{F}_q^r)\right) \\ \times
\left(\left(\GL(\mathbb{F}_q^s) \times \GL(\mathbb{F}_q^{n-m-s})\right) \ltimes \Hom(\mathbb{F}_q^{n-m-s}, \mathbb{F}_q^s)\right), 
\end{multline*}
and
\[
C \coloneqq \Hom(\mathbb{F}_q^s, \mathbb{F}_q^r) \oplus \Hom(\mathbb{F}_q^{n-m-s}, \mathbb{F}_q^m).
\]
Then $(H \ltimes A,K \ltimes C)$ is a symmetric Gelfand pair.
\end{corollary}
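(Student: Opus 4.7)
The plan is to recognize the concrete groups $H$, $A$, $K$, $C$ displayed in the statement as precisely the instantiation of the abstract setup from Section \ref{qJohstructure} with $V = \mathbb{F}_q^n$ decomposed as $V = V_0 \oplus V_1$ (where $\dim V_0 = m$, $\dim V_1 = n-m$) and based at a point $y_0 = (W_0, W_1) \in Y_{r,s}$ with $\dim W_0 = r$, $\dim W_1 = s$. Once this identification is in place, Corollary \ref{lastCor} can be invoked directly.

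First I would observe that the groups $H$ and $A$ in the statement are tautologically the groups in \eqref{H et A}. Next, fixing complementary subspaces $V_0 = W_0 \oplus Z_0$ and $V_1 = W_1 \oplus Z_1$ (so that $\dim Z_0 = m-r$ and $\dim Z_1 = n-m-s$), the explicit formula \eqref{defW1W0} describes the stabilizer
\[
K_{(W_0,W_1)} \cong \left(H_{r,m-r} \ltimes A_{m-r,r}\right) \times \left(H_{s,n-m-s} \ltimes A_{n-m-s,s}\right),
\]
which matches the group $K$ of the statement under the natural identifications $W_0 \simeq \mathbb{F}_q^r$, $Z_0 \simeq \mathbb{F}_q^{m-r}$, $W_1 \simeq \mathbb{F}_q^s$, $Z_1 \simeq \mathbb{F}_q^{n-m-s}$. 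Finally, equation \eqref{Cinthiscase} gives $\Ker(\pi_{y_0}) = \Hom(W_1, W_0) \oplus \Hom(Z_1, V_0)$, which under the same identifications coincides with $\Hom(\mathbb{F}_q^s, \mathbb{F}_q^r) \oplus \Hom(\mathbb{F}_q^{n-m-s}, \mathbb{F}_q^m) = C$.

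With these identifications, the statement becomes a direct instance of Corollary \ref{lastCor}, whose justification rests on the symmetry of the $(H \ltimes A)$-invariant $\delta$ introduced in \eqref{defdelta} combined with the diagonal-orbit criterion \eqref{symmGel2} for symmetric Gelfand pairs (and, as recalled there, on Garsia's characterization for the real-valuedness of spherical functions). I do not foresee any genuine obstacle: the only bookkeeping is to match dimensions and to use the trivial splitting $\Hom(Z_1, V_0) = \Hom(Z_1, W_0) \oplus \Hom(Z_1, Z_0)$ implicit in \eqref{Cinthiscase}. Hence the whole argument reduces to verifying that the given data realize the general construction, after which the symmetric Gelfand pair property follows at once from Corollary \ref{lastCor}.
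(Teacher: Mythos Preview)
Your proposal is correct but follows a different route from the paper's own proof. You reduce the statement to Corollary~\ref{lastCor}, which established the symmetric Gelfand pair property directly from the symmetry of the invariant $\delta$ in~\eqref{defdelta} via the diagonal-orbit criterion~\eqref{symmGel2}. The paper instead cites Proposition~\ref{PropCorsat} together with Corollary~\ref{Cormultfree}: that is, it verifies the two representation-theoretic conditions (i) $\Xi=\Theta$ and (ii) each $(H_{\theta},K_{\theta})$ is a Gelfand pair, thereby obtaining multiplicity-freeness of $L(X)$. Note that Corollary~\ref{Cormultfree} by itself only yields the Gelfand pair property, not symmetry; the ``symmetric'' part of the paper's statement is thus implicitly relying on Corollary~\ref{lastCor} anyway. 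Your route is more economical for the bare statement as written, while the paper's route through Proposition~\ref{PropCorsat} is what is actually needed to feed into the explicit decomposition~\eqref{DSPHA} and the spherical function computations that follow in Section~\ref{SectiondecpermqqJoh}. The identification of $H$, $A$, $K$, $C$ with the abstract data that you outline is exactly right and is the common starting point of both arguments.
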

\begin{proof}
This follows immediately from Proposition \ref{PropCorsat} and Corollary \ref{Cormultfree}.
\end{proof}

\begin{example}[The $q$-analog of the Johnson scheme]
\label{ex:q-analog-J} {\rm In the notation of Corollary \ref{c:gelfand-pair}, for $m = 0$, and therefore $r = 0$, we have 
\begin{itemize}
\item $H = \GL(\mathbb{F}_q^n)$,
\item $A = \{0\}$,
\item $K = \left(\GL(\mathbb{F}_q^s) \times \GL(\mathbb{F}_q^{n-s})\right) \ltimes \Hom(\mathbb{F}_q^{n-s}, \mathbb{F}_q^s)$,
\item $C = \{0\}$.
\end{itemize}
Thus, $H \ltimes A = H$, $K \ltimes C = K$, and therefore
\[
(H \ltimes A, K \ltimes C) = (H,K) = \left(\GL(\mathbb{F}_q^n), (\GL(\mathbb{F}_q^s) \times \GL(\mathbb{F}_q^{n-s})) \ltimes \Hom(\mathbb{F}_q^{n-s}, \mathbb{F}_q^s)\right)
\]
and we recover the $q$-analog of the Johnson scheme (cf.\ \cite[Theorem 8.6.5]{book}).
\par
Note that, in this case, $V_0 = \{0\}$ and $V_1 = V$, and therefore $U_0 = \{0\}$ and $\Hom(U_1,V_0/U_0) = \{0\}$ for all $U_1 \subset V$,
so that the corresponding homogeneous space is given by $X = {\mathcal G}^q_{n,s}$, the Grassmannian of all $s$-dimensional vector subspaces of the $n$-dimensional vector space $\mathbb{F}_q^n$.
}
\end{example}

\begin{example}[The $q$-analog of the Hamming scheme]
\label{ex:q-analog-H}{\rm 
In the notation of Corollary \ref{c:gelfand-pair}, for $m > 0$, $r=0$, and $s = n-m$ we have 
\begin{itemize}
\item $K = \GL(\mathbb{F}_q^{m}) \times \GL(\mathbb{F}_q^{n-m})$,
\item  $C = \{0\}$. 
\end{itemize}
Thus, $K \ltimes C = K$, and therefore
\[
(H \ltimes A, K \ltimes C) = \left((\GL(\mathbb{F}_q^m) \times \GL(\mathbb{F}_q^{n-m})) \ltimes \Hom(\mathbb{F}_q^{n-m}, \mathbb{F}_q^m), \GL(\mathbb{F}_q^{m}) \times \GL(\mathbb{F}_q^{n-m})\right)
\]
and we recover the $q$-analog of the Hamming scheme (cf.\ \cite[Theorem 8.7.9]{book}).
\par
Note that, in this case, $U_0 = \{0\}$, $V_0 = \mathbb{F}_q^m$, and $U_1 = V_1 = \mathbb{F}_q^{n-m}$,
therefore $\Hom(U_1,V_0/U_0) = \Hom(U_1,V_0) = \Hom(\mathbb{F}_q^{n-m}, \mathbb{F}_q^m)$
so that the corresponding homogeneous space is given by $X = \Hom(\mathbb{F}_q^{n-m},  \mathbb{F}_q^m)$.
Note that by virtue of Proposition \ref{PropGH},  $X$ can be identified with the space of all $(n-m)$-dimensional subspaces
of $\mathbb{F}_q^n$ having a trivial intersection with $\mathbb{F}_q^m$.
In the setting of Association Schemes, this is called the \emph{Bilinear Forms Scheme} (cf.\ \cite{bannai2}).
}
\end{example}

\begin{example}[The Attenuated Space]
\label{ex:attenuated}{\rm 
In the notation of Corollary \ref{c:gelfand-pair}, for $m > 0$ and $r=0$ we have $U_0 = \{0\}$, $V_0 = \mathbb{F}_q^m$, and 
$U_1$ is an $s$-dimensional vector subspace of $\mathbb{F}_q^{n-m}$,
therefore $\Hom(U_1,V_0/U_0) = \Hom(U_1,V_0) = \Hom(U_1, \mathbb{F}_q^m)$, so that 
the corresponding homogeneous space $X$ can be identified with the space of all $s$-dimensional subspaces
of $\mathbb{F}_q^n$ having a trivial intersection with $\mathbb{F}_q^m$.
In the terminology of Association Schemes, this is called the \emph{Attenuated Space} (cf.\ \cite{bannai2}. See also
\cite{WJL}). Note that if $s = n-m$, that is, $s$ is maximal, we recover the Bilinear Forms Scheme from Example \ref{ex:q-analog-H}.
The character table (i.e., the spherical functions) for the Attenuated Space was explicitly calculated in \cite{Kurihara}.
}
\end{example}

We now proceed with the harmonic analysis of the Gelfand pair in Corollary \ref{c:gelfand-pair} by decomposing the
corresponding permutation representation and giving an explicit expression for the associated spherical functions.
\par
We keep the notation from Section \ref{SecIntBasis} (cf., in particular, Remarks \ref{Remchitheta0} and \ref{Remchitheta}).
\par
Let $y = (U_0,U_1) \in Y$ and $0 \leq \ell \leq \min\{m-r,s\}$. 
Recall that $\widehat{B_{y}}_{,\theta_\ell}=\{\chi \in \widehat{B_{y}}: \, ^{h(\chi)}\chi = \theta_\ell\}$ (resp.\
$D_y^\ell = \{\Omega \in \Hom(V_0,V_1): \Ker \Omega \geq V_0, \Im \Omega \leq U_1, \rk(\Omega) = \ell\}$) and that
the map $\Omega \mapsto \chi_\Omega$ defined by \eqref{defchiOmega} yields a
bijection from $D_y = \bigsqcup_\ell D_y^\ell$ onto $\widehat{B_{y}} = \bigsqcup_\ell \widehat{B_{y}}_{,\theta_\ell}$. 
Since this bijection is $H$-equivariant (cf.\ \eqref{actchiOmega}), we have
$\widehat{B_{y}}_{,\theta_\ell}=\left\{\chi_\Omega\colon\Omega\in D_{y}^\ell\right\}$ for all $\ell$.
Moreover, condition \eqref{chiCinv} becomes
\begin{align*}
\text{for all }T\in C, \quad 1& = \chi_\Omega\left(\pi_{U_0}T\iota_{U_1}\right)&\text{by }\eqref{defpiynbqJ}\\
& = \tau\left(\Tr\left[\Omega_{U_0,U_1}\pi_{U_0}T\iota_{U_1}\right]\right)&\text{by }\eqref{defchiOmega}\\
& = \tau\left(\Tr\left[\Omega T\right]\right)&\text{by }\eqref{OmegaU0U1}
\end{align*}
and therefore, by Lemma \ref{LemmaOmegaT}, we have 
\[
\widehat{B_y}^0=\{\chi_\Omega\colon\Omega\in D_{(W_0+U_0,W_1\cap U_1)}\} \ \mbox{ and } \ 
\widehat{B_{y}}_{,\theta_\ell}^0=\{\chi_\Omega\colon\Omega\in D_{(W_0+U_0,W_1\cap U_1)}^\ell\}.
\]

From \eqref{actchiOmega} and \eqref{defGammaprime} it also follows that the complete set $\Gamma$ of representatives 
for the orbits of $K$ on $\bigsqcup_{y \in Y}\widehat{B_y}^0$ (cf.\ \eqref{e:Gamma-rep} and \eqref{defGamma}) is given by 
\[
\Gamma=\left\{(z_{h,i},\chi_{h,i}^\ell)\colon h,i \text{ as in }\eqref{hilvar2}\text{ and } \ell \text{ as in }\eqref{hilvar}\right\},
\]
where $\chi_{h,i}^\ell\coloneqq \chi_{\Omega_{h,i}^\ell}$.

Moreover, from Lemma \ref{lemmaHKorbits} it follows that, for $\ell=0,1,\dotsc,\min\{m-r,s\}$, $\Gamma_\theta$ in Remark \ref{defGammatheta} is given by:
\[
\Gamma_{\theta_\ell}=\bigl\{(z_{h,i},\chi_{h,i}^\ell)\colon \max\{2r-m+\ell,0\}\leq h\leq r,\, \max\{2s-n+m,\ell\}\leq i\leq s\bigr\}.
\]

It is well known that for $T\in \Hom(V_1,V_0)$ with $\rk(T)=x$ one has
\begin{equation}\label{qKraw}
\sum_{S\in \Hom_\ell(V_0,V_1)}\tau(\Tr[ST])=K_\ell(x;n-m,m;q),
\end{equation}
where $K_\ell(x;n-m,m;q)$ is a $q$-Krawtchouk polynomial. See \cite[Theorem 4.5]{DUNKL} and the reference therein. 
Actually, \eqref{qKraw} could also be deduced from the calculations in \cite[Section 8.7]{book}, in particular from Exercise 8.7.15 
which does not use the $q$-Krawtchouk notation.

\begin{proposition}\label{PropLambda}
In the present setting, the intermediate basis in \eqref{defPsizchi} has the following expression: the element of the basis corresponding to $\chi_{h,i}^\ell$ evaluated at the representatives in \eqref{rephijvar} is equal to
\begin{equation}\label{LambdaKraw}
\Lambda(h,i,\ell;\Psi_{h,i,j})=\frac{1}{\binom{m-2r+h}{\ell}_q\prod_{t=0}^{\ell-1}(q^i-q^t)}K_\ell(i-j;i,m-2r+h;q),
\end{equation}
$j=0,1,\dotsc,i$, and it is zero on $\Psi_{h',i',j}$ if $(h',i')\neq (h,i)$.
\end{proposition}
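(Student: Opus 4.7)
My plan is to compute $\Lambda(h,i,\ell; \Psi_{h,i,j})$ directly from the formula \eqref{calcPsizchi}, identify the relevant $K_{z_{h,i}}$-orbit of operators, reduce the trace via \eqref{TrOmPsi} to one involving quotient operators, and finally invoke \eqref{qKraw} to recognize the $q$-Krawtchouk polynomial.

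First, from \eqref{calcPsizchi} the vanishing outside $z' = z$ is immediate: $\Lambda(h,i,\ell; \Psi_{h',i',j}) = 0$ when $(h',i') \neq (h,i)$. So I can focus on the case $(h',i') = (h,i)$ and I need to evaluate
\[
\Lambda(h,i,\ell; \Psi_{h,i,j}) = \frac{1}{|K_{z_{h,i}}|}\sum_{k \in K_{z_{h,i}}} \chi_{h,i}^\ell(k^{-1}\Psi_{h,i,j}).
\]
By \eqref{actchiOmega}, $\chi_{h,i}^\ell(k^{-1}\Psi_{h,i,j}) = [\,^k\chi_{\Omega_{h,i}^\ell}](\Psi_{h,i,j}) = \chi_{k\Omega_{h,i}^\ell}(\Psi_{h,i,j})$, so by the orbit–stabilizer principle this sum, divided by $|K_{z_{h,i}}|$, equals $\frac{1}{|\mathcal{O}|}\sum_{\Omega' \in \mathcal{O}} \chi_{\Omega'}(\Psi_{h,i,j})$, where $\mathcal{O}$ is the $K_{z_{h,i}}$-orbit of $\Omega_{h,i}^\ell$.

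Next I need to identify the orbit $\mathcal{O}$. Using the decomposition $V_0 = X_0 \oplus X_1 \oplus X_2 \oplus X_3$ and $V_1 = Y_0 \oplus Y_1 \oplus Y_2 \oplus Y_3$ from the proof of Lemma \ref{LemmaOmegaT} adapted to $(W_0, W_1)$ and $(U_{0,h}, U_{1,i})$, formula \eqref{kOmegax} shows that the induced action of $K_{z_{h,i}}$ on the quotient operators sits within $\GL(X_3) \times \GL(Y_0)$, and in fact covers it (by picking elements that act trivially on $X_0, X_1, X_2$ and $Y_1, Y_2, Y_3$). Since $\GL(X_3) \times \GL(Y_0)$ acts transitively on rank-$\ell$ maps between $X_3 \cong \mathbb{F}_q^{m-2r+h}$ and $Y_0 \cong \mathbb{F}_q^i$ via the identification \eqref{OmegaU0U1}, we obtain $\mathcal{O} = D_{(W_0+U_{0,h}, W_1 \cap U_{1,i})}^\ell$; its cardinality is the standard count of rank-$\ell$ linear maps, namely $\binom{m-2r+h}{\ell}_q \prod_{t=0}^{\ell-1}(q^i - q^t)$.

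To finish, I apply Lemma \ref{LemmaOmegaT} (formula \eqref{TrOmPsi}) to rewrite
\[
\chi_{\Omega'}(\Psi_{h,i,j}) = \tau\!\left(\Tr[\Omega'_{U_{0,h},U_{1,i}}\Psi_{h,i,j}]\right) = \tau\!\left(\Tr\bigl[\Omega'_{W_0+U_{0,h},W_1\cap U_{1,i}} \cdot T\bigr]\right),
\]
where $T \coloneqq \pi_{U_{0,h},W_0}\Psi_{h,i,j}\vert_{W_1\cap U_{1,i}} \in \Hom(W_1 \cap U_{1,i}, V_0/(W_0+U_{0,h}))$. By \eqref{starp40} we have $\rk(T) = i - j$. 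Using the isomorphism $D_{(W_0+U_{0,h},W_1\cap U_{1,i})}^\ell \cong \Hom_\ell(V_0/(W_0+U_{0,h}), W_1 \cap U_{1,i})$ given by $\Omega' \mapsto \Omega'_{W_0+U_{0,h},W_1\cap U_{1,i}}$, the sum becomes
\[
\sum_{S \in \Hom_\ell(\mathbb{F}_q^{m-2r+h},\, \mathbb{F}_q^{i})} \tau(\Tr[S T]) \;=\; K_\ell(i-j;\, i,\, m-2r+h;\, q),
\]
by \eqref{qKraw} applied with source dimension $m-2r+h$ and target dimension $i$. Dividing by $|\mathcal{O}|$ yields exactly \eqref{LambdaKraw}.

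The main obstacle is really step two, verifying that the $K_{z_{h,i}}$-orbit of $\Omega_{h,i}^\ell$ is the full rank-$\ell$ stratum inside $D_{(W_0+U_{0,h}, W_1 \cap U_{1,i})}$: one must be careful that $K_{z_{h,i}} = K \cap \Stab_H(z_{h,i})$ still acts transitively on each rank class after passing to the quadruple decomposition of $V_0$ and $V_1$; everything else is then just bookkeeping plus the single key input \eqref{qKraw}.
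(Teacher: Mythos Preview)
Your proof is correct and follows essentially the same route as the paper: both start from \eqref{calcPsizchi}, use \eqref{actchiOmega} to convert the $K_{z_{h,i}}$-average into a sum over the $K_{z_{h,i}}$-orbit of $\Omega_{h,i}^\ell$, identify that orbit with the full rank-$\ell$ stratum $D^\ell_{(W_0+U_{0,h},W_1\cap U_{1,i})}$ via \eqref{kOmegax}, reduce the trace through \eqref{TrOmPsi}, and finish with \eqref{qKraw}. The only cosmetic difference is that the paper phrases the orbit count as ``each operator appears $\lvert K_{z_{h,i}}\rvert/\lvert\Hom_\ell(\mathbb{F}_q^{m-2r+h},\mathbb{F}_q^i)\rvert$ times,'' whereas you invoke orbit--stabilizer explicitly; your extra care in checking that suitable block-diagonal elements of $K_{z_{h,i}}$ realize all of $\GL(X_3)\times\GL(Y_0)$ is exactly what justifies this step.
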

\begin{proof}
We have (cf. \eqref{calcPsizchi}):
\begin{multline*}
\sum_{k\in K} \,^k\chi_{h,i}^\ell(\Psi_{h,i,j})=\sum_{k=(k_0,k_1)\in K}
\tau\left(\Tr\left[\left(k_1\Omega_{h,i}^\ell k_0^{-1}\right)_{k_0U_{0h},k_1U_{1i}}\,\Psi_{h,i,j}\right]\right)\quad(\text{by }\eqref{actchiOmega})\\
=\sum_{k=(k_0,k_1)\in K_{z_{h,i}}}
\tau\left(\Tr\left[\left(k_1\Omega_{h,i}^\ell k_0^{-1}\right)_{W_0+U_{0h},W_1\cap U_{1i}}\left(\pi_{U_{0h},W_0}\Psi_{h,i,j}|_{W_1\cap U_{1i}}\right)\right]\right)\quad(\text{by }\eqref{TrOmPsi}).\\
\end{multline*}
From \eqref{kOmegax} we deduce that, in the last sum, $\left(k_1\Omega_{h,i}^\ell k_0^{-1}\right)_{W_0+U_{0h},W_1\cap U_{1i}}$ runs over all operators in $\Hom(V_0/(W_0+U_{0h}),W_1\cap U_{1i})$ of rank $\ell$, so that the number of times that every operator appears in the sum is
\begin{equation}\label{KzKchi}
\left\lvert K_{z_{h,i}}\right\rvert/\left\lvert\Hom_\ell\left(\mathbb{F}_q^{m-2r+h},\mathbb{F}_q^i\right)\right\rvert.
\end{equation}
The denominator in \eqref{LambdaKraw} coincides with the denominator in \eqref{KzKchi} (cf. \cite[Exercise 8.7.12]{book} or \cite[Proposition 4.4]{DUNKL}), while the factor $\left\lvert K_{z_{h,i}}\right\rvert$ in \eqref{calcPsizchi} simplifies. Since
\[
\begin{split}
\rk\left(\pi_{U_{0h},W_0}\Psi_{h,i,j}|_{W_1\cap U_{1i}}\right)&=\dim\left(W_1\cap U_{1i}\right)-\dim\Ker\left(\pi_{U_{0h},W_0}\Psi_{h,i,j}|_{W_1\cap U_{1i}}\right)\\
(\text{by }\eqref{starp40})\quad&=i-j,
\end{split}
\]
applying \eqref{qKraw} we end the proof of \eqref{LambdaKraw}. If $(h',i')\neq (h,i)$ then $z_{h',i'}$ is not in the $K$-orbit of $z_{h,i}$ and therefore $\Lambda(h,i,\ell;\Psi_{h',i',j})=0$.
\end{proof}

We now recall some basic facts on the $q$-Johnson scheme. In our notation, it is the Gelfand pair $(\GL(\mathbb{F}_q^n),H_{m,n-m}\ltimes A_{n-m,m})$ (cf.\ Example \ref{ex:q-analog-J} with $m = s$). Note that these results are usually stated under the assumption $m\leq n-m$, but they may easily extended to the case $m>n-m$ by means of the isomorphism $U\mapsto U^\circ$ in \eqref{ggtVprime}; see also \cite[Exercise 8.6.7]{book}. The corresponding homogeneous space is $\mathcal{G}^q_{n,m}$; cf.\ \eqref{squarp3tris} and the subsequent discussion. We denote by $V_0$ the subspace stabilized by $H_{m,n-m}\ltimes A_{n-m,m}$. The decomposition of $L(\mathcal{G}^q_{n,m})$ into spherical harmonics is given by
\begin{equation}\label{decqHahn}
L(\mathcal{G}^q_{n,m})=\bigoplus_{k=0}^{\min\{m,n-m\}}\mathcal{V}_{n,m,k}
\end{equation}
where
\begin{equation}\label{decqHahnbis}
\mathcal{V}_{n,m,k}=
\begin{cases}
(d^*)^{m-k}\left(L(\mathcal{G}^q_{n,k})\cap\Ker d\right)\quad\text{ if }m\leq n-m,\\
d^{n-m-k}\left(L(\mathcal{G}^q_{n,n-k})\cap\Ker (d^*)\right)\quad\text{ if }m> n-m,
\end{cases}
\end{equation}
and 
\[
d\colon\bigoplus_{k=1}^{n}L\left(\mathcal{G}^q_{n,k}\right)\longrightarrow \bigoplus_{k=0}^{n-1}L\left(\mathcal{G}^q_{n,k}\right),\qquad\qquad
d^*\colon\bigoplus_{k=0}^{n-1}L\left(\mathcal{G}^q_{n,k}\right)\longrightarrow \bigoplus_{k=1}^{n}L\left(\mathcal{G}^q_{n,k}\right)
\]
are the \emph{lowering} and \emph{raising operators}, respectively, defined as follows. 
For $f_1\in L\left(\mathcal{G}^q_{n,k+1}\right)$ and $f_2\in L\left(\mathcal{G}^q_{n,k-1}\right)$ one defines $df_1\in L\left(\mathcal{G}^q_{n,k}\right)$ and $d^*f_2\in L\left(\mathcal{G}^q_{n,k}\right)$ by setting, for all $U\in\mathcal{G}^q_{n,k}$,
\[
(df_1)(U)=\sum_{\substack{X\in\mathcal{G}^q_{n,k+1}\colon X\geq U\\ }}f_1(X),\qquad (d^*f_2)(U)=\sum_{\substack{Y\in\mathcal{G}^q_{n,k-1}\colon Y\leq U\\ }}f_2(Y).
\]
The spherical function $\varphi_{n,m,k}\in\mathcal{V}_{n,m,k}$ is given by:
\begin{equation}\label{qHahnsph}
\varphi_{n,m,k}(X)=
(-1)^k\frac{q^{-mk}}{(q^{n-m})_k(q^m)_k}\cdot E_k(m,n-m,m,\dim(X\cap V_0);q^{-1}),\qquad X\in\mathcal{G}_{n,m},
\end{equation}
where $E_k$ is a multiple of a $q$-Hahn polynomial; we refer to \cite[Theorem 3.7]{DUNKL} for the expression of the $E_k$'s in terms of the classical $Q_k$ notation for the $q$-Hahn polynomials. In \cite[Theorem 8.6.5]{book} we gave a direct expression for these spherical functions avoiding the terminology of $q$-orthogonal polynomials in order to keep the book at a reasonable length. Note that the expression of the spherical function in the case $m>n-m$, obtained by means of the isomorphism $U\mapsto U^\circ$ in \eqref{ggtVprime}, should be 
\[
\varphi_{n,m,k}(X)=
(-1)^k\frac{q^{-(n-m)k}}{(q^m)_k(q^{n-m})_k}\cdot E_k(n-m,m,n-m,n-\dim(X+V_0);q^{-1})
\]
but this is equal to \eqref{qHahnsph} since
\[
E_k(n-m,m,n-m,n-\dim(X+V_0);q^{-1})=q^{-k(2m-n)}E_k(m,n-m,m,2m-\dim(X+V_0);q^{-1})
\]
(\cite[Proposition 2.3(ii)]{DUNKL}) and $2m-\dim(X+V_0)=\dim(X\cap V_0)$. 

From \eqref{decqHahn} it follows that, for each $\theta_\ell$, $\ell=0,1,\dotsc,\min\{s,m-r\}$,   
\begin{equation}\label{LGGdec}
L\left(\mathcal{G}_{m-\ell,r}^q\times\mathcal{G}_{n-m-\ell,s-\ell}^q\right)\sim\bigoplus_{u=0}^{\min\{r,m-\ell-r\}}\bigoplus_{v=0}^{\min\{s-\ell,n-m-s\}}\left(\mathcal{V}_{m-\ell,r,u}\otimes\mathcal{V}_{n-m-\ell,s-\ell,v}\right)
\end{equation}
is the decomposition, into irreducible representations, of the permutation representation associated with the action described in Proposition \ref{PropCorsat}.\eqref{PropCorsat1};
we denote by $\sigma_{u,v,\ell}$ the irreducible representation of $H_{\Omega_\ell}$ over $\mathcal{V}_{m-\ell,r,u}\otimes\mathcal{V}_{n-m-\ell,s-\ell,v}$.

\begin{theorem}[Spherical functions of the $q$-analog of the nonbinary Johnson scheme]
\label{theodecirred}
With the above notation, the following holds.
\begin{enumerate}[{\rm (1)}]
\item
The decomposition of $L(X)$ into irreducible $(H\ltimes A)$-representations is:
\begin{equation}\label{DSPHA}
L(X)\sim\bigoplus_{\ell=0}^{\min \{m-r,s\}}\bigoplus_{u=0}^{\min\{r,m-\ell-r\}}\bigoplus_{v=0}^{\min\{s-\ell,n-m-s\}}\Ind_{H_{\Omega_\ell}\ltimes A}^{H\ltimes A}\left(\widetilde{\overline{\theta_\ell}}\otimes\overset{\triangle}{\sigma_{u,v,\ell}}\right).
\end{equation}
\item\label{theodecirred2}
\[
\begin{split}
&\dim\Ind_{H_{\Omega_\ell}\ltimes A}^{H\ltimes A}\left(\widetilde{\overline{\theta_\ell}}\otimes\overset{\triangle}{\sigma_{u,v,\ell}}\right)
=\binom{m}{\ell}_q\prod_{j=0}^{\ell-1}(q^{n-m}-q^j)\cdot\\
&\qquad\cdot\left(\binom{m-\ell}{u}_q -\binom{m-\ell}{u\pm 1}_q\right)\left(\binom{n-m-\ell}{v}_q -\binom{n-m-\ell}{v\pm 1}_q\right),
\end{split}
\]
where we use $u-1$ if $u\leq (m-\ell)/2$, and $u+1$ otherwise, and similarly for $v\pm 1$.
\item 
The spherical function associated with the parameters $(u,v,\ell)$ in \eqref{DSPHA} evaluated at the representative 
$\left(U_{0h},U_{1i},\Psi_{hij}\right)$ in \eqref{rephijvar} is equal to:
\begin{equation}\label{EEK}
\begin{split}
\Phi_{u,v,\ell}(h,i,j)\coloneqq&\frac{(-1)^{u+v}q^{-ru-(s-\ell)v}}{(q^{m-\ell-r})_u(q^r)_u\cdot 
(q^{n-m-s})_v(q^{s-\ell})_v\cdot\binom{m-r}{\ell}_q\prod_{t=0}^{\ell-1}(q^s-q^t)}\cdot\\
&\cdot E_u(r,m-\ell-r,r,h;q^{-1})\cdot E_v(s-\ell,n-m-s,s-\ell,i-\ell;q^{-1})\cdot\\
&\cdot K_\ell(i-j;i,m-2r+h;q),
\end{split}
\end{equation}
for $\max\{2r-m+\ell,0\}\leq h\leq r,\, \max\{2s-n+m,\ell\}\leq i\leq s$, and $0\leq j\leq i$.
\end{enumerate}
\end{theorem}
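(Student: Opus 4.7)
The plan is to derive all three statements from the general machinery of Sections \ref{Sectiondecperm}, \ref{SecIntBasis}, and \ref{Secsphfunc}, specialised via the linear-algebraic dictionary of Section \ref{qJohstructure}. Assertion (1) will follow directly from Corollary~\ref{Cormultfree}, whose two hypotheses have already been verified in Proposition~\ref{PropCorsat}: the $\Theta$-parameter is the rank $\ell$ of $\Omega_\ell\in D_{y_0}$, every $H$-orbit of inflated characters contains a unique $K$-orbit, and $L(H_{\Omega_\ell}/K_{\Omega_\ell})$ decomposes multiplicity-freely as in \eqref{LGGdec} by applying \eqref{decqHahn} to each of the two $q$-Johnson factors in Proposition~\ref{PropCorsat}(1); feeding the irreducibles $\sigma_{u,v,\ell}$ into Corollary~\ref{Cormultfree} produces \eqref{DSPHA}. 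Assertion (2) is then a direct application of \eqref{dimind}: the index $[H:H_{\Omega_\ell}]$ is the size of the $H$-orbit of $\Omega_\ell$, namely $|\Hom_\ell(\mathbb{F}_q^m,\mathbb{F}_q^{n-m})|=\binom{m}{\ell}_q\prod_{t=0}^{\ell-1}(q^{n-m}-q^t)$, while $\dim\sigma_{u,v,\ell}=\dim\mathcal{V}_{m-\ell,r,u}\cdot\dim\mathcal{V}_{n-m-\ell,s-\ell,v}$, each factor being of the form $\binom{N}{k}_q-\binom{N}{k\pm 1}_q$ by the classical dimension formula for the spherical harmonic spaces of the $q$-Johnson scheme.

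For assertion (3), I would evaluate the spherical function formula \eqref{sphfunct} at the representative $(U_{0h},U_{1i},\Psi_{hij})\in\Gamma_0$. By the support statement in Theorem~\ref{TheoremPsi} only the summand with $z=z_{h,i}$ survives, and Lemma~\ref{lemmaHKorbits} guarantees that the corresponding character is uniquely $\chi_{h,i}^\ell$ (and also pins down the range of $(h,i,\ell)$ for which the value is nonzero). The resulting $\Lambda$-factor has already been computed in Proposition~\ref{PropLambda} as a $q$-Krawtchouk polynomial divided by $\binom{m-2r+h}{\ell}_q\prod_{t=0}^{\ell-1}(q^i-q^t)$.

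The crucial geometric step is to read off $h_0(\chi_{h,i}^\ell)$ modulo $K_{\Omega_\ell}$: writing $h(\chi_{h,i}^\ell)=k(\chi)h_0(\chi)$ with $k(\chi)\in K$ and using that $k(\chi)$ preserves both $W_0$ and $W_1$, the pair $(h_0(\chi)W_0,\,h_0(\chi)W_1/\Im\Omega_\ell)$ corresponds, under the identification of Proposition~\ref{PropCorsat}(1), to a point of $\mathcal{G}^q_{m-\ell,r}\times\mathcal{G}^q_{n-m-\ell,s-\ell}$ whose intersection dimensions with the basepoints are $h$ and $i-\ell$, respectively. The spherical function on this product Gelfand pair factorises as the product of the two $q$-Hahn spherical functions \eqref{qHahnsph}, which contributes precisely the $E_u(\dots,h;q^{-1})$ and $E_v(\dots,i-\ell;q^{-1})$ factors of \eqref{EEK} together with the sign $(-1)^{u+v}$ and the coefficient $q^{-ru-(s-\ell)v}/((q^{m-\ell-r})_u(q^r)_u(q^{n-m-s})_v(q^{s-\ell})_v)$.

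Finally, the constants combine by invoking the orbit-count identity $|K_{z_{h,i}}|/|K_{\chi_{h,i}^\ell}|=|\Hom_\ell(\mathbb{F}_q^{m-2r+h},\mathbb{F}_q^i)|$ that is implicit in the proof of Proposition~\ref{PropLambda}: its specialisation at $(h,i)=(r,s)$ gives $|K|/|K_{\theta_\ell}|=\binom{m-r}{\ell}_q\prod_{t=0}^{\ell-1}(q^s-q^t)$, which is the only denominator surviving in \eqref{EEK}; its use for general $(h,i)$ produces exactly the factor that cancels the denominator of $\Lambda(h,i,\ell;\Psi_{h,i,j})$. The main obstacle is the geometric identification of $h_0(\chi_{h,i}^\ell)$ in terms of subspace intersection dimensions with $W_0$ and $W_1/\Im\Omega_\ell$; once this is carried out, the rest is a careful but essentially routine assembly of Proposition~\ref{PropLambda}, the product formula for spherical functions on a tensor of Gelfand pairs, and bookkeeping with $q$-binomial coefficients.
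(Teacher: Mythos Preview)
Your proposal is correct and follows essentially the same approach as the paper's own proof: assertions (1) and (2) are handled identically, and for (3) both you and the paper specialise \eqref{sphfunct}, use Proposition~\ref{PropLambda} for the $\Lambda$-factor, identify $h_0(\chi)$ via Proposition~\ref{Prop2qJohn} so that the spherical function $\varphi_{\theta_\ell,j}$ factors as a product of $q$-Hahn sphericals with arguments governed by the intersection dimensions $h$ and $i-\ell$, and then observe the cancellation between $\lvert K_{z_{h,i}}\rvert/\lvert K_{\chi_{h,i}^\ell}\rvert$ and the denominator of \eqref{LambdaKraw}, leaving only $\lvert K\rvert/\lvert K_{\theta_\ell}\rvert=\binom{m-r}{\ell}_q\prod_{t=0}^{\ell-1}(q^s-q^t)$.
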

\begin{proof}
(1) This follows immediately from Corollary \ref{Cormultfree} and \eqref{LGGdec}.
\par
(2) First of all, taking into account that the orbits of $H$ on $\Hom(V_0,V_1)$ are parameterized by the rank (cf.\ \eqref{XiHom}) and then using \cite[Exercise 8.7.12]{book} or \cite[Proposition 4.4]{DUNKL}, we get:
\[
\left\lvert H/H_{\Omega_\ell}\right\rvert=\binom{m}{\ell}_{\!\!\! q} \prod_{j=0}^{\ell-1}(q^{n-m}-q^j).
\]
Then we may recall that (cf.\ \cite[Theorem 8.6.5]{book} and the isomorphism $U\mapsto U^\circ$ in \eqref{ggtVprime})
\[
\dim\sigma_{u,v,\ell}=\left(\binom{m-\ell}{u}_{\!\!\! q}-\binom{m-\ell}{u\pm 1}_{\!\!\! q}\right)\left(\binom{n-m-\ell}{v}_{\!\!\! q} -\binom{n-m-\ell}{v\pm 1}_{\!\!\! q}\right)
\]
and apply \eqref{dimind}. 
\par
(3) The bounds on $h$ and $i$ follow from \eqref{hilvar2} and Lemma \ref{lemmaHKorbits}, which are equivalent to the conditions in Lemma \ref{LemmaGammatheta}. In particular, as in Remark \ref{defGammatheta}, $h_0(\chi)$ is an element $(g_0,g_1)\in H_{\Omega_\ell}$ such that, setting $(g_0,g_1)z_{r,s}\eqqcolon (U_0,U_1)$ and $^{(g_0,g_1)}\,\theta_\ell\eqqcolon\chi_\Omega$, the pair $\bigl((U_0,U_1),\chi_\Omega\bigr)$ is in the $K$-orbit of $(z_{h,i},\chi_{h,i}^\ell)$: there exists $k=(k_0,k_1)\in K$ (the $k(\chi)$ in Remark \ref{defGammatheta}) such that $k\bigl((U_0,U_1),\chi_\Omega\bigr)=(z_{h,i},\chi_{h,i}^\ell)$. Clearly, $z_{r,s}=(W_0,W_1)$ is the point stabilized by $K$, $\dim(W_0\cap U_0)=h$, and $\dim(W_1\cap U_1)=i$. 
Moreover, in the notation of Proposition \ref{Prop2qJohn},
\begin{equation}\label{Thg0g1}
\mathcal{L}_{(g_0,g_1)}\left(W_0,W_1/\Im\Omega_\ell\right)=\left(U_0,U_1/\Im\Omega_\ell\right),
\end{equation}
as $U_1=g_1W_1$ and $(g_0,g_1)\in H_{\Omega_\ell}\leq K_{(\Ker\Omega_\ell,\Im\Omega_\ell)}$ (cf.\ Lemma \ref{LemmaHS}) imply that 
$\Im\Omega_\ell\leq U_1$. Then
\begin{equation}\label{dimWUOm}
\begin{split}
\dim\left[\left(W_1/\Im\Omega_\ell\right)\cap\left(U_1/\Im\Omega_\ell\right)\right]& = \dim\Bigl((k_0,k_1)\left[\left(W_1\cap U_1\right)/\Im\Omega_\ell\right]\Bigr)\\
& = \dim\left[\left(W_1\cap U_{1i}\right)/\Im\left(k_1\Omega_\ell k_0^{-1}\right)\right]\\
& = i-\ell.\\
\end{split}
\end{equation}
From \eqref{sphfunct}, Proposition \ref{PropLambda}, and \eqref{Thg0g1} we then get a preliminary expression for the spherical function evaluated at $\left(U_{0h},U_{1i},\Psi_{hij}\right)$:
\begin{equation}\label{SPHERFUNC}
\begin{split}
\frac{\left\lvert K_{z_{h,i}}/K_{\Omega_{h,i}^\ell}\right\rvert}{\left\lvert K/K_{\Omega_\ell}\right\rvert}\varphi_{m-\ell,r,u}(U_{0h})\varphi_{n-m-\ell,s-\ell,v}(U_{1i}/\Im\Omega_\ell)\Lambda(h,i,\ell;\Psi_{hij}).
\end{split}
\end{equation}
Then observe that $\left\lvert K_{z_{h,i}}/K_{\Omega_{h,i}^\ell}\right\rvert$ is just the denominator in \eqref{LambdaKraw} (cf.\ also \eqref{KzKchi}), so that they simplify each other, while $\left\lvert K/K_{\Omega_\ell}\right\rvert$ is just a particular case of these calculations: it is equal to the number of operators in $D_{(W_0,W_1)}$ of rank $\ell$, that is to 
$\left\lvert\Hom_\ell\left(\mathbb{F}_q^{m-r},\mathbb{F}_q^s\right)\right\rvert$, which in turn is equal to $\binom{m-r}{\ell}_q\prod_{t=0}^{\ell-1}(q^s-q^t)$. In conclusion, by means of elementary calculations, and taking \eqref{dimWUOm} into account, one easily shows that \eqref{SPHERFUNC} is equal to \eqref{EEK}.
\end{proof}

\begin{remark}
{\rm
The expression in \eqref{EEK} coincides with that in \cite[Theorem 5.6]{DUNKL}, up to the normalization constant. 
Indeed, the parameters $i,j,r,a,b,m,n,v,w,z$ in Dunkl's paper
correspond respectively to $r,s,\ell,m,n-m,u,v,h,i,i-j$ in the present paper. 
Note also that Dunkl studied $H_{ij}-H_{xu}$ invariant functions and the spherical functions correspond to the case $x=i$ and $u=j$ in his paper.
}
\end{remark}

\begin{theorem}[Orthogonality relations for the spherical functions of the $q$-analog of the nonbinary Johnson scheme]
The spherical functions satisfy the following orthogonality relations $($cf.\ \eqref{orthrel}$)$: for $0\leq \ell,\ell'\leq\min\{m-r,s\}$, 
$0\leq u,u'\leq\min\{r,m-\ell-r\}$, and $0\leq v,v,'\leq\min\{s-\ell,n-m-s\}$ we have
\[
\sum_{h=\max\{2r-m+\ell,0\}}^r\sum_{i=\max\{2s-n+m,\ell\}}^s\sum_{j=0}^i\Phi_{u,v,\ell}(h,i,j)\Phi_{u',v',\ell'}(h,i,j)\gamma(h,i,j)=\delta_{u,u'}\delta_{v,v'}\delta_{\ell,\ell'}\beta(u,v,\ell)
\]
and 
\[
\begin{split}
\gamma(h,i,j)\coloneqq
&{\binom{r}{h}_{\!\!\! q}\binom{m-r}{r-h}_{\!\!\! q} q^{(r-h)^2}\cdot\binom{s}{i}_{\!\!\! q}\binom{n-m-s}{s-i}_{\!\!\! q}}q^{(s-i)^2}\cdot q^{i(r-h)}\cdot q^{(s-i)(m-r)}\cdot\\
&\cdot \binom{i}{j}_{\!\!\! q}\;\prod_{t=0}^{j-1}(q^{m-2r+h}-q^t).
\end{split}
\]
is the cardinality of the $(K\ltimes C)$-orbit containing $(U_{0h},U_{1i},\Psi_{hij})$, and
\[
\beta(u,v,\ell)\coloneqq \frac{\binom{m}{r}_{\! q}\binom{n-m}{s}_{\! q} q^{s(m-r)}}{\binom{m}{\ell}_{\! q}\prod_{j=0}^{\ell-1}(q^{n-m}-q^j)\left(\binom{m-\ell}{u}_{\! q} -\binom{m-\ell}{u-1}_{\! q}\right)\left(\binom{n-m-\ell}{v}_{\! q} -\binom{n-m-\ell}{v-1}_{\! q}\right)}
\] 
is the square of the norm of $\Phi_{u,v,\ell}$.
\end{theorem}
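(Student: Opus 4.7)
The strategy is to derive the stated relations by specializing the general orthogonality formula \eqref{orthrel} to the situation of Theorem \ref{theodecirred}. Two facts need verification: that $\gamma(h,i,j)$ equals the $(K\ltimes C)$-orbit cardinality $|K|\cdot|C|/(|K_{z,d}|\cdot|gC\cap C|)$, and that $\beta(u,v,\ell)$ equals $\|\Phi_{u,v,\ell}\|_{L(X)}^2$. Reality of the spherical functions, guaranteed by Corollary \ref{lastCor}, disposes of the complex conjugate appearing in \eqref{orthrel}.

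I would handle $\beta(u,v,\ell)$ first, since it is an immediate substitution into \eqref{normPhi}, which reads
\[
\|\Phi_{u,v,\ell}\|_{L(X)}^2=\frac{|X|\cdot|H_{\Omega_\ell}|}{|H|\cdot\dim\sigma_{u,v,\ell}}.
\]
Here $|X|=|Y_{r,s}|\cdot|B|=\binom{m}{r}_{\! q}\binom{n-m}{s}_{\! q}\,q^{s(m-r)}$; the index $[H:H_{\Omega_\ell}]$ is the size of the $H$-orbit of $\Omega_\ell$ in $\Hom(V_0,V_1)$, which is the number of rank-$\ell$ operators and equals $\binom{m}{\ell}_{\! q}\prod_{j=0}^{\ell-1}(q^{n-m}-q^j)$; and $\dim\sigma_{u,v,\ell}$ is the product of two $q$-Hahn dimensions already written down in Theorem \ref{theodecirred}.\eqref{theodecirred2}. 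Assembling these factors reproduces $\beta(u,v,\ell)$ on the nose.

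For $\gamma(h,i,j)$, I would avoid computing $|K_{z,d}|$ and $|gC\cap C|$ separately, and instead compute the orbit cardinality directly, as a product of independent counts. Since $C$ acts trivially on $(U_0,U_1)$ and $K\equiv K_{(W_0,W_1)}$ acts on $Y_{r,s}\equiv\mathcal{G}^q_{m,r}\times\mathcal{G}^q_{n-m,s}$ as a direct product of its two semidirect factors acting separately on the two Grassmannians, the $K$-orbit of $(U_{0h},U_{1i})$ has cardinality
\[
\binom{r}{h}_{\! q}\binom{m-r}{r-h}_{\! q}q^{(r-h)^2}\cdot\binom{s}{i}_{\! q}\binom{n-m-s}{s-i}_{\! q}q^{(s-i)^2},
\]
by the standard orbit count for the $q$-analog of the Johnson scheme (applied independently to each factor). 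Next, with $(U_0,U_1)=(U_{0h},U_{1i})$ fixed, Corollary \ref{Cororbits} implies that the $\Psi$'s appearing in the orbit are exactly those with $\dim(\Psi\cap 0_{B_{y_0}})=j$, equivalently (by \eqref{starp40}) those for which the composite $\pi_{U_0,W_0}\Psi|_{W_1\cap U_1}$ has kernel of dimension $j$. Choosing a direct-sum complement $U_1=(W_1\cap U_1)\oplus E$ with $\dim E=s-i$, a $\Psi$ is uniquely determined by three independent pieces of data: (i) the composite map $\phi\colon W_1\cap U_1\to V_0/(U_0+W_0)$, which must have kernel of dimension $j$ (equivalently, rank $i-j$), and whose count comes from the standard enumeration of linear maps of prescribed rank between spaces of dimension $i$ and $m-2r+h$; (ii) a lift of $\phi$ along the surjection $V_0/U_0\twoheadrightarrow V_0/(U_0+W_0)$, whose kernel $(U_0+W_0)/U_0\cong W_0/(W_0\cap U_0)$ has dimension $r-h$, contributing a factor of $q^{i(r-h)}$; and (iii) an arbitrary map $\Psi|_E\in\Hom(E,V_0/U_0)$, contributing a factor of $q^{(s-i)(m-r)}$. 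Multiplying by the $Y_{r,s}$-orbit count above yields $\gamma(h,i,j)$.

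The main delicate point is step (i)--(iii) of the orbit decomposition, where one must verify that the three pieces of data are genuinely independent and that every $\Psi$ with the required intersection dimension is reconstructed exactly once; the splitting $U_1=(W_1\cap U_1)\oplus E$ and the exact sequence $0\to(U_0+W_0)/U_0\to V_0/U_0\to V_0/(U_0+W_0)\to 0$ make this a routine check. Once in place, plugging the resulting $\gamma$ and $\beta$ into \eqref{orthrel} yields the orthogonality relations exactly in the claimed form.
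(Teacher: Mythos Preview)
Your proposal is correct and follows essentially the same route as the paper: the paper also specializes \eqref{orthrel} and \eqref{normPhi}, computes $\beta$ from $|X|$ together with Theorem \ref{theodecirred}.\eqref{theodecirred2}, and obtains $\gamma$ by first counting the $K$-orbit of $(U_{0h},U_{1i})$ and then, for fixed $(U_0,U_1)$, decomposing $\Psi$ via $U_1=(W_1\cap U_1)\oplus Z_1$ and $V_0/U_0=(W_0+U_0)/U_0\oplus X_1/U_0$ into four blocks $\Psi_{00},\Psi_{01},\Psi_{10},\Psi_{11}$, three of which are free while $\Psi_{01}$ has prescribed rank. Your items (i)--(iii) are exactly this decomposition with $\Psi_{10}$ and $\Psi_{11}$ merged into the single piece $\Psi|_E$.
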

\begin{proof}
Suppose that $(U_0,U_1)$ belongs to the $K$-orbit of $(U_{0h},U_{1i})$ in $Y_{r,s}$. In order to compute the number of $\Psi\in\Hom(U_1,V_0/U_0)$ such that $(U_0,U_1,\Psi)$ is in the $(K\ltimes C)$-orbit of $(U_{0h},U_{1i},\Psi_{hij})$, we use the following decompositions:
\[
\frac{V_0}{U_0}=\frac{X_0}{U_0}\oplus \frac{X_1}{U_0} \ \ \mbox{ and } \ \  U_1=Z_0\oplus Z_1,
\]
where $X_0\coloneqq W_0+U_0$, $Z_0\coloneqq W_1\cap U_1$, and ${X_1}/{U_0}$ and $Z_1$ are complementary subspaces. Then $\Psi=\Psi_{00}+\Psi_{01}+\Psi_{10}+\Psi_{11}$, where $\Psi_{ij}\in\Hom\left(Z_i,X_j/U_0\right)$ and the condition $\dim\Ker\left(\pi_{U_0,W_0}\Psi|_{W_1\cap U_1}\right)=j$ (cf. \eqref{starp40}) is satisfied if and only if $\rk(\Psi_{01})=j$, while $\Psi_{00},\Psi_{10}$ and $\Psi_{11}$ are arbitrary. The number of $\Psi_{01}$'s may be computed by means of \cite[Exercise 8.7.12]{book} or \cite[Proposition 4.4]{DUNKL} and it is equal to
\[
\binom{i}{j}_{\!\!\! q}\;\prod_{t=0}^{j-1}(q^{m-2r+h}-q^t)
\] 
because $\dim(W_0+U_0)=2r-h$ implies that $\dim(X_1/U_0)=\dim(V_0/U_0)-\dim\left[(W_0+U_0)/U_0\right]=m-2r+h$. Moreover, note that the number of $(U_0,U_1)$'s is equal to $\lvert K/K_{z_{h,i}}\rvert$, the cardinality of the $K$-orbit on $Y_{r,s}$ containing 
$(U_{0h},U_{1i})$, and it may be computed by means of \cite[Corollary 8.5.5]{book} or \cite[Corollary 3.2]{DUNKL}:
\[
\lvert K/K_{z_{h,i}}\rvert={\binom{r}{h}_{\!\!\! q}\binom{m-r}{r-h}_{\!\!\! q} q^{(r-h)^2}\cdot\binom{s}{i}_{\!\!\! q}\binom{n-m-s}{s-i}_{\!\!\! q}}q^{(s-i)^2}.
\]
In conclusion, the cardinality of the $(K\ltimes C)$-orbit of $(U_{0h},U_{1i},\Psi_{hij})$ is equal to
\[
\lvert K/K_{z_{h,i}}\rvert\cdot \left\lvert\Hom\left(Z_0,X_0/U_0\right)\right\rvert\cdot\left\lvert\Hom\left(Z_1,X_0/U_0\right)\right\rvert\cdot\left\lvert\Hom\left(Z_1,X_1/U_0\right)\right\rvert
\cdot\left\lvert\Hom_j\left(Z_0,X_1/U_0\right)\right\rvert
\]
and one easily gets the expression of $\gamma(h,i,j)$ in the statement.

The square of the norm of $\Phi_{u,v,\ell}$ is equal to (cf.\ \eqref{normPhi} and \cite[Corollary 4.6.4.(iii)]{book})
\[
\left\lVert\Phi_{u,v,\ell} \right\rVert_{L(X)}^2=
\frac{\lvert X\rvert}{\dim\Ind_{H_{\Omega_\ell}\ltimes A}^{H\ltimes A}\left(\widetilde{\overline{\theta_\ell}}\otimes\overset{\triangle}{\sigma_{u,v,\ell}}\right)}.
\]
From \cite[Corollary 8.5.6]{book} we get:
\[
\lvert X\rvert=\lvert Y_{rs}\rvert\cdot \lvert B\rvert=\left\lvert \mathcal{G}^q_{m,r}\right\rvert\cdot \left\lvert \mathcal{G}^q_{n-m,s}\right\rvert\cdot\left\lvert \Hom(\mathbb{F}_q^s,\mathbb{F}_q^m/\mathbb{F}_q^r)\right\rvert=\binom{m}{r}_{\!\!\! q}\binom{n-m}{s}_{\!\!\! q}q^{s(m-r)}
\]
and one finally derives the expression of $\beta(u,v,\ell)$ by applying Theorem \ref{theodecirred}.\eqref{theodecirred2}. 
\end{proof}

In order to get an explicit expression for the spherical harmonics, we now apply \eqref{descsphharm} in the present setting. For $0\leq \ell \leq \min\{m-r,s\}$ and $S\in \Hom_\ell(V_0,V_1)$ we choose $t_S\in H$ such that $t_S\Omega_\ell=S$. This way, we get a system of representatives for the left cosets of $H_{\Omega_\ell}$ in $H$.  We then apply Corollary \ref{Cor2qJohn} and, for every $y=(Y_0,Y_1)\in Y$ with $\Omega_\ell\in D_y$, we choose $s_y\in H_{\Omega_\ell}$ such that: 
\begin{equation}\label{Lsy}
\mathcal{L}_{s_y}\left(W_0,W_1/\Im\Omega_\ell\right)=\left(Y_0,Y_1/\Im\Omega_\ell\right).
\end{equation}
We thus obtain a complete system of representatives for the left cosets of $K_{\Omega_\ell}$ in $H_{\Omega_\ell}$. Then, for all $0\leq u\leq\min\{r,m-\ell-r\}$ and $0\leq v\leq\min\{s-\ell,n-m-s\}$, we denote by $\mathcal{V}_{m-\ell,r,u}(\Ker S)\otimes\mathcal{V}_{n-m-\ell,s-\ell,v}(V_1/\Im S)$ the subspaces in \eqref{LGGdec} constructed as in \eqref{decqHahnbis} but using $\Ker S$ and $V_1/\Im S$ in place of $\Ker \Omega_\ell$ and $V_1/\Im\Omega_\ell$.

\begin{theorem}\label{PropSphharm}
For every $S\in \Hom_\ell(V_0,V_1)$ choose 
\begin{equation}\label{choicefS}
f_{0S}\otimes f_{1S}\in\mathcal{V}_{m-\ell,r,u}(\Ker S)\otimes\mathcal{V}_{n-m-\ell,s-\ell,v}(V_1/\Im S)
\end{equation}
and define $F\in L(X)$ by setting
\[
F(U_0,U_1,\Psi)=\sum_{\Omega\in D_{(U_0,U_1),\ell}}\chi_\Omega(\Psi)f_{0\Omega}\left(U_0\right) f_{1\Omega}\left(U_1/\Im\Omega\right),
\]
for all $(U_0,U_1,\Psi)\in X$.
Then the subspace made up of all these $F$'s corresponds to the image of the space of $\Ind_{H_{\Omega_l}\ltimes A}^{H\ltimes A}\left(\widetilde{\overline{\theta_\ell}}\otimes\overset{\triangle}{\sigma_{u,v,\ell}}\right)$ $($cf.\ \eqref{DSPHA}$)$ under the intertwining operator in Theorem \ref{Teospherfunct}.\eqref{Teospherfunct3}.
\end{theorem}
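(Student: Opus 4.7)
The plan is to unpack formula \eqref{descsphharm} of Theorem \ref{Teospherfunct}.\eqref{Teospherfunct3} with $\theta=\theta_\ell$ and $\sigma_{\theta,j}=\sigma_{u,v,\ell}$, using the bijection $\Omega\leftrightarrow\chi_\Omega$ of \eqref{defchiOmega}. By \eqref{actchiOmega}, the condition $\,^{h(\chi_\Omega)}\theta_\ell=\chi_\Omega$ reads $h(\chi_\Omega)\Omega_\ell=\Omega$. Since $H_{\Omega_\ell}$ stabilizes $\Omega_\ell$, the $H_{\Omega_\ell}$-coset of $h(\chi_\Omega)$ is determined by $\Omega$ alone: with our choice $\mathcal T_{\theta_\ell}=\{t_S\colon S\in\Hom_\ell(V_0,V_1)\}$ satisfying $t_S\Omega_\ell=S$, this forces $t(\chi_\Omega)=t_\Omega$. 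Consequently, the double sum in \eqref{descsphharm} collapses to a single sum over $\Omega\in D_{(U_0,U_1)}^\ell$.

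Next I would identify $s(\chi_\Omega)$ modulo $K_{\Omega_\ell}$. From $h(\chi_\Omega)=t_\Omega\,s(\chi_\Omega)\,k(\chi_\Omega)$ with $k(\chi_\Omega)\in K_{\Omega_\ell}\subseteq K$ (so $k(\chi_\Omega)y_0=y_0$), we obtain $s(\chi_\Omega)y_0=t_\Omega^{-1}(U_0,U_1)$. Since $s(\chi_\Omega)\in H_{\Omega_\ell}\leq K_{(\Ker\Omega_\ell,\Im\Omega_\ell)}$ by Lemma \ref{LemmaHS}, while $W_0\leq\Ker\Omega_\ell$ and $\Im\Omega_\ell\leq W_1$ (cf.\ Lemma \ref{lemmaHKorbits}), writing $s(\chi_\Omega)$ in its little-group form \eqref{defW1W0} and applying \eqref{squarp3tris} yields
\[
\mathcal L_{s(\chi_\Omega)}\bigl(W_0,W_1/\Im\Omega_\ell\bigr)=\bigl(s(\chi_\Omega)W_0,\,(s(\chi_\Omega)W_1)/\Im\Omega_\ell\bigr)=\bigl(t_\Omega^{-1}U_0,\,(t_\Omega^{-1}U_1)/\Im\Omega_\ell\bigr).
\]
In view of \eqref{Lsy}, evaluating $T_{\theta_\ell,(u,v,\ell)}u_{t_\Omega}\in \mathcal V_{m-\ell,r,u}\otimes \mathcal V_{n-m-\ell,s-\ell,v}$ at $s(\chi_\Omega)$ amounts to evaluating it at this point of $\mathcal{G}^q_{m-\ell,r}(\Ker\Omega_\ell)\times\mathcal{G}^q_{n-m-\ell,s-\ell}(V_1/\Im\Omega_\ell)$.

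The final step is the transport by $t_\Omega$. Since $t_\Omega\Omega_\ell=\Omega$, the map $t_\Omega$ sends $\Ker\Omega_\ell$ isomorphically to $\Ker\Omega$ and induces an isomorphism $V_1/\Im\Omega_\ell\cong V_1/\Im\Omega$. Because the subspaces $\mathcal V_{\cdot,\cdot,k}$ are built canonically from the $\GL$-equivariant raising and lowering operators (cf.\ \eqref{decqHahnbis}), pull-back by $t_\Omega^{-1}$ identifies $\mathcal V_{m-\ell,r,u}(\Ker\Omega_\ell)\otimes\mathcal V_{n-m-\ell,s-\ell,v}(V_1/\Im\Omega_\ell)$ with $\mathcal V_{m-\ell,r,u}(\Ker\Omega)\otimes\mathcal V_{n-m-\ell,s-\ell,v}(V_1/\Im\Omega)$. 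Writing this pull-back as $f_{0\Omega}\otimes f_{1\Omega}$ (or, by linearity, as a sum of pure tensors) gives $[T_{\theta_\ell,(u,v,\ell)}u_{t_\Omega}](s(\chi_\Omega))=f_{0\Omega}(U_0)\,f_{1\Omega}(U_1/\Im\Omega)$. Substituting into the collapsed formula and absorbing $1/\sqrt{|B|}$ into the free scaling of $f_{0\Omega}\otimes f_{1\Omega}$ yields the expression of $F$ claimed in the statement. Conversely, since $u_{t_S}\in U_{u,v,\ell}$ may be chosen freely for each $S$ (cf.\ \cite[(11.8)]{book4}), and $T_{\theta_\ell,(u,v,\ell)}$ maps $U_{u,v,\ell}$ onto $\mathcal V_{m-\ell,r,u}\otimes\mathcal V_{n-m-\ell,s-\ell,v}$, every prescribed family $\{f_{0S}\otimes f_{1S}\}_S$ is realized, so every $F$ of the stated form lies in the image.

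The main obstacle is the identity for $\mathcal L_{s(\chi_\Omega)}(W_0,W_1/\Im\Omega_\ell)$: one must simultaneously track the block decomposition \eqref{defW1W0} of $s(\chi_\Omega)$ acting on $W_0\leq \Ker\Omega_\ell$ and on the quotient $W_1/\Im\Omega_\ell$, and verify that precisely the nesting $W_0\leq\Ker\Omega_\ell$, $\Im\Omega_\ell\leq W_1$ forces the contribution of the off-diagonal part $T_0,T_1$ to vanish, so that the action reduces to $\mathcal L_{s(\chi_\Omega)}$ and the formula takes the clean tensor-product form.
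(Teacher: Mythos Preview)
Your proof is correct and follows essentially the same route as the paper's: you identify $t(\chi_\Omega)=t_\Omega$ via \eqref{actchiOmega}, compute $\mathcal L_{s(\chi_\Omega)}(W_0,W_1/\Im\Omega_\ell)=(t_\Omega^{-1}U_0,(t_\Omega^{-1}U_1)/\Im\Omega_\ell)$ exactly as the paper does using \eqref{Lsy}, and then transport by $t_\Omega$ to land in $\mathcal V_{m-\ell,r,u}(\Ker\Omega)\otimes\mathcal V_{n-m-\ell,s-\ell,v}(V_1/\Im\Omega)$. Your closing remark about the ``main obstacle'' is slightly overstated: since $W_0\leq\Ker\Omega_\ell$, any $w_0\in W_0$ has zero $Z_0$-component, so $T_0$ never enters; likewise $g_1w_1$ and $g_{11}z_1$ agree modulo $\Im\Omega_\ell$ because $g_{10}u_1+T_1g_{11}z_1\in\Im\Omega_\ell$, which is immediate from \eqref{squarp3tris} without further case analysis.
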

\begin{proof}
First of all, note that $t_S^{-1}\left(f_{0S}\otimes f_{1S}\right)$ corresponds to $T_{\theta,j}u_t$ in \eqref{descsphharm}. 
Indeed, if $(Y_0,Y_1)\in Y$ and $D_{(Y_0,Y_1)}$ contains $\Omega_\ell$, then
\[
\left[t_S^{-1}\left(f_{0S}\otimes f_{1S}\right)\right](Y_0,Y_1/\Im\Omega_\ell)=f_{0S}(t_SY_0)f_{1S}\left((t_SY_1)/\Im S\right),
\]
that is, $t_S^{-1}\left(f_{0S}\otimes f_{1S}\right)\in\mathcal{V}_{m-\ell,r,u}(\Ker \Omega_\ell)\otimes\mathcal{V}_{n-m-\ell,s-\ell,v}(V_1/\Im\Omega_\ell)$, as these spaces are defined just by means of the inclusion relation and the dimension. Moreover, the decompositions \eqref{decqHahn} and \eqref{LGGdec} are given by describing directly the image of the intertwining operators, that is, in the present setting, 
$T_{\theta,j}u_t$ is just an arbitrary element in $\mathcal{V}_{m-\ell,r,u}(\Ker \Omega_\ell)\otimes\mathcal{V}_{n-m-\ell,s-\ell,v}(V_1/\Im\Omega_l)$.

If $\Omega\in D_{(U_0,U_1),\ell}$, then $\Omega\in\Hom_l(V_0,V_1)$ and $t(\chi)$ in \eqref{hchith1} is just $t_\Omega$; cf.\ \eqref{actchiOmega}.
Since 
\[
t_\Omega^{-1}U_0\leq t_\Omega^{-1}\Ker\Omega=\Ker\Omega_\ell \quad\text{ and }\quad t_\Omega^{-1}U_1\geq t_\Omega^{-1}\Im\Omega=\Im\Omega_\ell,
\] 
from \eqref{Lsy} it follows that, in the present setting, the element $s(\chi)$ in \eqref{hchith1} is given by $s_{(t_\Omega^{-1}U_0,t_\Omega^{-1}U_1)}$.
Then, omitting the normalization factor $\frac{1}{\sqrt{\lvert B\rvert}}$, \eqref{descsphharm} yields the value at the point $(U_0,U_1,\Psi)\in X$ of the spherical harmonic corresponding to the choice in \eqref{choicefS}: 
\[
\begin{split}
&\sum_{\Omega\in D_{(U_0,U_1)}^\ell}\chi_\Omega(\Psi)\left[t_S^{-1}\left(f_{0\Omega}\otimes f_{1\Omega}\right)\right]\left(\mathcal{L}_{s_{(t_\Omega^{-1}U_0,t_\Omega^{-1}U_1)}}(W_0,W_1/\Im\Omega_\ell)\right)\\
(\text{by }\eqref{Lsy})\quad&\qquad=\sum_{\Omega\in D_{(U_0,U_1),\ell}}\chi_\Omega(\Psi)\left[t_\Omega^{-1}\left(f_{0\Omega}\otimes f_{1\Omega}\right)\right]\left(t_\Omega^{-1}U_0,(t_\Omega^{-1}U_1)/\Im\Omega_l\right)\\
&\qquad=\sum_{\Omega\in D_{(U_0,U_1),\ell}}\chi_\Omega(\Psi)f_{0\Omega}\left(U_0\right) f_{1\Omega}\left(U_1/\Im\Omega\right).
\end{split}
\]
\end{proof}

\begin{remark}
{\rm
The space in Theorem \ref{PropSphharm} corresponds to $V_{uv\ell}$ in \cite[Definition 4.14]{DUNKL}, but Dunkl used a different description. He introduced a basis for $L(\mathcal{G}_n^q)\equiv L(\mathcal{H}_n^q)$ (cf. Proposition \ref{PropGH}) in the following way. For every triple $(Y_0,Y_1,\Omega)$ with $Y_0\leq V_0, Y_1\leq V_1$, and $\Omega\in \Hom(V_0,V_1)$ such that $Y_0\leq \Ker \Omega$ and $\Im\Omega\leq Y_1$ (in our notation, $\Omega\in D_{(Y_0,Y_1)}$), he defined a function $(Y_0,Y_1,\Omega)^\diamondsuit\colon\mathcal{H}_n^q\rightarrow\mathbb{C}$ by setting (cf.\ \eqref{defchiOmega})
\[
\left[(Y_0,Y_1,\Omega)^\diamondsuit\right](U_0,U_1,\Psi) \coloneqq \begin{cases}
\tau\left(\Tr\left[\Omega_{U_0,U_1}\Psi\right]\right)&\text{ if }Y_0\leq U_0\leq \Ker\Omega\;\text{ and }\;Y_1\leq U_1,\\
0&\text{ otherwise}.
\end{cases}
\]
Then, on this basis, he defined suitable intertwining operator $D_0$ and $D_1$, similar to $d$ in \eqref{decqHahnbis}, with $D_0$ acting on the $V_0$-subspaces, and $D_1$ on the $V_1$-subspaces. Finally, $V_{uv \ell}$ is described as the intersections of the kernels of $D_0$ and 
$D_1$. We directly refer to Dunkl's paper for more details.}
\end{remark}

\section{Open problems}
In this final section, we collect some open problems related to a possible further generalization of our main construction.

First recall that given a short exact sequence 
\begin{equation}
\label{e:short-es}
1 \rightarrow N \overset{\iota}{\rightarrow} G \overset{\pi}{\rightarrow} H \rightarrow 1
\end{equation}
of (finite) groups, one refers to the group $G = G(\iota,\pi)$ as to the {\em extension} of $H$ by $N$ (by means of the
monomorphism $\iota$ and the epimorphism $\pi$). 
The representation theory of $G$ may be constructed by means of the representation theories of $N$ and $H$, the action of $H$ on the dual group $\widehat{N}$, and a generalization of the \emph{little group method} of Mackey and Wigner (cf.\ \cite{CST3,CST4, book3, book4,book6}) 
by means of the so-called \emph{projective representations} (see \cite[Theorem 7.23]{book6}).

Recall that a short exact sequence \eqref{e:short-es} is termed \emph{split} if there exists a \emph{section} $t \colon H \to G$
(that is, $\pi \circ t = \Id_H$) which is a group homomorphism. Equivalently, \eqref{e:short-es} is split if and only if 
$G = H \ltimes N$ is the semidirect product of $N$ by $H$.

\begin{definition}
\label{def:sub-extension}
{\rm Let
\begin{equation}
\label{e:short-es-bis}
1 \rightarrow N_j \overset{\iota_j}{\rightarrow} G_j \overset{\pi_j}{\rightarrow} H_j \rightarrow 1
\end{equation}
$j=1,2$, be two short exact sequences equipped with monomorphisms $\iota_{N_2}^{N_1} \colon N_2 \to N_1$, 
$\iota_{G_2}^{G_1} \colon G_2 \to G_1$, and $\iota_{H_2}^{H_1} \colon H_2 \to H_1$ such that the diagram

\qquad\qquad\qquad\qquad\qquad\qquad\qquad\xymatrix{
N_1 \ar[r]^{\iota_1} & G_1 \ar[r]^{\pi_1} & H_1\\
N_2 \ar[u]_{\iota_{N_2}^{N_1}}\ar[r]^{\iota_2} & G_2 \ar[u]_{\iota_{G_2}^{G_1}}\ar[r]^{\pi_2} & H_2 \ar[u]_{\iota_{H_2}^{H_1}}\\
}

\noindent
is commutative. One says that $G_2 = G(\iota_2,\pi_2)$ is a \emph{sub-extension} of $G_1 = G(\iota_1,\pi_1)$.}
\end{definition}

Let $A$ be an abelian group, let $H$ be a group acting on $A$, let $K$ a subgroup of $H$,
and let $C$ be a subgroup of $A$ which is $K$-invariant. Let $G_1 \coloneqq H \ltimes A$ and
$G_2 \coloneqq K \ltimes C$ denote the corresponding \underline{split} extensions. 
Note that, via the inclusion maps, $G_2$ is a sub-extension of $G_1$. 
In Theorem \ref{Theodec} we determined the decomposition of the associated permutation representation $L(G_1/G_2)$
into irreducible representations and in Corollary \ref{Cormultfree} we gave necessary and sufficient condition for 
$(G_1, G_2)$ being a Gelfand pair.
Moreover, if the latter is the case, in Section 6 we computed the corresponding spherical functions
(cf.\ Theorem \ref{Teospherfunct}).

\begin{quote}
\begin{openproblem}
{\rm
Let \eqref{e:short-es-bis}, $j=1,2$, be two short exact sequences such that $G_2$ is a sub-extension of $G_1$. 
Determine the decomposition of the associated permutation representation $L(G_1/G_2)$ into irreducible representations.
Determine general necessary and sufficient conditions for $(G_1, G_2)$ being a Gelfand pair and,
if this is the case, compute the corresponding spherical functions. 
In \cite[Theorem 7.23]{book6} we presented a list of all irreducible representation of $G_2$ (by means of the
representation theories of $N_2$ and $H_2$, of the action of $H_2$ on the dual group $\widehat{N_2}$, and the generalization of the 
little group method in terms of projective representations we alluded to above). Let then $\sigma\in\widehat{G_2}$ (expressed via
the above list) and determine the decomposition of its induction $\Ind_{G_2}^{G_1}\sigma$. Also establish necessary and sufficient conditions
for this decomposition being multiplicity-free (in our monograph \cite{book5}, we have developed a theory of multiplicity-free induced representations, or Gelfand triples, and studied their associated spherical functions). 
This last question is also relevant in the split case (this would constitute a generalization of Theorem \ref{Theodec}).\\
}
\end{openproblem}
\end{quote}

The automorphism group of a finite abelian $p$-group is described in \cite[Section 1.7]{book4}. 

\begin{quote}
\begin{openproblem}
{\rm
Let $A$ be a finite abelian $p$-group. 
Determine whether of not it is possible to construct an ``abelian'' analog of the $q$-Johnson scheme by considering the action of $\Aut(A)$, the automorphism group of $A$, on the subgroups of $A$.
} 
\end{openproblem}
\end{quote}

\begin{quote}
\begin{openproblem}
{\rm
Let $A$ and $B$ be two finite abelian $p$-groups. 
Consider the natural action of $\Aut(A)\times\Aut(B)$ on $\Hom(A,B)$ given by setting  $(h,g)\varphi\coloneqq h\circ\varphi\circ g^{-1}$ for all $h\in\Aut(A)$, $g\in\Aut(B)$, and $\varphi\in\Hom(A,B)$. 
Determine whether of not it is possible to construct an ``abelian'' analog of the $q$-Hamming scheme described in \cite[Section 8.7]{book} by considering the natural action of 
$\left(\Aut(A)\times\Aut(B)\right)\ltimes\Hom(A,B)$ on the pairs $(\psi,C)$ such that $C\leq A$ and $\psi\in\Hom(C,B)$ given by
\[
(h,g,\varphi)(\psi,C)\coloneqq \bigl((h\circ\varphi\circ g^{-1})\rvert_{hC},hC\bigr).
\]
} 
\end{openproblem}
\end{quote}

\begin{quote}
\begin{openproblem}
{\rm
Determine whether of not the Gelfand pair in Corollary \ref{lastCor} has an ``abelian'' analog obtained by replacing vector spaces 
(resp.\ the general linear group (and its subgroups)) by abelian $p$-groups (resp.\ by the automorphism group (and its subgroups)
of these abelian $p$-groups).
} 
\end{openproblem}
\end{quote}

\end{document}